\theoremstyle{plain}
\newtheorem{theorem}{Theorem}[section]
\newtheorem{lemma}[theorem]{Lemma}
\newtheorem{proposition}[theorem]{Proposition}
\newtheorem{corollary}[theorem]{Corollary}
\theoremstyle{definition}
\newtheorem{definition}[theorem]{Definition}
\theoremstyle{remark}
\newtheorem{remark}[theorem]{Remark}
\newcommand{\beq}{\begin{equation}}
\newcommand{\eeq}{\end{equation}}
\newcommand{\bpm}{\begin{pmatrix}}
\newcommand{\epm}{\end{pmatrix}}
\DeclareMathOperator{\tr}{trace}
\DeclareMathOperator{\re}{Re}
\DeclareMathOperator{\im}{Im}
\renewcommand{\Re}{\re}
\renewcommand{\Im}{\im}
\renewcommand{\vec}[1]{\ensuremath{\mathbf{#1}}}
\newcommand{\mat}[1]{\ensuremath{\mathsf{#1}}}
\newcommand{\norm}[1]{\ensuremath{\|#1\|}}
\newcommand{\mi}{\ensuremath{\mathrm{i}}}
\newcommand{\me}{\ensuremath{\mathrm{e}}}
\newcommand{\dif}{\ensuremath{\mathrm{d}}}
\newcommand{\spm}{{\ensuremath{{\scriptscriptstyle\pm}}}}
\renewcommand{\sp}{{\ensuremath{{\scriptscriptstyle +}}}}
\newcommand{\sm}{{\ensuremath{{\scriptscriptstyle -}}}}
\def\bb1{{1\!\!1}}
\renewcommand{\epsilon}{\varepsilon}
\numberwithin{equation}{section}
\date{Last Updated:  \today}
\title[Stability of Navier--Stokes shocks]{Multidimensional stability of large-amplitude Navier--Stokes shocks}
\author[Humpherys, Lyng, and Zumbrun]{Jeffrey Humpherys, Gregory Lyng, and Kevin Zumbrun}
\thanks{J.H. was partially supported by NSF grant DMS-CAREER-0847074. 
G.L. was partially supported by NSF grant DMS-CAREER-0845127.
K.Z. was partially supported  by NSF grants DMS-0300487 and DMS-0801745.}
\address{Department of Mathematics, Brigham Young University, Provo, UT 84602}
\email{jeffh@math.byu.edu}
\address{Department of Mathematics, University of Wyoming, Laramie, WY 82071}
\email{glyng@uwyo.edu}
\address{Department of Mathematics, Indiana University, Bloomington, IN 47402}
\email{kzumbrun@indiana.edu}
\begin{document}
\begin{abstract}
Extending results of Humpherys-Lyng-Zumbrun in the one-dimensional case, we use a combination of asymptotic ODE estimates and numerical Evans-function computations to examine the multidimensional stability of planar Navier--Stokes shocks across the full range of shock amplitudes, including the infinite-amplitude limit, for monatomic or diatomic ideal gas equations of state and viscosity and heat conduction coefficients $\mu$, $\mu +\eta$, and $\nu=\kappa/c_v$ in the physical ratios predicted by statistical mechanics, with Mach number $M>1.035$.
Our results indicate unconditional stability within the parameter range considered; this agrees with the results of Erpenbeck and Majda for the corresponding inviscid case of Euler shocks. 
Notably, this study includes the first successful numerical computation of an Evans function associated with the multidimensional stability of a viscous shock wave. The methods introduced may be used in principle to decide stability for any $\gamma$-law gas, $\gamma>1$, and arbitrary $\mu>|\eta|\ge 0$, $\nu>0$ or, indeed, for shocks of much more general models and equations, including in particular viscoelasticity, combustion, and magnetohydrodynamics (MHD).
\end{abstract}
\maketitle

\tableofcontents


\section{Introduction}\label{sec:intro}

In this paper---extending to multiple space dimensions results of Humpherys-Lafitte-Zumbrun \cite{HLZ} and Humpherys-Lyng-Zumbrun \cite{HLyZ} in the one-dimensional case---we address by a combination of numerical and analytical methods the long-standing question of stability of Navier--Stokes shock waves for a polytropic equation of state with shock amplitude not necessarily small.  Remarkably, by a rescaling/compactification argument as in \cite{HLyZ}, we are able to obtain uniform estimates independent of shock amplitude, including the infinite-amplitude limit, obtaining convincing numerical evidence that for a monatomic or diatomic ideal gas equation of state and viscosity and heat conduction coefficients in the physical ratios predicted by statistical mechanics, \emph{Navier--Stokes shocks are multidimensionally stable, independent of amplitude or other parameters}.

This confirms the predictions of the inviscid Euler case, for which the corresponding result was established by Erpenbeck and Majda \cite{Er,Maj4}.  The treatment of the multidimensional case involves substantial new technical difficulties, both numerical and analytical, beyond that of the one-dimensional case.  Indeed, this study includes the first successful numerical Evans-function computations for multidimensional stability of a viscous shock wave of any type.


\subsection{Background and description of main results}
\label{ssec:back}
Stability is a fundamental issue in the study of shock waves.  Indeed, one of their defining features is the ability to transmit large amounts of energy over large distances in the form of a coherent wave front.  The mathematical study of shock stability, connected with properties of compressibility, initially lagged behind the study of other forms of hydrodynamic stability connected with incompressible flow, partly perhaps due to complexity of the problem and partly perhaps because shocks under most circumstances are extremely stable \cite{BE}.  However, since its rigorous formulation by Erpenbeck \cite{Er} and others in the 1960's, the problem of shock stability has been the object of intensive investigation by a number of different authors.  We mention in particular the ``inviscid'' studies of Erpenbeck, Majda, and Metivier \cite{Er, Maj1, Maj2, Maj3, Me}, largely settling the question in the case where second-order transport effects of viscosity, heat conduction, magnetic resistivity, etc. are neglected, and their striking corollary, {\bf ``Majda's Theorem:}'' {\it for a polytropic ideal gas equation of state, inviscid shock waves are spectrally and nonlinearly stable, independent of shock amplitude,}
in one and multiple space dimensions.\footnote{Shown by Erpenbeck \cite{Er} at the level of spectral stability;
see also the one-dimensional investigations of Lax \cite{La}.}

The above-mentioned inviscid studies concern discontinuous  solutions of systems of first-order quasilinear hyperbolic conservation laws, which are constant to either side of the shock front. When transport effects are included, the structure becomes more complicated, with the shock discontinuity replaced by a smooth but rapidly varying ``shock layer'' joining asymptotically constant states at infinity.  In most cases, the profile of this shock layer is not explicitly known, but only guaranteed by abstract theory to exist. For treatments of the shock structure problem, see for example the studies of Bethe \cite{Be}  in the inviscid setting, and Weyl \cite{W} and Gilbarg \cite{Gi} in the ``viscous,'' or Navier--Stokes case.\footnote{For further history, going back to work of Stokes, Rayleigh, and Gel'fand, see \cite{W,Gi,BE}.}

The study of stability of viscous shock layers, was initiated at the one-dimensional scalar level by Hopf \cite{Ho} and Il'in-Ole\u inik \cite{IO}.  For one-dimensional systems, it was begun in the 1980's by Kawashima-Matsumura, Kawashima-Matsumua-Nishihara, Liu, and Goodman \cite{KM,KMN,L1,Go1,Go2}, and essentially concluded in \cite{SX,L2,GZ,ZH,MZ1,MZ2,MZ3,HuZ1,HLZ,HLyZ,HRZ,RZ}.  We note in particular the proof by Mascia-Zumbrun and Humpherys-Zumbrun \cite{MZ2,HuZ1} for the first time of small-amplitude (one-dimensional) stability of ordinary gas-dynamical and Lax-type magnetohydrodynamic Navier--Stokes shocks with general equation of state, and the proof by Mascia-Zumbrun and Raoofi-Zumbrun \cite{MZ3,RZ} of nonlinear (one-dimensional) stability of large-amplitude shock solutions of arbitrary type for a class of systems generalizing the Kawashima class \cite{K,KSh}, including gas dynamics, viscoelasticity, and magnetohydrodynamics (MHD), assuming a numerically verifiable \emph{Evans-function condition} encoding spectral stability in an appropriate sense; that is, the Evans-function condition accounts for the lack of spectral gap/accumulating essential spectrum at the origin that is an fundamental feature of the shock stability problem.\footnote{For a discussion of the Evans function and ``effective spectra,'' see \cite{ZH,MZ1}.}  Finally, we note the analytical/numerical studies in \cite{HLZ} and \cite{HLyZ}, Humpherys-Lafitte-Zumbrun and Humpherys-Lyng-Zumbrun, respectively, {\it verifying spectral and nonlinear stability of arbitrary amplitude polytropic ideal gas shock layers for the isentropic and non-isentropic Navier--Stokes equations} with gas constant $\gamma \in [1.2,3]$.

The study of multidimensional viscous stability for scalar equations was initiated by Goodman \cite{Go3} for small-amplitude shocks,  and extended to large amplitudes by Goodman-Miller and Hoff-Zumbrun \cite{GoM,HoZ1,HoZ2}.  The study of multidimensional stability for systems, requiring substantially new ideas beyond those of the scalar case, was initiated by Zumbrun-Serre and Zumbrun in \cite{ZS,Z2} with various important extensions carried out by Freist\"uhler-Szmolyan, Zumbrun, and Nguyen, respectively, in \cite{FS2,Z3,Z5,N}.
The current state of the art as regards multidimensional stability for viscous shock layers of the Navier-Stokes equations and related physical systems is that generalized spectral stability, as represented by an appropriate \emph{Evans-function condition}, is sufficient (and with further elaboration, essentially necessary as well \cite{Z6}) for nonlinear asymptotic stability with explicit time-algebraic rates of decay in $L^p$, $p\geq 2$; see \cite{Z3,N}.  For analogous results on the associated inviscid limit problem, see \cite{GMWZ}.

At an abstract level, this is completely parallel to the inviscid theory, where nonlinear stability reduces to a generalized spectral stability condition phrased in terms of a {\it Lopatinski determinant} analogous to the Evans function.  Indeed, the Lopatinski condition of inviscid theory may be shown to be the low-frequency limit of the Evans function of the viscous theory \cite{ZS,Z2}.  However, unlike the Lopatinski determinant, which, being associated with a constant-coefficient problem, is explicitly computable in terms of the endstates of the shock, the Evans function, determined by solution of a variable-coefficient system of ordinary differential equations (ODEs), is almost never explicitly computable, except in certain asymptotic limits.  {In fact, up to the time of this writing, the multidimensional Evans condition had not been verified to our knowledge either analytically or numerically for any single viscous shock wave.}

The question we consider here is whether Majda's Theorem---evidently a condition on low-frequency stability---extends to a corresponding result of full multidimensional viscous stability.  Our main result is to show, by numerically well-conditioned and analytically justified computations that in fact an analogous result does hold for the viscous case, at least for a $\gamma$-law gas with $\gamma=5/3$ or $\gamma=7/5$, corresponding to a monatomic or diatomic gas, and Mach number $M\geq 1.035$ not too close to the small-amplitude limit $M=1.0$.

In passing, we develop a number of new techniques to deal with difficulties special to the multidimensional case, in particular: $(i)$ a flexible yet sharp ``high-frequency tracking'' algorithm allowing us to obtain reasonable bounds on the size of possible unstable frequencies/eigenvalues while maintaining reasonable computational expense; $(ii)$ a ``modified balanced flux'' coordinatization of the Evans function analogous to the integrated coordinates of one-dimensional theory, namely, factoring out zeros at the origin while maintaining analyticity in the temporal frequency $\lambda$; and $(iii)$ a ``pseudo-Lagrangian'' change of  independent variable factoring out excessive variation arising in the Eulerian coordinates natural for the multidimensional problem.  These appear to be necessary even to carry out successful Evans-function computations for a single wave of moderate amplitude; see \cite{BHLyZ1,BHLyZ2} for further discussion.


\subsection{Discusion and open problems}
\label{ssec:discussion}
Our results extend and complete the earlier one-dimensional investigations of \cite{HLyZ}.  A remarkable feature of both analyses is that we are able to treat simultaneously all possible shock amplitudes, making use of invariance of the  polytropic Navier--Stokes equations, by a {\it rescaling/compactification argument.}
Specifically, rescaling so that the both the density at the left endstate $\rho_\sm$ and the shock speed $s$ in Lagrangian coordinates (resp., mass flux $m$ in Eulerian coordinates) be one, we find that all gas-dynamical quantities in the profile and eigenvalue ODEs remain bounded and bounded away from zero, save for internal energy at the left endstate $e_\sm$, which approaches the nonphysical value zero in the infinite-(non-rescaled) amplitude limit.

Noting that all coefficients remain smooth (indeed, analytic) for $e=0$, and checking by special structure of the equations that stable/unstable subspaces of the limiting eigenvalue ODEs at spatial infinity have continuous limits as $e_\sm\to 0$, so that we can make sense of the notion of extended eigenvalue there in a way that perturbs smoothly, we are able to adjoin the limiting value $e_\sm=0$ corresponding to infinite shock amplitude to the computational domain, thus obtaining a compact parameter space on which we can carry out uniformly well-conditioned computations.

A substantial difficulty, however, is that for $e=0$ the first-order hyperbolic (Euler) part of the Navier--Stokes equations is no longer symmetrizable.  As a result, the standard ``Kawashima-type'' hyperbolic--parabolic energy estimates that have been used in \cite{Z5} and elsewhere to bound the size of possible unstable frequencies/eigenvalues no longer apply, and we must appeal to more special and/or complicated methods to bound uniformly the range of frequencies under consideration.  Here, we follow a ``tracking'' argument similar to that used in \cite{HLyZ}, based on invariant cones and Ricatti-type energy estimates generalizing those of \cite{GZ,ZH} in the strictly parabolic case,\footnote{ See also \cite{GMWZ2} for links to a type of Kreiss symmetrizer estimate introduced in \cite{MeZ2}.} in which the equations are conjugated by a series of transformations to an equivalent partially symmetrized form in which stable and unstable subspaces have not only a spectral gap but a gap in numerical range.

A new difficulty in the multidimensional case is that there are multiple scaling regimes for the eigenstructure of the asymptotic eigenvalue problem, and so these transformations do not lend themselves to the type of series expansion that was carried out in \cite{HLyZ}, nor is the symmetrization step readily carried out by hand.  We replace these steps therefore with a more general approach in which these steps are replaced by numerical computations; the resulting algorithm, described in Appendix \ref{tracking}, both clarifies and greatly extends that of \cite{HLyZ}.

A further difficulty of the multidimensional case is the unavailability of the ``integrated'' coordinates of the one-dimensional case, which have the desirable property of factoring out eigenvalues at the origin while maintaining analyticity of the eigenvalue ODE (hence also the associated Evans function) with respect to the spectral parameter $\lambda$.  As noted earlier, we overcome this by the introduction of a new ``modified balanced flux'' coordinatization that shares these properties.

A final new aspect is practical unavailability of Lagrangian coordinates in the multidimensional stability problem.  Though Lagrangian coordinates may be introduced, at the expense of carrying $d^2-1$ additional variables, where $d$ the spatial dimension, corresponding to the $d^2$ entries of the stress tensor, they introduce a massive ambiguity in the equations due to the infinite-dimensional family of invariances corresponding to volume-preserving maps of the spatial variable; see \cite{PYZ} for further discussion.  This results in an infinite-dimensional family of spurious pure imaginary essential spectra complicating both the stability problem and potential construction of an Evans function.  On the other hand, it turns out that the usual construction of the Evans function, in Eulerian coordinates, has asymptotic behavior yielding unacceptably large variation at high frequencies.  Indeed, we find it practically incomputable, except at small frequencies.  We remedy this by a change of the independent spatial variable $x_1$ to its corresponding value in the one-dimensional Lagrangian representation.  Remarkably, by this simple ``pseudo-Lagrangian'' coordinatization we recover the favorable asymptotics of the one-dimensional Lagrangian case, and practically feasible
computations \cite{BHLyZ1}.

These innovations not only make possible the uniform treatment of large-amplitude Navier--Stokes shocks, but also for the first time practical multidimensional stability computations in a variety of different settings.  Two particularly interesting directions for further investigation are viscous MHD shocks and detonations, for both of which instabilities are known to occur, with interesting associated bifurcations, and for which the effects of viscosity are as yet unclear \cite{Mo,PYZ,TZ2,TZ3,SS,Z6,BHLyZ3}.  A related direction for study is on the possible relation between types of instabilities and an associated convex entropy; see \cite{BFZ,LV,TZ4} for related one-dimensional investigations.  The study of large-amplitude stability for temperature-dependent transport coefficients, or kinetic models, are further interesting directions.

Another interesting open problem is stability of Navier--Stokes shocks in the {\it small-amplitude limit}: this ``characteristic'' limit is not accessible to direct numerical investigation, but should be possible (both analytically and numerically) by the singular perturbation/asymptotic ODE techniques used here and in \cite{FS1,FS2,PZ,HLZ,HLyZ,BHZ1}.  For a treatment of the multidimensional problem in the artificial (Laplacian) viscosity case, see \cite{FS2}.  See \cite{PZ} for a treatment of the partially parabolic and related relaxation case in one dimension.

Finally, we emphasize that, though our computations here are numerically well-conditioned and based on rigorous theory, they constitute convincing numerical evidence and not rigorous numerical proof.  To convert the present analysis to numerical proof based on interval arithmetic is an interesting open problem, involving separate and interesting issues of feasible computation, and, given the fundamental nature of the problem, certainly seems warranted. The successful work of Barker \cite{B} in a related context serves as a proof of concept in this regard.


\subsection{Plan of the Paper}
\label{ssec:plan}

In Sections \ref{sec:equations} and \ref{sec:viscous}, we present the Navier--Stokes and shock profile equations, then, in Subsections \ref{rescaled}, \ref{rhcond}, and \ref{hypcheck}, our choice of rescaled coordinates and uniform existence/decay of shock profiles.  In Sections \ref{lineig} and \ref{evansform} we derive the linearized eigenvalue ODE, and carry out the construction of the Evans function in balanced flux coordinates.

In Section \ref{sec:hfb}, we carry out the key high-frequency tracking analysis uniformly bounding the frequency domain under consideration.  In Section \ref{numerical}, we carry out a low-frequency Evans function analysis using the balanced flux formulation, establishing stability for frequencies sufficiently near zero.  Finally, in Section \ref{sec:intermediate}, we carry out a large-scale intermediate frequency Evans function analysis using a combination of standard and modified balanced flux formulations together with pseudo-Lagrangian coordinates, verifying stability on the remaining frequency regimes.  We summarize the argument in Section \ref{sec:summary}, concluding that Navier--Stokes shocks are stable for monatomic and diatomic ideal gas equations of state and Mach number $M>1.035$.  Various useful tools and auxiliary materials are collected in the appendices.

\medskip{\bf Acknowledgment.}  We thank Blake Barker for his generous help, both in checking independently intermediate-frequency computations and in numerous useful discussions.


\section{Compressible Navier--Stokes equations}
\label{sec:equations}

Before starting, we make the standard observation \cite{Er,Maj1} that, by rotational invariance of the Navier--Stokes equations, together with the assumed rotational symmetry (since constant) of a planar shock wave in transverse directions (i.e., transverse to the direction of propagation), it is sufficient in the study of spectral stability to restrict to the case of dimension two.  Specifically, taking the Fourier transform in transverse directions, and denoting the resulting frequency as $\xi$, one finds that the eigenvalue equations depend only on $|\xi|$, hence reduce to the two-dimensional case. Thus, there is no loss of generality in restricting to two dimensions, as we hereafter do. In Eulerian coordinates, the Navier--Stokes equations for compressible gas dynamics in two spatial dimensions can be written in the form
\begin{subequations}\label{eq:ns}
\begin{equation}
\rho_t+ (\rho u)_{x_1} +(\rho v)_{x_2}=0,\label{eq:mass}
\end{equation}
\begin{equation}
(\rho u)_t+ (\rho u^2+p)_{x_1} + (\rho uv)_{x_2}=(2\mu +\eta) u_{x_1x_1}+ \mu u_{x_2x_2} +(\mu+ \eta )v_{x_1x_2},\label{eq:momentumx}
\end{equation}
\begin{equation}
(\rho v)_t+ (\rho uv)_{x_1} + (\rho v^2+p)_{x_2}=\mu v_{x_1x_1}+ (2\mu +\eta) v_{x_2x_2} + (\mu+\eta) u_{x_2x_1},\label{eq:momentumy}
\end{equation}
\begin{multline}
\label{eq:energy}
(\rho E)_t+ (\rho uE+up)_{x_1} + (\rho vE+vp)_{x_2}\\
=\Big( \kappa T_{x_1} + (2\mu+\eta)uu_{x_1} + \mu v(v_{x_1}+u_{x_2}) + \eta uv_{x_2}\Big)_{x_1} \\
+\Big( \kappa T_{x_2}+ (2\mu+\eta)vv_{x_2} + \mu u(v_{x_1}+u_{x_2}) + \eta vu_{x_1}\Big)_{x_2}, 
\end{multline}
\end{subequations}
where $\rho$ is density, $u$ and $v$ are the fluid velocities in $x_1$ and $x_2$
directions respectively, $p$ is pressure, and the specific energy $E$ is made up of the specific internal  
energy $e$ and kinetic energy:
\begin{equation}
E=e+\frac{u^2}{2} +\frac{v^2}{2}.\label{eq:internal_kinetic}
\end{equation}
The constants $\mu>|\eta|\ge0$ and $\kappa>0$ are coefficients of first (``dynamic'') and second viscosity and heat conductivity.  Finally, $T$ is the temperature,  and we assume that the specific internal energy $e$ and the pressure $p$ are known functions of density and temperature:
\begin{equation}
\label{eq:general_eos}
p=p_0(\rho,T),\quad e=e_0(\rho,T).
\end{equation}
\begin{remark}
In our use of \eqref{eq:general_eos} to close the system \eqref{eq:ns}, we are assuming, following the terminology of Menikoff and Plohr \cite{MP} (see also the discussion in \cite{B-GS}), that the fluid is endowed with a \emph{complete equation of state}. In their framework, the specific internal energy is given everywhere as a function of the specific volume $\tau=1/\rho$ (we use $\tau$ to avoid a notational conflict with $v$ which we are using to denote velocity) and the specific entropy $S$:
\[
e=e(\tau,S).
\]
Then, the pressure $p$ and the temperature $T$ are obtained from the state equation via 
\[
p=-\left.\frac{\partial e}{\partial \tau}\right|_S,\quad T=\left.\frac{\partial e}{\partial S}\right|_\tau.
\]
Other important quantities in our analysis are the specific heats at constant volume and pressure (these are measurable)
\[
c_v=T\left.\frac{\partial S}{\partial T}\right|_\tau,\quad c_p=T\left.\frac{\partial S}{\partial T}\right|_p,
\]
and the (dimensionless) adiabatic exponent and Gruneisen coefficient
\[
\gamma=\frac{\tau}{p}\left.\frac{\partial^2 e}{\partial\tau^2}\right|_S,
\quad
\Gamma=-\frac{\tau}{T}\frac{\partial^2e}{\partial S\partial\tau}.
\]
\end{remark}

An important special case occurs when we consider a polytropic ideal gas.  In this case the energy and pressure functions take the specific form
\begin{equation}
\label{eq:ideal_gas}
p_0(\rho,T)=R\rho T,\quad e_0(\rho,T)=c_vT,
\end{equation}
where the gas constant, $R> 0$, and the specific heat at constant volume, $c_v>0$, are constants that depend on the gas \cite{Sm}.

Alternatively, we may use \eqref{eq:ideal_gas} to write the pressure in terms of the density and the specific internal energy as  
\begin{equation}
\label{eq:Gamma}
p=\Gamma \rho e,
\end{equation}
where the \emph{Gruneisen coefficient} $\Gamma$ is given by $\Gamma:= \frac{R}{c_v }=\gamma -1> 0$ and $\gamma =c_p/c_v \ge 1$ is the \emph{adiabatic exponent}, which, in the polytropic setting, is the ratio of specific heat at constant pressure $c_p$ to specific heat at constant volume $c_v$.  Equivalently, $p(\rho,S)=a\me^{S/c_v}\rho^{\gamma},\;a=\text{constant}$, where $S$ is thermodynamical entropy, or $p(\rho)=a\rho^{\gamma}$ in the isentropic approximation; see \cite{Sm,BHRZ,HLZ}.

\begin{remark}
In the literature, a gas satisfying the pressure law
\[
p=(\gamma-1)\rho e
\]
is commonly referred to as a ``$\gamma$-law'' gas. The main calculations of this paper will be in the setting of polytropic ideal gas. That is, we use \eqref{eq:ideal_gas} to close the system \eqref{eq:ns}.
\end{remark}

In the thermodynamical rarified gas approximation, $\gamma>1$ is the average over constituent particles of $\gamma=(N+2)/N$, where $N$ is the number of internal degrees of freedom of an individual particle, or, for molecules with ``tree'' (as opposed to ring, or other more complicated) structure,
\begin{equation}
\label{gammaformula}
\gamma=\frac{2n+3}{2n + 1},
\end{equation}
where $n$ is the number of constituent atoms \cite{Ba}: $\gamma= 5/3$ for monatomic, $\gamma= 7/5$ for  diatomic gas.  For dense fluids, $\gamma$ is typically determined phenomenologically \cite{H}. In general, $\gamma$ is usually taken within $1 \leq \gamma \leq 3$ in models of gas- or fluid-dynamical flow, whether phenomenological or derived by statistical mechanics \cite{Sm,Se1,Se2}.

The dynamic viscosity $\mu$ is the constant of proportionality asserted in Newton's law of viscosity between shear stress and velocity gradient of a shear flow.  This is readily measured, and is the value usually reported physical tables.  For an ideal gas, values of the second viscosity $\eta$ are less clear \cite{Ro}.  For incompressible flows, a common assumption is $\eta=0$.  For compressible flows, a common assumption is
\begin{equation}
\label{firstsecond}
\eta=-(2/3)\mu,
\end{equation}
which amounts to the assumption that pressure is equal to ``mean pressure'' defined as one-third the trace of the fluid-dynamical stress tensor,
and which seems to agree well with experiment at least for monatomic and diatomic gases; in particular, $\eta$ is typically negative \cite{Ba, Ro}.

Heat conductivity $\kappa$ is related to $\mu$ by the  dimensionless ratio 
\begin{equation}
{\rm Pr}:=c_p \mu/\kappa,
\end{equation}
or {\it Prandtl number}, which is predicted (somewhat less successfully than $\Gamma$, $\gamma$) in the statistical mechanical ideal gas theory by {\it Eucken's formula} \cite{Bro}
\begin{equation}
\label{Euc}
{\rm Pr}=\frac{4\gamma}{9\gamma-5},
\end{equation}
The key parameter $\nu:=\kappa/c_v$ arising in our analysis thus satisfies
\begin{equation}
\label{nu_mu}
\nu/\mu= \gamma/ {\rm Pr},
\end{equation}
with a theoretically predicted value (from \eqref{Euc}) of 
\begin{equation}
\label{simplest}
\nu/\mu= \frac{9\gamma -5}{4}.
\end{equation}
Typical values for dry air at normal (e.g., room) temperatures, expressed in dimensionless constants, are
\[
\gamma\approx 1.4, \quad \nu/\mu=\frac{\kappa}{c_v\mu}\approx 1.96, 
\]
with $\eta/\mu\approx -.666$ according to \eqref{firstsecond}; see Appendix A, \cite{HLyZ} for further discussion.


\section{Viscous shock profiles}\label{sec:viscous}
\subsection{Traveling-wave equation}
A viscous shock profile of \eqref{eq:ns}  
is a traveling-wave solution,
\begin{equation}
(\rho,u,v,T)(x,t)=(\hat \rho, \hat u, \hat v, \hat T)(x_1-st)
\label{eq:tw_ansatz}
\end{equation}
moving with speed $s$ and connecting constant states
$(\rho_\spm,u_\spm,v_\spm, T_\spm)$.
By Galilean invariance, we may take without loss of generality a  
standing
shock profile, $s=0$, as we shall do hereafter, to obtain
the system of standing-wave ODEs (primes denote differentiation with respect to $x_1$)
\begin{subequations}\label{ode}
\begin{align}
(\rho u)'&=0\,,\\
(\rho u^2)' + p'&=(2\mu +\eta) u''\,,\\
(\rho uv)' &=\mu v''\,,\\
(\rho u E)' + (pu)'&=(2\mu+\eta )(uu')'+\kappa T''+\mu(vv')'\,.
\end{align}
\end{subequations}
From the first equation $m:=\rho u \equiv {\rm constant}$, hence
the third equation becomes $mv'=\mu v''$, for which the only bounded
solutions are $v\equiv {\rm constant}$.
Without loss of generality (by Galilean coordinate change if  
necessary),
we take $v\equiv 0$, yielding
\begin{subequations}\label{ode2}
\begin{align}
m u' + p'&=(2\mu+\eta) u''\,,\\
m E' + (pu)'&=(2\mu+\eta)( uu')'+\kappa T''\,.
\end{align}
\end{subequations}
Integrating from $-\infty$ to $x_1$ and rearranging, we obtain
the first-order system
\begin{subequations}\label{odefirst}
\begin{align}
u'& = (2\mu+\eta)^{-1}\Big(m (u-u_\sm) + (p(\rho,T)-p_\sm)\Big)\,,\\
T'& = \kappa^{-1}\Big( m \big(e(\rho, T)-e_\sm\big)
- \frac{m(u-u_\sm)^2}{2} +(u-u_\sm)p_\sm\Big)\,,
\end{align}
\end{subequations}
which may be closed through the relation $\rho=m u^{-1}$.
Specialized to the ideal gas case, equation \eqref{odefirst} becomes
\begin{subequations}\label{idealode}
\begin{align}
u'& = (2\mu+\eta)^{-1}\Big(m (u-u_\sm) + R(\rho T- \rho_\sm T_\sm)\Big)\,,\\
T'&= \kappa^{-1}\Big( m c_v \big(T- T_\sm\big)
- \frac{m(u-u_\sm)^2}{2} +(u-u_\sm)R\rho_\sm T_\sm\Big)\,.
\end{align}
\end{subequations}


\subsection{Rescaled coordinates}
\label{rescaled}

The Navier--Stokes equations are invariant under the rescaling
\begin{equation}\label{scales}
(x_1,x_2,t;\,\rho,u,v,T)\to 
\left(m x_1, mx_2,\epsilon m^2t;\, \epsilon \rho, \frac{u}{\epsilon m}, \frac{v}{\epsilon m},  
\frac{T}{\epsilon^2 m^2}\right),
\end{equation}
where the pressure and internal energy in the (new) rescaled  
variables are given by
\begin{equation}
p(\rho,T)=\epsilon^{-1}m^{-2}p_0(\epsilon^{-1} \rho,\epsilon^2m^2T)
\label{eq:new_pressure}
\end{equation}
and
\begin{equation}
e(\rho,T)=\epsilon^{-2}m^{-2}e_0(\epsilon^{-1} \rho,\epsilon^2m^2 T)\,;
\label{eq:new_energy}
\end{equation}
in the ideal gas case, \emph{the pressure and internal energy laws
remain unchanged}
\begin{equation}
p(\rho,T)=R \rho T,
\qquad
e(\rho,T)=c_vT,
\label{eq:ideal_gas_scaling1}
\end{equation}
with the same constants $R$, $c_v$.  
Likewise, $\Gamma$ remains unchanged in \eqref{eq:Gamma},
for any $\epsilon, m > 0$.

From now on, we restrict to the ideal gas case, and we
choose $\epsilon$ and $m$ so that $\rho_\sm=1$ and $u_-=1$.
(To put things another way, we choose
$m=\hat m:=\hat \rho \hat u$ as the constant of integration for the (non-rescaled)
profile ODE, thus normalizing to $m=1$ in rescaled coordinates.)
Setting $\nu:= \kappa/c_v$, changing to $e$-coordinates, and noting that $m=1$ after rescaling, we find that system \eqref{idealode} becomes, simply,
\begin{subequations}
\label{midealode}
\begin{align}
u'=& \frac{1}{2\mu+\eta}\left( (u-1) + \Gamma \left(\frac{e}{u}- e_\sm\right)\right)\,,\\
e'=& \nu^{-1}\left((e- e_\sm)
- \frac{(u-1)^2}{2} +(u-1)\Gamma e_\sm\right)\,.
\end{align}
\end{subequations}

\subsection{Rankine--Hugoniot conditions}
\label{rhcond}

The Rankine--Hugoniot conditions are
\begin{subequations}
\label{eq:rh}
\begin{align}
[\rho u] &= 0,\label{eq:rh_mass}\\
[\rho u^2]+ [p(\rho, e)]&=0,\label{eq:rh_momentum}\\
\left[\rho u (e+\frac{u^2}{2})\right]+ [p(\rho, e)u]&=0.\label{eq:rh_energy}
\end{align}
\end{subequations}
In \eqref{eq:rh}, the square brackets denote the difference (jump) between end states. That is, if $h$ is some function of $\rho$, $u$ and $e$, then 
\[
[h(\rho, u, e)]=h(\rho_\sp, u_\sp, e_\sp)-h(\rho_\sm, u_\sm, e_\sm).
\]
By the chosen rescaling, $m:=(\rho u)_\spm=1$, $\rho_-=u_-=1$.
Fixing $\Gamma > 0$, and letting $u_\sp$
vary in the physical range $1\ge u_\sp\ge u_*(\Gamma):= \Gamma/(\Gamma+2)$
(we will show below that this is the physical range),
we use \eqref{eq:rh_mass}--\eqref{eq:rh_energy} to solve for
\[
\rho_\sp, e_\sp\,,\;\text{and}\; e_\sm.
\]
Our assumptions reduce \eqref{eq:rh_mass}--\eqref{eq:rh_energy} to $\rho_\sp= u_\sp^{-1}$ and 
\begin{subequations}
\label{eq:new_rh}
\begin{align}
(u_\sp-1)&=-(p_\sp-p_\sm)=-\Gamma\Big(e_\sp \rho_\sp- e_\sm\Big),
\label{eq:new_rh_momentum}\\
(e_\sp-e_\sm)+\Big(\frac{u_\sp^2}{2}- \frac{1}{2}\Big)
&=-(p_\sp u_\sp -p_\sm) = -\Gamma(e_\sp- e_\sm ).\label{eq:new_rh_energy}
\end{align}
\end{subequations}
Subtracting $\frac{u_\sp+1}{2}$ times \eqref{eq:new_rh_momentum} from
\eqref{eq:new_rh_energy} and rearranging, we obtain
\begin{equation}
\label{eq:R}
e_\sp= e_\sm
\frac{1+ \frac{\Gamma}{2}(1-u_\sp)}
{1-\frac{\Gamma}{2u_\sp}(1-u_\sp)}=\frac{ e_\sm u_\sp}{(\Gamma +2)}
\frac{(\Gamma +2 -\Gamma u_\sp))}{(u_\sp-u_*)},
\end{equation}
$u_*:=\frac{\Gamma}{\Gamma+2}$,
from which we obtain the physicality condition
\begin{equation}
\label{phys}
u_\sp> u_*= \frac{\Gamma}{\Gamma + 2},
\end{equation}
corresponding to positivity of the denominator, with
$\frac{e_\sp}{e_\sm}\to +\infty$ as $u\to u_*$.
Finally, substituting into \eqref{eq:new_rh_momentum} and rearranging,
we obtain a complete description of the endstates in terms of $u_\sp$:
\begin{align}
	e_\sp &= \dfrac{u_\sp(\Gamma + 2 - \Gamma u_\sp)}{2\Gamma(\Gamma+1)}\,, \label{e+}\\
	e_\sm &= 
	\dfrac{(\Gamma + 2)u_\sp - \Gamma}{2\Gamma(\Gamma+1)} \,,\label{e-} \\
\rho_\sp&=1/u_\sp\label{rho+}\,.
\end{align}

We see from this analysis that the strong shock limit corresponds,
for fixed $\Gamma$, to the limit $u_\sp\to u_*$, with all other
parameters functions of $u_\sp$.
In this limit,
\begin{equation}
\label{asymptotics1}
\rho_\sm=1, \quad u_\sm=1,  \quad e_\sm \sim (u_\sp-u_*)\to 0,
\end{equation}
and
\begin{equation}
\label{asymptotics2}
\rho_\sp
\to \frac{\Gamma+ 2}{\Gamma}, \quad
u_\sp \to \frac{\Gamma}{\Gamma +2}, \quad
e_\sp \to \frac{1+ \big(\frac{\Gamma}{\Gamma+1}\big)^2}{2(\Gamma +1)}=:e_\mathrm{max}.
\end{equation}
\emph{In particular, for $\gamma$ bounded away from one
(equivalently, $\Gamma$ bounded from zero)
$u$ remains bounded from zero, so that profile equations 
\eqref{midealode} remain smooth with respect to all variables.}\footnote{ This is essentially different from the situation of the isentropic case, which is singular in the strong shock limit \cite{BHRZ,HLZ,BHLRZ}.}


\subsection{Existence and decay of profiles}
\label{hypcheck}

We recall from \cite{HLyZ} the following lemma describing the nature of the shock profile. 

\begin{lemma}[Humpherys et al.\ \cite{HLyZ}]
\label{profdecay}

For $\Gamma$ bounded and bounded away from the nonphysical limit $\Gamma=0$, $\mu, \mu+ \eta, \nu$ bounded and bounded from zero, and $u_\sp$ bounded away from the characteristic limit $u_\sp=u_\sm=1$, profiles $(\hat u,\hat e)$ of  the rescaled equations \eqref{midealode} exist for all $1\ge u_\sp\ge u_*$, decaying exponentially to their end states $( u_\spm, e_\spm)$ as $x\to \pm \infty$, uniformly in $\Gamma, u_\sp, \mu, \eta, \nu$.
\end{lemma}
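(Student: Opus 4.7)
The plan is to view \eqref{midealode} as an autonomous planar ODE system on the $(u,e)$-plane whose equilibria are, by the Rankine--Hugoniot relations from \S\ref{rhcond}, exactly $(u_\sm,e_\sm)=(1,e_\sm)$ and $(u_\sp,e_\sp)$, and to construct the profile as a heteroclinic orbit between them. The analysis splits naturally into a local piece (linearization at each endstate, yielding hyperbolicity, exponential decay, and uniformity) and a global piece (a two-dimensional phase-plane argument producing the connection).

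For the local piece, I would compute the Jacobians of the right-hand sides of \eqref{midealode} at each endstate. At $(1,e_\sm)$ a short calculation gives
\[
\det = \frac{1-\Gamma(1+\Gamma)e_\sm}{(2\mu+\eta)\nu}, \qquad \tr=\frac{1-\Gamma e_\sm}{2\mu+\eta}+\frac{1}{\nu},
\]
and substituting the formula \eqref{e-} reduces the numerator of the determinant to $(\Gamma+2)(1-u_\sp)/2$ and that of the first term of the trace to $(3\Gamma+2-(\Gamma+2)u_\sp)/(2(\Gamma+1))$. Both are positive and bounded away from zero uniformly over the admitted parameter range, \emph{including} the strong-shock limit $u_\sp\to u_*=\Gamma/(\Gamma+2)$ where $e_\sm\to 0$; hence $(1,e_\sm)$ is a uniform source. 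An analogous computation at $(u_\sp,e_\sp)$, using the Lax condition $u_\sp<u_\sm=1$ together with \eqref{e+}--\eqref{rho+}, produces one strictly positive and one strictly negative eigenvalue, uniformly bounded away from zero, so that $(u_\sp,e_\sp)$ is a uniform saddle. The stable/unstable manifold theorem with uniform constants then supplies one-dimensional local invariant manifolds together with the claimed exponential decay at $\pm\infty$.

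For the global piece, I would construct a compact positively invariant rectangle $R\subset\{e>0\}$ in the $(u,e)$-plane containing both equilibria by checking that on each side of $R$ the vector field $(u',e')$ from \eqref{midealode} points inward; this reduces to sign inspection of the polynomial right-hand sides together with the fact, again read off from \eqref{midealode}, that the only zeros of the vector field in $R$ are the two endstates. The unstable manifold of the source $(1,e_\sm)$ then enters $R$ and cannot leave; by Poincar\'e--Bendixson, combined with a Dulac or direct divergence-sign argument ruling out periodic orbits of the dissipative system \eqref{midealode}, one of its branches must accumulate on the unique remaining equilibrium $(u_\sp,e_\sp)$ along its stable manifold. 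This is the classical Weyl--Gilbarg gas-dynamical connection argument, adapted to the rescaled variables.

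The main obstacle is verifying that both the hyperbolicity bounds and the inward-pointing inequalities on $\partial R$ persist \emph{uniformly} as $u_\sp\to u_*$, where the left endstate $(1,e_\sm)$ slides onto the boundary $\{e=0\}$ of the physical domain. The saving observation is that the right-hand sides of \eqref{midealode} are polynomial in $(u,e,e_\sm)$ and extend analytically to $e_\sm=0$; combined with the explicit form of the eigenvalue bounds computed above, this guarantees that no eigenvalue at the equilibria approaches the imaginary axis and that the vector field does not lose transversality on $\partial R$ in the limit, yielding the claimed uniform exponential decay.
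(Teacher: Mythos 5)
Your proposal is correct in outline and reconstructs from scratch essentially the argument that the paper only cites: the paper's own proof is two sentences, observing that Gilbarg's phase-plane construction \cite{Gi} extends to the limiting case $u_\sp=u_*$ (with $\hat u$, hence $\hat\rho=\hat u^{-1}$, uniformly bounded above and below), and then importing uniform exponential decay from the one-dimensional \emph{Lagrangian} analysis of \cite{HLyZ} via the coordinate equivalence $d\tilde x/dx=\hat\rho^{-1}$. You instead work directly in the rescaled Eulerian variables, and your local computations check out: at $(1,e_\sm)$ the determinant numerator $1-\Gamma(1+\Gamma)e_\sm$ does reduce to $(\Gamma+2)(1-u_\sp)/2$ and the trace term $1-\Gamma e_\sm$ to $(3\Gamma+2-(\Gamma+2)u_\sp)/(2(\Gamma+1))$, both uniformly positive for $u_\sp$ bounded away from $1$, while at $(u_\sp,e_\sp)$ the determinant is proportional to $(\Gamma+2)(u_\sp-1)/(2u_\sp)<0$; so the source/saddle structure with uniform spectral gaps survives $e_\sm\to 0$ exactly as you claim. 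What the paper's route buys is that the global connection and its uniformity are already established in \cite{HLyZ}; what yours buys is self-containedness and a transparent reason why the strong-shock limit is harmless.

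Two points in your global step need tightening before this is a proof. First, the unstable manifold of a planar source is an open neighborhood, not a curve with ``branches''; the clean formulation is to take the one-dimensional stable manifold of the saddle $(u_\sp,e_\sp)$ and follow it \emph{backward} in $x$, showing its $\alpha$-limit set is $(1,e_\sm)$. Second, and relatedly, a positively invariant rectangle does not confine backward orbits, so Poincar\'e--Bendixson cannot be applied to that $\alpha$-limit set until you exhibit a region trapping the relevant branch of the stable manifold in backward time; this is precisely what Gilbarg's nullcline construction supplies (the connecting orbit is monotone between the two isoclines), and it simultaneously excludes periodic orbits and homoclinic loops without a Dulac function --- which is just as well, since the divergence $\frac{1-\Gamma e/u^2}{2\mu+\eta}+\nu^{-1}$ of \eqref{midealode} is not obviously of one sign on an arbitrary rectangle containing both rest points. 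With those repairs (or by simply deferring, as the paper does, to \cite{Gi} and \cite{HLyZ}), the argument is complete.
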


\begin{proof}
As observed in \cite{HLyZ}, the argument of Gilbarg \cite{Gi}  for finite-amplitude shocks, $1\ge u_\sp>u_*$, applies also in the limiting case $u_\sp=u_*$, to yield existence on the whole parameter range with density $\hat \rho$ bounded uniformly above and below.  Uniform exponential decay then follows from the corresponding result established in Lagrangian coordinates in \cite{HLyZ}, together with the fact that the Lagrangian spatial variable $\tilde x$ is related to the Eulerian variable $x$ by
$d\tilde x/dx= \hat \rho^{-1}$, hence $|\tilde x|/C\le |x|\le C|\tilde x|$ for uniform $C>0$.  
\end{proof}


\section{Linearized eigenvalue equations}
\label{lineig}

For compactness of notation, set
\begin{equation}\label{mus}
\tilde \mu=(2\mu + \eta), \quad \tilde \eta = (\mu + \eta),
\quad \nu=\kappa/c_v, \quad \hat p = \Gamma \hat\rho\hat e.
\end{equation}
Specializing to the ideal gas case, linearizing about the stationary solution $(\hat \rho, \hat u, \hat v, \hat e)$ and taking the Fourier transform in the $x_2$ direction, we obtain (recalling that $\hat v\equiv 0$) the generalized eigenvalue equations
\begin{subequations}
\label{linearized}
\begin{align}
\lambda \rho
+(\hat \rho u+\hat u\rho)'
+\mi\xi \hat \rho v 
&=0\,, \\
\lambda (\hat \rho u + \hat u \rho)
+ \Big(2u + \hat u^2 \rho+\Gamma (\hat e\rho + \hat \rho e)\Big)'
& =\tilde\mu u''-\xi^2\mu u+\mi\xi(\tilde\eta v'-v)\,, \\
\lambda \hat \rho v +v' 
+ \mi\xi\Gamma (\hat e\rho + \hat \rho e)
& =\mu v''-\xi^2\tilde \mu v+\mi\xi\tilde \eta u',
\end{align}
\begin{multline}
\lambda\left(\rho(\hat e+\hat u^2/2)+u+\hat\rho e\right)
+\left(\gamma(\rho\hat e\hat u+ e+\hat\rho \hat e u)+\frac{1}{2}\rho\hat u^3+\frac{3}{2}\hat u u\right)'\\
+ \mi\xi\left(\gamma\hat\rho\hat ev+\frac{1}{2}v\hat u\right) 
=\tilde\mu(\hat uu'+u\hat u_{x_1})'+\mi\xi\eta(\hat uv)'-\xi^2\mu\hat u u\\
+\mi\xi\mu\hat u v'+\mi\xi\eta v\hat u_{x_1} + \nu e''-\xi^2\nu e.
\end{multline}
\end{subequations}
where $\xi\in \mathbb{R}$ is the Fourier frequency in the $x_2$ direction.

Defining flux variables
\begin{subequations}\label{flux}
\begin{align}
w&:= -\hat \rho u- \hat u\rho,\\
x&:= \tilde \mu u' - (2u + \hat u^2 \rho) -\Gamma (\hat e\rho + \hat \rho e) +\mi\xi \tilde \eta v,\\
y&:=  \mu v' - v +\mi\xi\tilde \eta u,\\
z&:= \tilde \mu (u\hat u_{x_1}+\hat uu') + \nu e' -\gamma( e + \hat u \hat e \rho + \hat e \hat \rho u) - \left(\frac{3\hat u}{2}u+ \frac{\hat u^3}{2}\rho \right) +\mi\xi\tilde \eta \hat uv
\end{align}
\end{subequations}
and modifying by
\begin{subequations}\label{tflux}
\begin{align}
\tilde x&:= x-\hat u w = \tilde \mu u'
- u  -\Gamma (\hat e\rho + \hat  \rho e) +\mi\xi \tilde \eta v,
\\
\tilde z&:= z- \hat u\tilde x -\hat E w =
\nu e' + \tilde \mu \hat u_{x_1} u
-e -\Gamma\hat\rho\hat eu,
\end{align}
\end{subequations}
we may write \eqref{linearized} as a first-order system
\begin{subequations}\label{tfirst}
\begin{align}
w' &= -\lambda \hat\rho w - \lambda \hat\rho^2 u + i\xi\hat\rho v,\\
\tilde x' &= -\hat u_{x_1} w + (\lambda \hat \rho  +\mu \xi^2) u,\\
y' &= -i \xi \hat p w - i \xi \hat p \hat\rho u + (\lambda\hat\rho + \xi^2\tilde\mu) v + i\xi\Gamma\hat\rho e,\\
\tilde z' &= -\hat e_{x_1} w - \hat u_{x_1} \tilde x + \mi\xi f(\hat U,\hat U_{x_1}) v + (\lambda\hat\rho + \xi^2\nu) e,\\
\tilde\mu u' &= -\hat p w + \tilde x + (1-\hat p \hat\rho) u - i\xi\tilde\eta v + \Gamma\hat\rho e,\\
\mu v' &= y - i\xi\tilde\eta u + v,\\
\nu e' &= \tilde z + g(\hat U,\hat U_{x_1}) u + e,
\end{align}
\end{subequations}
where
\begin{subequations}\label{eq:matrix_funcs}
\begin{align} \label{eq:g_eq}
f(\hat U,\hat U_{x_1})&:= \hat p + (\mu-\eta)\hat u_{x_1} =  \Gamma\hat\rho\hat e+(\mu-\eta)\hat u_{x_1}, \\
\intertext{and}
g(\hat U,\hat U_{x_1}) &:=\hat p - \tilde\mu\hat u_{x_1} = \Gamma\hat\rho\hat e-\tilde\mu\hat u_{x_1}.
\end{align}
\end{subequations}

\section{Evans function formulation}
\label{evansform}

Defining $\vec{W}:=(w,\tilde x,y,\tilde z ,u,v,e)^T$, using $\hat u^{-1}\equiv \hat \rho$, and rearranging, we may express \eqref{tfirst} in the basic form
\begin{equation}
\label{firstorder}
\vec{W}' = \mat{A}(x_1;\lambda, \xi) \vec{W},\quad '=\frac{\dif}{\dif x_1},
\end{equation}
with
\begin{equation}
\label{evans_ode}
\mat{A}(x_1;\lambda,\xi):= \\
\begin{pmatrix}
-\lambda\hat \rho  & 0 & 0&  0 & 
-\lambda\hat \rho^2  & \mi\xi \hat \rho  & 0\\
-\hat u_{x_1} & 0&  0& 0  & \lambda \hat \rho +\mu \xi^2 & 0 & 0\\
-\mi\xi\hat p & 0 & 0 & 0 & -\mi\xi\hat p \hat\rho & \lambda\hat\rho+\tilde\mu\xi^2 & \mi\xi\Gamma\hat\rho \\
- \hat e_{x_1} & -\hat u_{x_1} & 0&  0 & 
0& \mi\xi f(\hat U,\hat U_{x_1}) & \lambda\hat\rho+\xi^2\nu\\
-\tilde\mu^{-1}\hat p & \tilde\mu^{-1} & 0 & 0 & \tilde\mu^{-1}(1-\hat p \hat\rho) & -\mi\xi\tilde\mu^{-1}\tilde\eta & \tilde\mu^{-1}\Gamma\hat\rho \\
0 & 0 & \mu^{-1} & 0 & -\mi\xi\mu^{-1}\tilde\eta & \mu^{-1} & 0 \\ 
0 & 0 & 0 & \nu^{-1} & \nu^{-1} g(\hat U,
\hat U_{x_1}) & 0 & \nu^{-1}
\end{pmatrix}.
\end{equation}

\subsection{Balanced flux form}\label{balanced}

Next, loosely following \cite{PZ}, we reformulate the system of eigenvalue ODEs in ``balanced flux'' form. To do so, we rescale the flux variables via
\begin{equation}
\label{eq:flux_rescale}
\check r\check w=w,\;
\check r\check x=\tilde x,\;
\check r\check y=y,\;\text{and}\; 
\check r\check z=\tilde z\,,
\end{equation}
where $\check r$ is given by 
\begin{equation}
\label{eq:scale_factor}
\check r(\lambda,\xi):=\sqrt{|\xi|^2 + |\lambda|^2}.
\end{equation}
Then, writing $\check{\vec{W}}:=(\check w,\check x,\check y,\check z,u,v,e)^T$ and $\check r\check \lambda=\lambda,\;r\check\xi=\xi$, we may re-express \eqref{firstorder} in the form  
\begin{equation}\label{bfirstorder}
\check{\vec{W}}'=\check{\mat{A}}(x_1;\check \lambda, \check \xi, \check r)\check{\vec{W}},
\end{equation}
where
\begin{multline} \label{bevans_ode}
\check{\mat{A}}(x_1;\check \lambda, \check \xi, \check r) := \\
\begin{pmatrix}
-\check r\check\lambda\hat \rho  & 0 & 0&  0 & 
-\check\lambda\hat \rho^2  & \mi\check\xi \hat \rho  & 0\\
-\hat u_{x_1} & 0&  0& 0  & \check\lambda \hat \rho +\check r\mu \check\xi^2 & 0 & 0\\
-\mi\check  r\check\xi\hat p & 0 & 0 & 0 & -\mi\check\xi\hat p\hat\rho & \check\lambda\hat\rho+\check r\tilde\mu\check\xi^2 & \mi \check\xi\Gamma\hat\rho \\
- \hat e_{x_1} & -\hat u_{x_1} & 0&  0 & 
0& \mi\check\xi f(\hat U,\hat U_{x_1}) & \check\lambda\hat\rho+\check{r}\check\xi^2\nu\\
-\tilde\mu^{-1}\check r\hat p & \tilde\mu^{-1}\check r & 0 & 0 & \tilde\mu^{-1}(1-\hat p\hat\rho) & -\mi \check r\check\xi\tilde\mu^{-1}\tilde\eta & \tilde\mu^{-1}\Gamma\hat\rho \\
0 & 0 & \mu^{-1}\check r & 0 & -\mi \check r\check\xi\mu^{-1}\tilde\eta & \mu^{-1} & 0 \\ 
0 & 0 & 0 & \nu^{-1}\check r &  \nu^{-1} g(\hat U,\hat U_{x_1}) & 0 & \nu^{-1}
\end{pmatrix}.
\end{multline}

In the one-dimensional case $\xi=0$, this reduces to the integrated equations of \cite{HLyZ}.  Thus, the balanced flux form may be viewed as a generalization to the multidimensional case of the integrated equations commonly used in one dimension.  Like the integrated equations, equations \eqref{bevans_ode} have the advantage of removing the zero eigenvalue at the origin of the original system \eqref{evans_ode}, hence are preferable for numerical stability computations \cite{BHLyZ2}.


\subsection{Consistent splitting}\label{splitting}
Denote by 
\[
\check{\mat{A}}_\spm(\check \lambda, \check \xi, \check r):= \lim_{x_1\to \pm \infty} \check{\mat{A}}(x_1;\check \lambda, \check \xi, \check r)
\]
the limiting coefficient matrices at $x_1=\pm \infty$.  (These limits exist by exponential convergence of profiles, Lemma \ref{profdecay}.) Denote by $S_\spm=S_\spm(\check \lambda, \check \xi,\check r) $ and  $U_\spm=U_\spm(\check \lambda, \check \xi,\check r)$ the stable and unstable  subspaces of $\check{\mat{A}}_\spm$.

\begin{definition}
Following \cite{AGJ}, we say that \eqref{firstorder} exhibits consistent splitting on a given $(\check\lambda, \check \xi, r)$-domain if $\check{\mat{A}}_\spm$ are hyperbolic, with $\dim S_\sp$ and $\dim U_\sm$ constant and summing to the dimension of the full space (in this case $7$).
\end{definition}

By continuous dependence of $\check{\mat{A}}$ on $\check \lambda, \check \xi, \check r$ and  standard matrix perturbation theory, $S_\sp$ and $U_\sm$ are continuous on any domain for which consistent splitting holds.

\begin{lemma}
\label{consistent}
For all $\Gamma >0$, $\mu, \nu>0$, $\eta \ge 0$, $1 \ge u_\sp>u_*$, \eqref{firstorder}--\eqref{evans_ode} exhibit consistent splitting on $\{ \Re \check \lambda \ge 0, \check \xi \in \mathbb{R}, \, r> 0\}$, with $\dim S_+=4$ and $\dim U_-=3$.  Moreover, for $1>u_\sp\ge u_*$, subspaces $S_\sp$ and $U_\sm$, along with their associated spectral projections, extend continuously to
\[
\{ \Re \check \lambda \ge 0, \check \xi \in \mathbb{R}, \, \check r \ge 0\}\,,
\]
with respect to all arguments $\check \lambda, \check \xi, \check r$ and parameters $\Gamma, \mu, \eta, \nu, u_\sp$.
\end{lemma}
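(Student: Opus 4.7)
The plan is to prove the lemma in two stages: (i) consistent splitting of $\check{\mat{A}}_\spm$ with the claimed dimensions on $\{\Re\check\lambda \ge 0,\ \check r > 0\}$, by reduction to classical dispersion-relation analysis for the original coefficient matrix $\mat{A}_\spm(\lambda,\xi)$ of \eqref{evans_ode}; and (ii) continuous extension of $S_\sp$, $U_\sm$ and their spectral projections through the degenerate sub-boundaries $\check r = 0$ and $u_\sp = u_*$, which is the principal obstacle.

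For stage (i), observe that the balanced-flux rescaling \eqref{eq:flux_rescale} amounts to the nonsingular linear similarity $\mathrm{diag}(\check r,\check r,\check r,\check r,1,1,1)$ together with $(\lambda,\xi) = \check r(\check\lambda,\check\xi)$. Hence, for $\check r > 0$, the hyperbolicity and invariant-subspace structure of $\check{\mat{A}}_\spm$ agree with those of $\mat{A}_\spm(\lambda,\xi)$, and I am reduced to the classical fact that $\mat{A}_\spm$ has no purely imaginary eigenvalues on $\{\Re\lambda \ge 0,\ \xi\in\mathbb{R},\ (\lambda,\xi)\ne(0,0)\}$. This is the strict dissipativity of compressible Navier--Stokes as a hyperbolic--parabolic system of Kawashima class: purely imaginary eigenvalues of $\mat{A}_\spm$ correspond to plane-wave solutions of the linearized constant-coefficient problem, which a symmetrizer energy estimate (equivalently, a direct check of the Kawashima genuine-coupling condition for the polytropic ideal gas) rules out on this set. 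The dimension count is then fixed by homotopy along $(\lambda,\xi) = (R,0)$ as $R\to+\infty$: the three parabolic directions $u,v,e$ contribute eigenvalues of size $\pm\sqrt{R}$, three stable and three unstable, while the remaining convective mode from the mass equation has eigenvalue $\sim -R/\hat u_\spm < 0$ on both sides (since $\hat u_\spm > 0$), giving $\dim S_\spm = 4$ on each side and hence $\dim S_\sp = 4$, $\dim U_\sm = 7-4 = 3$.

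Stage (ii) is the main difficulty. For $\check r \to 0^+$: direct inspection of \eqref{bevans_ode} at $x_1 = \pm\infty$ shows that $\check{\mat{A}}_\spm(\check\lambda,\check\xi,0)$ is block upper-triangular, with a vanishing $4\times 4$ top-left flux block and a nondegenerate $3\times 3$ parabolic block on the $(u,v,e)$ coordinates in the bottom-right; strict hyperbolicity fails as $0$ becomes a semisimple eigenvalue of multiplicity four. The whole point of the balanced-flux normalization is precisely that the four eigenvalues of $\check{\mat{A}}_\spm$ that tend to $0$ as $\check r\to 0^+$ do not collide with the three eigenvalues of the bottom-right block, the latter remaining bounded away from the imaginary axis throughout the closed parameter range (a short explicit computation on the $2\times 2$ subblock exhibits the key determinant as proportional to $1 - M^{-2}$, with $M$ the endstate Mach number). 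Kato's theorem then gives analytic extension of the total spectral projection onto the vanishing four-dim eigengroup down to $\check r = 0$; tracking the signs of the leading $\check r$-expansion of these eigenvalues fixes their stable/unstable assignment, and combining with the fixed splitting of the bottom-right block yields the asserted continuous extensions of $S_\sp$ and $U_\sm$. For the infinite-amplitude corner $u_\sp \to u_*$: Lemma \ref{profdecay} and the explicit formulas \eqref{e+}--\eqref{rho+} show that the endstates, and hence the entries of $\check{\mat{A}}_\spm$, depend analytically on $u_\sp$ through the closed interval $[u_*,1)$, the only singular feature being $\hat e_\sm \to 0$, which enters polynomially. It then suffices to check that no eigenvalue of $\check{\mat{A}}_\spm$ crosses the imaginary axis in the limit, which follows from a uniform spectral gap on the parabolic block; the only subtlety is the supersonic endstate $-\infty$ where $c_\sm \to 0$, but there $M_- \to \infty$ rather than $\to 1$, so $1 - M_-^{-2}$ remains away from zero and the gap is preserved.
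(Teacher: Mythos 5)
Your stage (i) is sound for finite-amplitude shocks $u_\sp>u_*$ and matches in substance what the paper delegates to the general results of \cite{GMWZ}: Kawashima-class dissipativity rules out purely imaginary eigenvalues of $\mat{A}_\spm$ for $\Re\lambda\ge 0$, $(\lambda,\xi)\ne(0,0)$, and the homotopy $\lambda\to+\infty$ fixes $\dim S_\sp=4$, $\dim U_\sm=3$. The genuine gap is in stage (ii), at the corner $u_\sp\to u_*$ on the $-\infty$ side. There $e_\sm\to 0$ and, as Remark \ref{nokaw} records, the system \emph{leaves} the Kawashima class: the first-order symbol loses hyperbolicity (it acquires a Jordan block, the sound speed $c_\sm\to 0$ making all characteristics coalesce at $\hat u=1$), and symmetrizability and genuine coupling fail. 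So the symmetrizer/genuine-coupling argument you invoke in stage (i) is precisely the tool that is unavailable at this endstate, and your assertion in stage (ii) that ``no eigenvalue of $\check{\mat{A}}_\spm$ crosses the imaginary axis in the limit'' is false: at $e_\sm=0$ the limiting dispersion relation \eqref{symbol} is block upper-triangular and contains the uncoupled hyperbolic root $\lambda\equiv-\mi\zeta_1$, i.e.\ $\mat{A}_\sm$ has the purely imaginary eigenvalue $\alpha=-\lambda$ whenever $\lambda$ is purely imaginary. A ``uniform spectral gap on the parabolic block'' says nothing about this convective mode, and the quantity $1-M_-^{-2}$ is not where the degeneracy lives.

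The paper closes this by explicit computation: it writes out the limiting symbol \eqref{symbol}, reads off all roots from its triangular structure, observes that the single neutral root $\alpha=-\lambda$ is strictly stable for $\Re\lambda>0$, and then \emph{redefines} $S_\sm$ on $\Re\lambda\ge0$ as the center-stable subspace $CS_\sm$, which remains strictly spectrally separated from $U_\sm$; continuity then follows from standard matrix perturbation theory. Your proposal needs this identification of the neutral mode and the center-stable relabeling (or an equivalent device) for the claimed continuous extension to make sense at $u_\sp=u_*$. Your treatment of the $\check r\to 0$ limit is closer to the mark, though the paper justifies the $4$--$3$ split there via the noncharacteristic, all-incoming structure of the one-shock and the slow/fast mode dichotomy of \cite{Z3,GMWZ} rather than a $2\times2$ determinant computation, and again must handle the $e_\sm=0$ corner outside the scope of those symbolic analyses.
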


\begin{proof}
For finite-amplitude shocks, $u_\sp>u_*$, this follows by
the general results of \cite{GMWZ}, which apply
in particular to the compressible Navier--Stokes equations
with either standard or van der Waals equation of state.
Though carried out for form \eqref{evans_ode},
the analysis of \cite{GMWZ} applies equally to the form \eqref{bfirstorder}.
(Away from $\check r=0$, this follows immediately by invariance of eigenvalues
under a nonsingular change of coordinates; near $\check r=0$, it follows by 
repeating the analysis of \cite{GMWZ} in the new coordinates.)

As observed in \cite{HLyZ}, the same argument applies to $\check{\mat{A}}_\sp$
also in the limiting case $u_\sp=u_*$, since the hypotheses of \cite{GMWZ}
remain satisfied.
Thus, as in the one-dimensional analysis of \cite{HLyZ}, 
it remains only to verify the claim for $\check{\mat{A}}_\sm$ in the 
limiting case $u_\sp\to u_*$, for which $e_\sm, u_\sm, \rho_\sm$ converge to 
$0,1,1$.
For $\check r>0$, this is most conveniently verified by working in
the original coordinates of \eqref{evans_ode}
and recalling the correspondence between triples
$(\lambda, \alpha, \mi \xi)$ with $\alpha$ an eigenvalue
of $\mat{A}_\sm(\lambda, \xi)$ and solutions $(\lambda, \mi\zeta_1,\mi\zeta_2)$
of the dispersion relation 
\[
\lambda \in \sigma \Big(-\sum_{j=1}^2 \mi\zeta_j a_j -\sum_{j,k=1}^2 \zeta_j\zeta_k b_{jk}\Big)
\]
of the symbol of the limiting eigenvalue equation written as a second-order system \cite{Z3,Z5}. 
In coordinates $\mathcal{U}:=(\rho, u,v,e)^T$, we find, consulting equations (1.45)--(1.47) of \cite{Z5}, that, for $\rho=u=1$, $v=e=0$,
\begin{equation}
\label{symbol}
\begin{aligned}
\sum_{j=1}^2 \mi\zeta_j a_j  &=
\begin{pmatrix}
\mi\zeta_1 &  \mi\vec{\zeta}^T  & 0\\
0&  \mi\zeta_1 \mat{I}_2  & \mi R \vec{\zeta} \\
0 & 0 &  \mi\zeta_1 
\end{pmatrix}\\
\sum_{j,k=1}^2 \zeta_j\zeta_k b_{jk}&=
\begin{pmatrix}
0 & 0 & 0\\
0 & \mu \mat{I}_2 + (\mu+\eta)\vec{\zeta} \vec{\zeta}^T & 0\\
0 & 0 &  \nu |\vec{\zeta}|^2
\end{pmatrix},
\end{aligned}
\end{equation}
from which, by upper block-triangular form, we may read off
the solutions
\begin{equation}\label{special}
\lambda\equiv  -\mi\zeta_1, 
\end{equation}
\[
\lambda= -\mi\zeta_1 -\nu |\vec{\zeta}|^2,
\]
and 
\[
\lambda \in -\mi\zeta_1 - \sigma\Big( \mu \mat{I}_2 + (\mu+\eta)\vec{\zeta}\vec{\zeta}^T \Big).
\]
Evidently, the only pure imaginary $\alpha=\mi\zeta_1$ corresponding to $\Re \lambda \ge 0$ is the single root $\alpha \equiv -\lambda$ for $\lambda$ pure imaginary, associated with the special solution \eqref{special}, arising through the uncoupled hyperbolic mode represented by the first row of symbol $-\sum_j \mi\zeta a_j-\sum_{jk}\zeta_j\zeta_k b_{jk}$, with all other roots $\alpha$ strictly stable (negative real part) or unstable (positive real part) on $\xi\in \mathbb{R}$, $\Re \lambda\ge 0$.  The root $-\lambda$ is strictly stable for $\Re \lambda>0$, from which we see that consistent splitting holds for $\xi\in \mathbb{R}$ and $\Re \lambda>0$.

Moreover, observe that the total eigenspace associated with
$-\lambda$ along with the remaining, strictly stable eigenvalues
of $\mat{A}_\sm$, corresponding for $\Re \lambda$ to the stable subspace $S_\sm$, 
converges at the boundary $\Re \lambda \to 0$ to the center-stable
subspace $CS_\sm$ of $\mat{A}_\sm$, which remains strictly spectrally separated 
from the unstable subspace $U_-$.
Redefining $S_\sm$ as $CS_\sm$ on the whole of $\Re \lambda \ge 0$, we thus
obtain the claimed continuity by standard matrix perturbation theory,
for all $\check r>0$.

Finally, we consider the case $\check r\to 0$.
From \eqref{symbol}, we see that the shock remains noncharacteristic
at $x=-\infty$ as $u_\sp\to u_*$; moreover, all modes are {\it incoming}
to the shock (positive hyperbolic characteristics $c_j\equiv \zeta_1$),
corresponding to its nature as a ``one-shock'', 
or left-moving wave relative to the ambient fluid flow $\hat u>0$.
Recalling the low-frequency analysis of \cite{Z3,GMWZ}, we
may thus conclude that the $4$ center-stable modes at $x=-\infty$
exactly correspond to the $4$ ``slow'', or ``hyperbolic'' modes
whose eigenvalues converge to zero as $r\to 0$, with the other
$3$ unstable modes corresponding to  ``fast'', or ``parabolic''
modes whose eigenvalues are bounded away from zero as $\check r\to 0$.
From this spectral separation, we obtain the claimed continuity
by standard matrix perturbation theory.
\end{proof}

\begin{remark}\label{nokaw}
From \eqref{symbol}, we see that system \eqref{eq:mass}--\eqref{eq:energy} leaves the class of Kawashima-type symmetric hyperbolic--parabolic systems \cite{K} in the nonphysical limit $e\to 0$, losing the properties of {\rm symmetrizability} and {\rm genuine coupling} (see \cite{Z5,Z3,KSh} for definitions of these terms); indeed, the first-order part of the symbol is no longer hyperbolic, featuring a Jordan block.  Thus, the symbolic analyses of \cite{GMWZ, Z5,Z3} do not apply in this case.
\end{remark}


\subsection{Construction of the Evans function}
\label{construction}
We now construct the Evans function associated with 
\eqref{bfirstorder}, following the approach of \cite{MZ1,PZ}.

\begin{lemma}
\label{cbasis}
There exist bases 
\[
V^-=(V_1^-, V_2^-, V_3^-)(\check \lambda, \check \xi, \check r), 
\quad
V^+=(V_4^+, V_5^+,V_6^+,V_7^+)(\check \lambda, \check \xi, \check r)
\]
 of $U_\sm(\check \lambda, \check \xi, \check r)$ and $S_\sp(\check \lambda, \check \xi, \check r)$, 
extending continuously in $\check \lambda, \check \xi, \check r$
and $\Gamma, \nu, \mu, \eta, u_\sp$ to
$\{\Re \lambda \ge 0, \, \xi\in \mathbb{R} \}$ for $\Gamma>0$, $\nu, \mu>0$, 
$\eta\ge 0$, and $1>u_\sp\ge u_*$.
\end{lemma}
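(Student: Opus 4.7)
The plan is to reduce the statement to a topological triviality by leveraging the substantive analytic input already established in Lemma \ref{consistent}: continuous extension of the spectral projections onto $U_\sm$ and $S_\sp$ throughout the closure of the parameter domain, including the degenerate boundary $\check r=0$ and the infinite-amplitude limit $u_\sp=u_*$.

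First, I would identify the parameter domain. Writing $\check\lambda=\lambda/\check r$, $\check\xi=\xi/\check r$, the pair $(\check\lambda,\check\xi)$ lies on the half-sphere
\[
\Sigma:=\{(\check\lambda,\check\xi)\in\mathbb{C}\times\mathbb{R}:\Re\check\lambda\ge 0,\;|\check\lambda|^2+\check\xi^2=1\},
\]
which is homeomorphic to a closed $2$-disk and hence contractible. Combined with $\check r\in[0,\infty)$ and the (half-)closed intervals for the physical parameters $\Gamma,\mu,\eta,\nu$ and $u_\sp\in[u_*,1)$, the full parameter space $\Omega$ is contractible, in particular simply connected.

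Second, by Lemma \ref{consistent} the spectral projections $\mat{P}_\sm^u$ onto $U_\sm$ and $\mat{P}_\sp^s$ onto $S_\sp$ exist and depend continuously on $\zeta\in\Omega$, with constant ranks $3$ and $4$ respectively. These define continuous rank-$3$ and rank-$4$ subbundles of the trivial bundle $\Omega\times\mathbb{C}^7$. To produce bases, I would use Kato-type transport: fix a base point $\zeta_0\in\Omega$, choose arbitrary bases $\{V_j^\sm(\zeta_0)\}_{j=1,2,3}$ and $\{V_k^\sp(\zeta_0)\}_{k=4,5,6,7}$ of $U_\sm(\zeta_0)$ and $S_\sp(\zeta_0)$, and for $\zeta$ in a neighborhood of $\zeta_0$ set
\[
V_j^\sm(\zeta):=\mat{P}_\sm^u(\zeta)V_j^\sm(\zeta_0),\qquad V_k^\sp(\zeta):=\mat{P}_\sp^s(\zeta)V_k^\sp(\zeta_0).
\]
By continuity of the projections these vectors remain linearly independent and hence span the target subspaces on the neighborhood. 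Simple connectedness of $\Omega$ ensures that the local patches assemble, with no monodromy obstruction, into globally continuous bases $V^\sm$ and $V^\sp$; joint continuity in $(\check\lambda,\check\xi,\check r)$ and in the physical parameters is inherited directly from that of $\mat{P}_\sm^u,\mat{P}_\sp^s$.

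The main obstacle is not the basis construction but the uniform continuity of the projections at the degenerate portions of the boundary, namely $\check r=0$ and $u_\sp=u_*$, where standard matrix perturbation theory does not apply directly: at $u_\sp=u_*$ the hyperbolic part loses symmetrizability and genuine coupling (Remark \ref{nokaw}), while at $\check r=0$ the ``slow'' hyperbolic modes of $\check{\mat{A}}_\sm$ coalesce and, on $\Re\check\lambda=0$, touch the pure imaginary axis. Both issues are already handled in the proof of Lemma \ref{consistent} via the low-frequency spectral-gap analysis of \cite{Z3,GMWZ} and the redefinition $S_\sm:=CS_\sm$ on the imaginary boundary. Given those inputs, the present lemma is a routine topological consequence.
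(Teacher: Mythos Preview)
Your proposal is correct and is essentially a fleshed-out version of the paper's own proof, which consists of the single line ``Standard matrix perturbation theory; see, e.g., \cite{Kato}.'' The paper relies on the reader to unpack that citation exactly as you have: continuity of the projections (from Lemma~\ref{consistent}) plus contractibility of the parameter domain yields triviality of the associated subbundles, hence global continuous frames. Your explicit identification of the parameter half-sphere as a disk and the Kato-transport construction are the standard ingredients the paper is invoking.
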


\begin{proof}
Standard matrix perturbation theory; see, e.g., \cite{Kato}.
\end{proof}

\begin{lemma}\label{basis}
There exist bases 
\[
\check W^\sm=(\check W_1^\sm, \check W_2^\sm, \check W_3^\sm)(\check \lambda, \check \xi, \check r), \quad \check W^\sp=(\check W_4^\sp, \check W_5^\sp,\check W_6^\sp,\check W_7^\sp)(\check \lambda, \check \xi, \check r)
\]
of the unstable manifold at $x_1=-\infty$ and the stable manifold at $x_1=+\infty$
of \eqref{bfirstorder} asymptotic to $\me^{\check{\mat{A}}_\sm x_1}V^\sm$ and $\me^{\check{\mat{A}}_\sp x_1}V^\sp$, respectively,
as $x\to \mp\infty$,
extending continuously in $\check \lambda, \check \xi, \check r$
and $\Gamma, \nu, \mu, \eta, u_\sp$ to
$\{\Re \check \lambda \ge 0, \, \check \xi\in \mathbb{R}, \, \check r\ge 0 \}$ 
for $\Gamma>0$, $\mu>|\eta|\geq0$, $\nu> 0$, and $1>u_\sp\ge u_*$.
\end{lemma}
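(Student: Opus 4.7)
The plan is to apply the parameter-dependent \emph{conjugation lemma} of M\'etivier--Zumbrun \cite{MZ1} (a version of the gap lemma in the spirit of \cite{GZ}) separately at $x_1=\pm\infty$ to produce invertible change-of-coordinates matrices $P_\spm(x_1;\check\lambda,\check\xi,\check r)$, continuous jointly in $x_1$ and all listed parameters, with $P_\spm\to\mat{I}$ as $x_1\to\pm\infty$, that conjugate \eqref{bfirstorder} near $\pm\infty$ to the constant-coefficient systems $\tilde W'=\check{\mat{A}}_\spm\tilde W$. The required bases are then built as
\[
\check W^\sm := P_\sm\,\me^{\check{\mat{A}}_\sm x_1}V^\sm, \qquad \check W^\sp := P_\sp\,\me^{\check{\mat{A}}_\sp x_1}V^\sp,
\]
with $V^\spm$ supplied by Lemma \ref{cbasis}. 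Continuity and the claimed asymptotics $\check W^\spm\sim \me^{\check{\mat{A}}_\spm x_1}V^\spm$ as $x_1\to\mp\infty$ are then immediate from the constructive form of the conjugator $P_\spm$.

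To deploy the conjugation lemma I would verify two hypotheses. First, that $\check{\mat{A}}(x_1;\cdot)-\check{\mat{A}}_\spm(\cdot)$ decays at a uniform exponential rate $\theta>0$ as $x_1\to\pm\infty$: inspecting \eqref{bevans_ode}, the nonconstant entries depend only on $\hat u-u_\spm$, $\hat e-e_\spm$, $\hat u_{x_1}$ and $\hat e_{x_1}$, and Lemma \ref{profdecay} gives uniform exponential convergence of $(\hat u,\hat e)$ to $(u_\spm,e_\spm)$; the derivative terms decay at the same rate because $(\hat u,\hat e)$ satisfies the smooth first-order ODE \eqref{midealode} whose right-hand side vanishes at the rest points. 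Second, that $\check{\mat{A}}_\spm$ admits a continuous spectral splitting over the closed parameter domain with gap strictly exceeding $-\theta$: this is precisely Lemma \ref{consistent}, with the center-stable interpretation $S_\sm=CS_\sm$ at the boundary components $\Re\check\lambda=0$, $\check r=0$, and $u_\sp=u_*$. Since the chosen invariant subspaces have spectral separation $\ge 0$ and $\theta>0$, the gap condition is automatic.

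The most delicate part will be the corner of parameter space where $\check r\to 0$ together with $u_\sp\to u_*$. There the first-order part of the symbol of $\check{\mat{A}}_\sm$ loses hyperbolicity (Remark \ref{nokaw}) and eigenvalues collide at the origin, merging the slow hyperbolic modes of the low-frequency analysis with one another and with zero. The work of separating these modes from the fast parabolic modes is already absorbed into Lemma \ref{consistent} via the analysis of \cite{Z3,GMWZ}; once this block-level separation is in hand, the conjugation lemma is indifferent to further coalescences of eigenvalues \emph{within} the individual center-stable and unstable blocks, since it requires only a spectral gap between the two chosen invariant subspaces, not diagonalizability. No additional estimate is needed: the positive profile decay rate $\theta$ dominates the zero gap, and the continuity of $V^\spm$ along the degeneration (Lemma \ref{cbasis}) propagates to $\check W^\spm$ through the continuous matrices $P_\spm$.
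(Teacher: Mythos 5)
Your proposal is correct and follows essentially the same route as the paper: the paper's (one-line) proof likewise invokes the conjugation lemma of M\'etivier--Zumbrun (cited there as \cite{MeZ}, not \cite{MZ1}) together with the uniform exponential convergence of the coefficients furnished by Lemma \ref{profdecay} and the continuous bases $V^\spm$ of Lemma \ref{cbasis}. Your additional verification of the hypotheses and the discussion of the degenerate corner $\check r\to 0$, $u_\sp\to u_*$ simply make explicit what the paper leaves implicit.
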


\begin{proof}
This follows
by uniform exponential convergence of $\check{\mat{A}}$ to $\check{\mat{B}}_\spm$ as $x_1\to \pm \infty$, Lemma \ref{profdecay},
using the conjugation lemma of \cite{MeZ}
to construct solutions of the variable-coefficient system \eqref{bfirstorder} 
in terms of solutions of its constant-coefficient limits at $x=\pm \infty$.
\end{proof}

\begin{definition}\label{evansdef}
The Evans function associated with \eqref{bfirstorder}
is defined as
\begin{equation}\label{evanseq}
D(\lambda,\xi)=\check D(\check \lambda,\check\xi,\check r):=\det(\check W^\sp, \check W^\sm)|_{x_1=0}.
\end{equation}
\end{definition}

\begin{proposition}\label{evanscont}
The Evans function $\check D$ is continuous
in $\check \lambda, \check \xi, \check r$ and 
$\Gamma, \nu, \mu, \eta, u_\sp$ on 
$\{\Re \check \lambda \ge 0, \, \check \xi\in \mathbb{R}, \, \check r\ge 0\}$
for $\Gamma >0$, $\mu>|\eta|\ge 0$, $\nu> 0$, and $1>u_\sp\ge u_*$.  
Moreover, on $\{\Re \check \lambda \ge 0, \, \check \xi\in \mathbb{R}, \, \check r\ge 0\}
\setminus \{\check r=0\}$, for $u_\sp>u_*$,
its zeros correspond with eigenvalues of the 
integrated linearized operator about the shock layer, i.e., with
solutions of \eqref{linearized} decaying at $x_1=\pm \infty$.
\end{proposition}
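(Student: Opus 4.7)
The plan is to get the continuity from Lemma \ref{basis} essentially for free, and then reduce the eigenvalue correspondence to a dimension-count via the standard Evans-function argument, being careful about the two changes of variables (flux variables and the $\check r$-rescaling) that separate \eqref{bfirstorder} from \eqref{linearized}.

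\smallskip

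First I would prove continuity. By Lemma \ref{basis}, each of the columns $\check W_j^\spm$ extends continuously, jointly in $(\check\lambda,\check\xi,\check r)$ and in the physical parameters $(\Gamma,\mu,\eta,\nu,u_\sp)$, to the closed set $\{\Re\check\lambda\ge 0,\,\check\xi\in\mathbb{R},\,\check r\ge 0\}$ whenever $\Gamma>0$, $\mu>|\eta|\ge 0$, $\nu>0$, and $1>u_\sp\ge u_*$. Evaluating at $x_1=0$ and taking a $7\times 7$ determinant is a multilinear continuous operation, so $\check D$ inherits joint continuity in all of these variables.

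\smallskip

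Next I would verify the eigenvalue correspondence on the set where $\check r>0$ and $u_\sp>u_*$. By Lemma \ref{consistent}, consistent splitting holds there with $\dim U_\sm=3$ and $\dim S_\sp=4$, so $\dim U_\sm+\dim S_\sp=7$ equals the phase-space dimension. By the asymptotic behavior stated in Lemma \ref{basis}, the columns $\check W_1^\sm,\check W_2^\sm,\check W_3^\sm$ form a basis for the subspace of solutions of \eqref{bfirstorder} decaying at $x_1\to-\infty$, and $\check W_4^\sp,\ldots,\check W_7^\sp$ form a basis for the subspace decaying at $x_1\to+\infty$. A solution of \eqref{bfirstorder} decaying at both ends exists if and only if these two subspaces have a nontrivial intersection when evaluated at $x_1=0$, which by standard linear algebra is equivalent to vanishing of the $7\times 7$ determinant \eqref{evanseq}. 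Moreover, the dimensions of the two subspaces match complementary dimensions, so this equivalence is strict: $\check D(\check\lambda,\check\xi,\check r)=0$ if and only if \eqref{bfirstorder} admits a nontrivial solution decaying at $x_1=\pm\infty$.

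\smallskip

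It remains to translate decaying solutions of \eqref{bfirstorder} into decaying solutions of \eqref{linearized}. Since $\check r>0$, the flux rescaling \eqref{eq:flux_rescale} is a nonsingular diagonal change of variables, so decaying solutions of \eqref{bfirstorder} and of \eqref{firstorder} are in bijective correspondence. For \eqref{firstorder}$\leftrightarrow$\eqref{linearized}, one uses the definitions \eqref{flux}--\eqref{tflux}: given a decaying solution $\vec W=(w,\tilde x,y,\tilde z,u,v,e)$ of \eqref{firstorder}, the components $(u,v,e)$ together with the reconstructed relations \eqref{flux}--\eqref{tflux} and the derivative equations for $u',v',e'$ in \eqref{tfirst} yield a decaying classical solution $(\rho,u,v,e)$ of the generalized eigenvalue equations \eqref{linearized} at spectral parameter $\lambda$ (with $\rho$ recovered algebraically from $w,u$ via $w=-\hat\rho u-\hat u\rho$). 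Conversely, a decaying solution of \eqref{linearized} defines flux variables via \eqref{flux}--\eqref{tflux} that decay at $x_1=\pm\infty$ by exponential decay of $(\hat u-u_\spm,\hat e-e_\spm)$ from Lemma \ref{profdecay}, yielding a decaying solution of \eqref{firstorder}. This gives the stated correspondence between zeros of $\check D$ and eigenvalues of the linearized operator.

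\smallskip

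The only potentially delicate point, and the one I would check most carefully, is ensuring that the flux reconstruction gives a genuine bijection on the decaying subspaces (so that no spurious zeros are introduced and no eigenvalues are missed); this rests on the observation that $w$, $\tilde x$, $y$, $\tilde z$ are first-order differential expressions in $(\rho,u,v,e)$ whose exponential decay at $\pm\infty$ is equivalent, given the exponential decay of $(\hat\rho,\hat u,\hat e)-(\rho_\spm,u_\spm,e_\spm)$, to exponential decay of $(\rho,u,v,e)$ and their first derivatives, i.e., to the appropriate notion of decaying solution of the second-order system \eqref{linearized}.
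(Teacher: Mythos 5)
Your proposal is correct and follows essentially the same route as the paper's (very terse) proof: continuity is read off from Lemma \ref{basis}, and the zero--eigenvalue correspondence comes from the asymptotic description $\check W_j^\spm\sim \me^{\check{\mat{A}}_\spm x_1}V_j^\spm$ together with consistent splitting, which identifies the spans of the two families with the decaying manifolds at $\pm\infty$. The extra care you take in checking that the flux-variable and $\check r$-rescaling changes of coordinates give a bijection between decaying solutions of \eqref{bfirstorder} and of \eqref{linearized} is a detail the paper leaves implicit, but it is the right thing to verify and your argument for it is sound.
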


\begin{proof}
The first statement follows by Lemma \ref{basis}.
The second follows by the asymptotic description 
$W^\spm_j \sim \me^{\check{\mat{A}}_\spm x_1}V_j^\spm$ together 
with the consistent splitting property, which implies decay at $x_1=\pm \infty$
of the constant-coefficient solutions $\me^{\check{\mat{A}}_\spm x_1}V_j^\spm$.
\end{proof}

\begin{remark}
\label{stronglimit}
Proposition \ref{evanscont} includes in particular the key information that the Evans function converges in the strong shock limit $u_\sp\to u_*$ to the Evans function for the limiting system at $u_\sp=u_*$, uniformly on compact subsets of $\{\Re \lambda \ge 0\}$.
\end{remark}

\begin{remark}
\label{nonunique}
The Evans function as so far described is highly non-unique, due to the many choices for continuous prolongation of  subspaces $U_\sm(\check \lambda, \check \xi, \check r)$ and  $S_\sp (\check \lambda, \check \xi, \check r)$.  This non-uniqueness will be removed in Sections \ref{numerical}--\ref{sec:intermediate} by specification of a numerical continuation algorithm.
\end{remark}

\begin{remark}
\label{nonvanishing}
It is readily shown by a low-frequency calculation similar to those of \cite{ZS,Z3,Z5,GMWZ} that $D(\cdot, \cdot, 0)$ does not vanish at $\check r=0$ for $\Re \check \lambda \ge 0$, $\check \xi\in \mathbb{R}$, so long as the associated discontinuous shock satisfies the hyperbolic stability condition, as has been verified for ideal gas equation of state by Erpenbeck, Majda, and others \cite{Er,Maj1,Maj2,Maj3,Z5}.  Thus, the zero at the origin is indeed removed by the choice of balanced flux coordinates, as we verify by direct numerical computation in Section \ref{numerical}.  For further details, see \cite{BHLyZ2}.
\end{remark}


\section{High-frequency bounds}
\label{sec:hfb}
Adapting to the multidimensional case the approach of Humpherys et al.\ \cite{HLyZ}, we start by identifying a bounded set in $(\lambda,\xi)$-space outside of which there are no unstable eigenvalues. This will reduce our computational  domain to a compact set in frequency space, making the problem feasible for numerical Evans function computation.  Following \cite{GMWZ}, we introduce the parabolic coordinates
\begin{equation}
\label{prescale}
\lambda=\breve r\breve \lambda, \quad \xi=\breve r^{1/2}\breve \xi,
\end{equation}
\begin{equation}
\label{pradius}
\breve r:=|\xi|^2+|\lambda|,\quad 
|\breve \xi|^2 + |\breve \lambda|=1.
\end{equation}
(Not to be confused with the standard polar coordinates of Section \ref{balanced}.)  To simplify calculations, we restrict to the case $\breve \lambda$ {\it pure imaginary} that we shall ultimately need.\footnote{General $\breve \lambda$ may be treated as in \cite{GMWZ} by splitting into the two cases $\Re \breve \lambda \le \breve r^{-1/2}/C$ and $\Re \breve \lambda \ge \breve r^{-1/2}/C$,  or equivalently $\Re \lambda \ll |\xi|$ and $\Re \lambda \sim |\xi|$, the former treated as in the pure imaginary case and the latter by a separate, somewhat simpler argument.}  By symmetry of the eigenvalue equations under complex conjugation, it is sufficient to take $\breve \lambda=i\breve \tau$, $\breve \tau\ge 0$, or $\breve \tau=1-\breve \xi^2$, $-1\le \breve \xi \le 1$.

\subsection{Symbolic preparations}
For this part of the analysis, it is most convenient to go back to the basic form \eqref{firstorder}--\eqref{evans_ode}. Introducing parabolic coordinates \eqref{prescale}, we expand $\mat{A}$ in powers of $\breve r^{1/2}$ as $\mat{A}=\breve r \mat{A}_1 + \breve r^{1/2}\mat{A}_{1/2} + \mat{A}_0$, where 
\begin{equation}
\label{A1}
\mat{A}_1 =
\left(\begin{array}{c|cccccc}
-\hat \rho \breve \lambda & 
0 & 0 & 0 & -\hat \rho^2 \breve \lambda & 0 & 0 \\ \hline
0& 0& 0 & 0 & \breve \lambda \hat \rho + \mu \breve \xi^2 & 0 & 0 \\
0& 0& 0 & 0 & 0 & \breve \lambda \hat \rho + \tilde \mu \breve \xi^2  & 0\\
0& 0& 0 & 0 & 0 & 0& \breve \lambda \hat \rho + \nu \breve \xi^2 \\
0& 0& 0 & 0 & 0 & 0 & 0\\
0& 0& 0 & 0 & 0 & 0 & 0\\
0& 0& 0 & 0 & 0 & 0 & 0\\
\end{array}\right),
\end{equation}
\begin{equation} \label{A1/2}
\mat{A}_{1/2}  =
\begin{pmatrix}
0& 0& 0 & 0 & 0 & i\breve \xi\hat \rho & 0\\
0& 0& 0 & 0 & 0 & 0 & 0\\
\mi\breve\xi\hat p& 0& 0 & 0 & -\mi\breve \xi\hat\rho\hat p & 0 & \mi\breve \xi \Gamma \hat \rho\\
0& 0& 0 & 0 & 0 &\mi\breve \xi f(\hat U,\hat U_{x_1}) & 0\\
0& 0& 0 & 0 & 0 & -\mi\breve \xi \tilde \eta \tilde \mu^{-1} & 0\\
0& 0& 0 & 0 &-\mi\breve \xi \tilde \eta \mu^{-1} & 0 & 0\\
0& 0& 0 & 0 & 0 & 0 & 0\\
\end{pmatrix},
\end{equation}
and
\begin{equation} \label{A0}
\mat{A}_{0}  =
\begin{pmatrix}
0& 0& 0 & 0 & 0 & 0 & 0\\
-\hat u_{x_1}& 0& 0 & 0 & 0 & 0 & 0\\
0 & 0& 0 & 0 & 0 & 0 & 0\\
-\hat e_{x_1}& -\hat u_{x_1}& 0 & 0 & 0 & 0 & 0\\
-\tilde\mu^{-1}\hat p&  \tilde \mu^{-1}& 0 & 0 &  \tilde \mu^{-1}(1-\hat\rho\hat p) & 0 & \tilde\mu^{-1}\Gamma\hat\rho\\
0& 0&  \mu^{-1} & 0 &  0 & \mu^{-1} & 0\\
0& 0& 0 & \nu^{-1} & \nu^{-1} g(\hat U,\hat U_{x_1}) & 0 & \nu^{-1}\\
\end{pmatrix}.
\end{equation}
Noting that $\mat{A}_1$ is upper block-triangular, with ($1\times 1$)
upper diagonal block $-\hat \rho \breve \lambda$ 
and ($6\times 6$) lower diagonal block 
\[
\alpha:=
\begin{pmatrix}
0& 0 & 0 & \breve \lambda \hat \rho + \mu \breve \xi^2 & 0 & 0 \\
0& 0 & 0 & 0 & \breve \lambda \hat \rho + \tilde \mu \breve \xi^2  & 0\\
0& 0 & 0 & 0 & 0& \breve \lambda \hat \rho + \nu \breve \xi^2 \\
0& 0 & 0 & 0 & 0 & 0\\
0& 0 & 0 & 0 & 0 & 0\\
0& 0 & 0 & 0 & 0 & 0\\
\end{pmatrix},
\]
having all zero eigenvalues, we block-diagonalize by the upper block-diagonal transformation $\vec{W}:=\mat{S}\vec{X}$, where $\mat{S}=\begin{pmatrix}1 & \theta \\ 0 & \mat{I}_6\\ \end{pmatrix}$, 
$ \mat{S}^{-1}:=\begin{pmatrix} 1 & -\theta \\ 0 & \mat{I}_6\\ \end{pmatrix}$, with
$\theta:= -\breve \lambda (0, 0,0,\hat \rho^2, 0, 0) (\breve \lambda\hat\rho \mat{I}_6 + \alpha)^{-1}$.
Since $\alpha^2=0$, we have $(\breve \lambda\hat\rho \mat{I}_6+\alpha)^{-1}= \breve \lambda^{-1}\hat\rho^{-1}(\mat{I}_6 - \breve \lambda^{-1}\breve\rho^{-1} \alpha)$, and that $(0, 0,0,\hat \rho^2, 0, 0)\alpha=0$, and hence we obtain $\theta= (0,0,0, -\hat \rho, 0, 0)$ and
\[
\mat{S}^{-1}\mat{S}'=
\begin{pmatrix}
0 & \theta' \\
0 & 0\\
\end{pmatrix}
=
\left(\begin{array}{c|cccccc}
0 & 0 & 0 & 0 & - \hat \rho_{x_1} & 0 & 0 \\ \hline
0 & 0 & 0 & 0 & 0 & 0 & 0 \\
0 & 0 & 0 & 0 & 0 & 0 & 0 \\
0 & 0 & 0 & 0 & 0 & 0 & 0 \\
0 & 0 & 0 & 0 & 0 & 0 & 0 \\
0 & 0 & 0 & 0 & 0 & 0 & 0 \\
0 & 0 & 0 & 0 & 0 & 0 & 0 \\
\end{array}\right)\,,
\]
and thus $\vec{X}'=\mat{B}\vec{X}$, 
$
\mat{B}= \mat{S}^{-1}\mat{A}\mat{S} - \mat{S}^{-1}\mat{S}'= \breve r \mat{B}_1 + \breve r^{1/2}\mat{B}_{1/2}+ \mat{B}_0,
$
where
\begin{equation} \label{B1}
\mat{B}_1 =
\left(\begin{array}{c|cccccc}
-\hat \rho \breve \lambda & 
0 & 0 & 0 &  0 & 0 & 0 \\ \hline
0& 0& 0 & 0 & \breve \lambda \hat \rho + \mu \breve \xi^2 & 0 & 0 \\
0& 0& 0 & 0 & 0 & \breve \lambda \hat \rho + \tilde \mu \breve \xi^2  & 0\\
0& 0& 0 & 0 & 0 & 0& \breve \lambda \hat \rho + \nu \breve \xi^2 \\
0& 0& 0 & 0 & 0 & 0 & 0\\
0& 0& 0 & 0 & 0 & 0 & 0\\
0& 0& 0 & 0 & 0 & 0 & 0\\
\end{array}\right),
\end{equation}
\begin{equation} \label{B1/2}
\mat{B}_{1/2} =
\begin{pmatrix}
0& 0& 0 & 0 & 0 & \mi\breve \xi\hat\rho(1-\tilde\mu^{-1}\tilde\eta) & 0\\
0& 0& 0 & 0 & 0 & 0 & 0\\
-\mi\breve\xi\hat p& 0& 0 & 0 & 0 & 0 &\mi\breve \xi \Gamma \hat \rho\\
0& 0& 0 & 0 & 0 & \mi\breve \xi f(\hat U,\hat U_{x_1}) & 0\\
0& 0& 0 & 0 & 0 & -\mi\breve \xi \tilde \eta \tilde \mu^{-1} & 0\\
0& 0& 0 & 0 &-\mi\breve \xi \tilde \eta \mu^{-1} & 0 & 0\\
0& 0& 0 & 0 & 0 & 0 & 0\\
\end{pmatrix},
\end{equation}
and
\begin{equation}
\label{B0}
\mat{B}_{0} =
\begin{pmatrix}
-\tilde\mu^{-1}\hat p\hat\rho & \tilde\mu^{-1}\hat\rho & 0 & 0 & \tilde\mu^{-1}\hat\rho + \hat\rho_{x_1}& 0 & \tilde\mu^{-1}\Gamma\hat\rho^2 \\
-\hat u_{x_1} & 0 & 0 & 0& \hat\rho\hat u_{x_1} & 0 & 0 \\
0 & 0 & 0 & 0 & 0 & 0 & 0 \\
-\hat e_{x_1} & -\hat u_{x_1} & 0 & 0 & \hat\rho\hat{e}_{x_1} & 0 & 0 \\
-\tilde\mu^{-1}\hat p& \tilde\mu^{-1}& 0 & 0 & \tilde\mu^{-1} & 0 & \tilde\mu^{-1}\Gamma\hat \rho \\
0 & 0 & \mu^{-1} & 0 & 0 & \mu^{-1} & 0 \\
0 & 0 & 0 & \nu^{-1} & \nu^{-1} g(\hat U,\hat U_{x_1}) & 0 & \nu^{-1}
\end{pmatrix}\,.
\end{equation}
Making the ``balancing transformation'' $\vec{X}=\mat{T}\vec{Y}$,
$\mat{T}=\begin{pmatrix} \mat{I}_4 & 0 \\
0 &  \breve r^{-1/2} \mat{I}_3  \end{pmatrix}$,
we obtain 
$
\vec{Y}'=\mat{C}\vec{Y}\,,
$
$ \mat{C}= \mat{T}^{-1}\mat{B}\mat{T} $,
$\mat{C}=\breve r^{1/2}\mat{C}_{1/2}+\mat{C}_0+\breve r^{-1/2}\mat{C}_{-1/2}$, with
\begin{equation}
\label{cC}
\mat{C}_{1/2}(x_1, \breve\xi,\breve r) = \left(\begin{array}{c|cccccc}
-\hat \rho \breve \lambda \breve r^{1/2} & 0 & 0 & 0 &  0 & 0 & 0 \\ \hline
0& 0& 0 & 0 & \breve \lambda \hat \rho + \mu \breve \xi^2 & 0 & 0 \\
-\mi\breve\xi\hat p& 0& 0 & 0 & 0 & \breve \lambda \hat \rho + \tilde \mu \breve \xi^2  & 0\\
0& 0& 0 & 0 & 0 & 0& \breve \lambda \hat \rho + \nu \breve \xi^2 \\
-\tilde\mu^{-1}\hat p&  \tilde \mu^{-1} & 0 & 0 
& 0 & -\mi\breve \xi \tilde \eta \tilde \mu^{-1} & 0\\
0& 0&  \mu^{-1} & 0 & -\mi\breve \xi \tilde \eta \mu^{-1} & 0 & 0\\
0 & 0& 0 & \nu^{-1} & 0 & 0 & 0\\
\end{array}\right)\,,
\end{equation}
\begin{equation} \label{C0}
\mat{C}_0(x_1, \breve\xi) =
\begin{pmatrix}
-\tilde\mu^{-1}\hat p\hat\rho & \tilde\mu^{-1}\hat\rho & 0 & 0 & 0 & \mi\breve\xi\hat\rho(1-\tilde\mu^{-1}\tilde\eta) & 0 \\
-\hat u_{x_1} & 0 & 0 & 0 & 0 & 0 & 0 \\ 
0 & 0 & 0 & 0 & 0 & 0& \mi\breve\xi\Gamma\hat\rho \\
-\hat e_{x_1} & -\hat u_{x_1} & 0 & 0 &  0 & \mi\breve \xi f(\hat U,\hat U_{x_1}) & 0 \\
0 & 0 & 0 & 0 & \tilde\mu^{-1} & 0 & \tilde\mu^{-1}\Gamma\hat\rho \\
0 & 0 & 0 & 0 & 0 & \mu^{-1} & 0 \\ 
0 & 0 & 0 & 0 & \nu^{-1} g(\hat U,\hat U_{x_1}) & 0 & \nu^{-1}  
\end{pmatrix},
\end{equation}
\begin{equation}
\mat{C}_{-1/2}(x_1, \breve\xi) =
\begin{pmatrix}
0  & 0 & 0 & 0 & \tilde\mu^{-1}\hat\rho+\hat\rho_{x_1} & 0 & \tilde\mu^{-1}\Gamma\hat\rho^2 \\
0 & 0 & 0 & 0 & \hat\rho\hat u_{x_1} & 0 & 0 \\
0&0&0&0&0&0&0\\
0 & 0 & 0 & 0 & \hat\rho\hat{e}_{x_1} & 0 & 0 \\
0&0&0&0&0&0&0\\
0&0&0&0&0&0&0\\ 
0&0&0&0&0&0&0
\end{pmatrix}.
\end{equation}

Making the final transformation $\vec{Y}=\mat{U}\vec{Z}$,
\begin{equation}
\label{psi}
\mat{U}=\begin{pmatrix} 1 & 0 \\
\psi &   \mat{I}_6  \end{pmatrix}, \quad
\psi=-(\hat \rho \breve \lambda \breve r^{1/2}\mat{I}_6 + \beta)^{-1}q,
\end{equation}
where
$q:= ( 0, -\mi\breve\xi\hat p, 0, -\tilde\mu^{-1}\hat p, 0, 0)^T$ and
\begin{equation}\label{beta}
\beta( \breve\lambda,  \breve \xi):=
\begin{pmatrix}
 0& 0 & 0 & \breve \lambda \hat \rho + \mu \breve \xi^2 & 0 & 0 \\
 0& 0 & 0 & 0 & \breve \lambda \hat \rho + \tilde \mu \breve \xi^2  & 0\\
 0& 0 & 0 & 0 & 0& \breve \lambda \hat \rho + \nu \breve \xi^2 \\
  \tilde \mu^{-1}& 0 & 0 & 0 & -\mi\breve \xi \tilde \eta \tilde \mu^{-1} & 0\\
 0&  \mu^{-1} & 0 &  -\mi\breve \xi \tilde \eta \mu^{-1} & 0 & 0\\
 0& 0 & \nu^{-1} & 0 & 0 & 0\\
\end{pmatrix},
\end{equation}
we obtain $\vec{Z}'=\mat{D}\vec{Z}$, with $ \mat{D}= \mat{U}^{-1}\mat{C}\mat{U} - \mat{U}^{-1}\mat{U}' $. 
Note, $\mat{U}^{-1}\mat{U}'=\begin{pmatrix} 0 & 0\\ \psi' & 0\end{pmatrix}$, where 
\beq
\label{psi'}
\psi'=-( \hat \rho \breve \lambda \breve r^{1/2}\mat{I}_6 + \beta)^{-1}q_{x_1}
+(\hat \rho \breve \lambda \breve r^{1/2}\mat{I}_6 + \beta)^{-1}
(\hat \rho_{x_1} \breve \lambda \breve r^{1/2}\mat{I}_6 + \beta_{x_1})
(\hat \rho \breve \lambda \breve r^{1/2}\mat{I}_6 + \beta)^{-1} q.
\eeq
Thus, $\mat{D}=\breve r^{1/2}\mat{D}_{1/2}+\mat{D}_0+\breve r^{-1/2}\mat{D}_{-1/2}$, where
\begin{equation}
\label{cD}
\mat{D}_{1/2}(x_1, \breve\xi,\breve r) = \left(\begin{array}{c|cccccc}
-\hat \rho \breve \lambda \breve r^{1/2} & 0 & 0 & 0 &  0 & 0 & 0 \\ \hline
0& 0& 0 & 0 & \breve \lambda \hat \rho + \mu \breve \xi^2 & 0 & 0 \\
0& 0& 0 & 0 & 0 & \breve \lambda \hat \rho + \tilde \mu \breve \xi^2 & 0\\
0& 0& 0 & 0 & 0 & 0& \breve \lambda \hat \rho + \nu \breve \xi^2 \\
0&  \tilde \mu^{-1}& 0 & 0 & 0 & -\mi\breve \xi \tilde \eta \tilde \mu^{-1} & 0\\
0& 0&  \mu^{-1} & 0 &  -\mi\breve \xi \tilde \eta \mu^{-1} & 0 & 0\\
0& 0& 0 & \nu^{-1} & 0 & 0 & 0\\
\end{array}\right)\,,\\
\end{equation}
and 
\begin{align} \label{D0}
\mat{D}_0(x_1, \breve\xi,\breve r) & =
\mat{U}^{-1}\mat{C}_0(x_1,\breve \xi)\mat{U} -\mat{U}^{-1}\mat{U}'\,, \\
\mat{D}_{-1/2}(x_1, \breve\xi,\breve r) & =
\mat{U}^{-1}\mat{C}_{-1/2}(x_1,\breve \xi)\mat{U}\,.
\end{align}

As established in Lemma \ref{betalem} below, the eigenvalues of $\beta$ have real parts uniformly bounded from zero, hence, recalling that $\breve \lambda=\mi\breve \tau$ is pure imaginary, we also find that the matrix $( \hat \rho \breve \lambda \breve r^{1/2}I_6 + \beta)^{-1}$ is uniformly bounded, hence $\psi$ and $\psi'$ are uniformly bounded.  In practice we will estimate these numerically.  Likewise, we see that the  center-stable subspace $\Sigma_\sm$ and the unstable subspace $\Sigma_\sp$ of $\mat{D}_{1/2}$ must have a uniform spectral gap bounded from zero, given by the minimum real part of the eigenvalues associated with the unstable subspace of $\beta$.


\subsection{Numerical implementation}

With these preparations, we are now ready to obtain bounds using the ``tracking'' procedure of Appendix \ref{tracking}.  We describe the plan of attack: By block diagonalizing $\mat{D}_{1/2}$ and noting an $\breve r^{1/2}$-dependent separation between the numerical ranges of the stable and unstable blocks, we can use Lemma \ref{tracklem} to obtain high-frequency bounds in $\breve r$ for each fixed $\breve \xi$.


\subsubsection{Tracking Setup}  Since the vector space $\mathbb{C}^6$ an be split into complementary $\beta$-invariant subspaces corresponding to its stable and unstable subspaces, we can form a smooth block matrix $R$ of column vectors (see Appendix \ref{blockdiag}) corresponding to bases of those complementary subspaces together with the $(1,1)$ block of $\mat{D}_{1/2}$, which contributes a purely imaginary eigenvalue to the spectrum of $\mat{D}_{1/2}$ (since we are restricting to the case that $\breve\lambda=\mi\breve\tau$).  By associating this center space with $\mat{N}_-$ and setting $L:= R^{-1}$, we have the block form
\begin{equation}
\label{eq:block-form}
L \, \mat{D}_{1/2} \, R = \mat{N} := \begin{pmatrix} \mat{N}_- & 0\\ 0 & \mat{N}_+\end{pmatrix},
\end{equation}
where $\dim \mat{N}_- = 4$ and $\dim \mat{N}_+ = 3$.

Furthermore, by setting $\vec{Z} = R \vec{V}$, we have that $\vec{V}' = (L D R - L R') \vec{V}$, where
\[
L D R - L R' = \breve r^{1/2} \mat{N} + \underbrace{L \, \mat{U}^{-1} (\mat{C}_0 + \breve r^{-1/2} \mat{C}_{-1/2}) \mat{U} \, R - L\, \mat{U}^{-1}\mat{U}'\,R - L R'.}_{\displaystyle\Theta(\breve r,\breve \xi)}
\]
By separating out the asymptotic rates, we can write
\[
\Theta(\breve r,\breve \xi) = \Theta^{(0)}(\breve r,\breve \xi) + \breve r^{-1/2}\Theta^{(-1/2)}(\breve r,\breve \xi),
\]
where
\begin{align*}
\Theta^{(0)}(\breve r,\breve \xi) &= L \, \mat{U}^{-1} \mat{C}_0 \mat{U} \, R - L\, \mat{U}^{-1}\mat{U}'\,R - L R' \\
\Theta^{(-1/2)}(\breve r,\breve \xi) &= L \, \mat{U}^{-1} \mat{C}_{-1/2} \mat{U} \, R.
\end{align*}
Following the block structure of $\mat{N}$, each $\Theta^{(j)}(\breve r,\breve \xi)$ can be further written as
\begin{equation}
\label{theta-bits}
\Theta^{(j)}(\breve r,\breve \xi) = \begin{pmatrix} \Theta^{(j)}_{--}(\breve r,\breve \xi) &  \Theta^{(j)}_{-+}(\breve r,\breve \xi) \\  \Theta^{(j)}_{+-}(\breve r,\breve \xi) &  \Theta^{(j)}_{++}(\breve r,\breve \xi)\end{pmatrix}.
\end{equation}
The presence of the $\breve r^{1/2}$ coefficient in front of $\mat{N}$ results in a growing numerical gap $\delta=\breve \delta \breve r^{1/2}$ as $\breve r$ gets larger.  As we show below, each of the blocks \eqref{theta-bits} are uniformly bounded in $\breve r$ for each fixed $\breve\xi\in[0,1]$, and thus we are able to take $\breve r$ large enough that tracking condition \eqref{rootcond} given in Lemma \ref{tracklem} is satisfied and high-frequency bounds are established.

\subsubsection{Uniform Tracking Bounds}  Assume that $\breve\xi\in[0,1]$ is fixed.  The tracking condition \eqref{rootcond} given in Lemma \ref{tracklem} takes the form
\begin{equation}
\label{eq:track1}
\breve r^{1/2}>\frac{\norm{\Theta^{(0)}_{\sm\sm}}+\norm{\Theta^{(0)}_{\sp\sp}}+\breve r^{-1/2}(\norm{\Theta^{(-1/2)}_{\sm\sm}}+\norm{\Theta^{(-1/2)}_{\sp\sp}})+2\sqrt{H(\breve r,\breve \xi)}}{\breve\delta},
\end{equation}
where
\[
H(\breve r,\breve \xi)=(\norm{\Theta^{(0)}_{\sp\sm}}+\breve r^{-1/2}\norm{\Theta^{(-1/2)}_{\sp\sm}})(\norm{\Theta^{(0)}_{\sm\sp}}+\breve r^{-1/2}\norm{\Theta^{(-1/2)}_{\sp\sm}}),
\]
and separation between the numerical ranges of the blocks as described in \eqref{numrange} reduces to
\[
\breve\delta = \min \sigma(\Re \mat{N_+})-\max \sigma(\Re \mat{N_-}).
\]
We can show that the right-hand size of \eqref{eq:track1} is uniformly bounded in $\breve r$ while the left-hand size grows without bound.  Thus, for sufficiently large $\breve r$, the inequality will be satisfied.

By replacing the right-hand side of \eqref{eq:track1} with uniform bounds, we can compute a value $\breve r^*$ which guarantees that the tracking condition \eqref{eq:track1} is satisfied whenever $\breve r \geq \breve r^*$.  We find $\breve r^*$ by setting $y=\sqrt{\breve r^*}$ and solving
\begin{equation}
\label{quintic-tracking}
y=\frac{a+b/y+ 2\sqrt{(c+d/y)(e+f/y)}}{\breve\delta},
\end{equation}
where
\begin{equation}
\label{positive-coefficients}
\begin{aligned}
a &:= \sup_{\breve r} \norm{\Theta^{(0)}_{\sm\sm}(\breve r,\breve\xi)} + \norm{\Theta^{(0)}_{\sp\sp}(\breve r,\breve\xi)}, \\
b &:=\sup_{\breve r} \norm{\Theta^{(-1/2)}_{\sm\sm}(\breve r,\breve\xi)} + \norm{\Theta^{(-1/2)}_{\sp\sp}(\breve r,\breve\xi)}, \\
c &:=\sup_{\breve r} \norm{\Theta^{(0)}_{\sm\sp}(\breve r,\breve\xi)}, \\
d &:=\sup_{\breve r} \norm{\Theta^{(0)}_{\sp\sm}(\breve r,\breve\xi)}, \\
e &:=\sup_{\breve r} \norm{\Theta^{(-1/2)}_{\sm\sp}(\breve r,\breve\xi)}, \\
f &:=\sup_{\breve r} \norm{\Theta^{(-1/2)}_{\sp\sm}(\breve r,\breve\xi)}.
\end{aligned}
\end{equation}
Further simplifying \eqref{quintic-tracking} gives the quartic polynomial equation 
\begin{equation}
\label{eq:quart2}
\breve\delta^2 y^4-2a\breve\delta y^3+(a^2-b\breve\delta-ce)y^2+(2ab-de-cf)y+(b^2-df)=0.
\end{equation}
Thus, to obtain our uniform tracking bound $\breve r^* := \breve r^*(\breve\xi)$, we simply find the largest real root of \eqref{eq:quart2} and square it.  Therefore we have established the following theorem:
\begin{theorem}[Uniform Tracking Bounds]
\label{thm:strong-bounds}
Let $\breve\xi\in[0,1]$ be fixed.  The tracking bound, namely the largest value $\breve r^*$ of $\breve r$ above which there exist no imaginary eigenvalues, is bounded above by the square of the largest real root of \eqref{eq:quart2}. 
\end{theorem}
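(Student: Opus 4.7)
The plan is to apply the tracking lemma (Lemma~\ref{tracklem}) from the appendix directly to the block-diagonalized system
\[
\vec{V}' = \bigl(\breve r^{1/2}\mat{N} + \Theta^{(0)}(\breve r,\breve\xi) + \breve r^{-1/2}\Theta^{(-1/2)}(\breve r,\breve\xi)\bigr)\vec{V}
\]
constructed above, with $\mat{N} = \diag(\mat{N}_-,\mat{N}_+)$. Under that lemma, the absence of a nontrivial bounded solution---equivalently, of a pure imaginary eigenvalue of the associated coefficient pencil---is guaranteed whenever the gap $\breve\delta\,\breve r^{1/2}$ between the numerical ranges of $\breve r^{1/2}\mat{N}_+$ and $\breve r^{1/2}\mat{N}_-$ dominates the off-diagonal coupling produced by $\Theta$, which is precisely the content of \eqref{eq:track1}. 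The task is therefore to show that \eqref{eq:track1} holds whenever $\breve r$ exceeds $y_*^2$, where $y_*$ is the largest real root of \eqref{eq:quart2}.

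First, I would verify that the six suprema $a,b,c,d,e,f$ in \eqref{positive-coefficients} are finite for every fixed $\breve\xi\in[0,1]$. The delicate point is that $\Theta^{(0)}$ and $\Theta^{(-1/2)}$ are built from the transformation $\mat{U}$ of \eqref{psi} and its derivative \eqref{psi'}, both of which contain the resolvent $(\hat\rho\breve\lambda\breve r^{1/2}\mat{I}_6 + \beta)^{-1}$. Because $\breve\lambda = \mi\breve\tau$ is pure imaginary and Lemma~\ref{betalem} (stated below) supplies a uniform bound from zero on the real parts of the eigenvalues of $\beta$, this resolvent has norm uniformly bounded on $\breve r\ge 0$; combined with the exponential decay of $\hat\rho_{x_1}, \hat u_{x_1}, \hat e_{x_1}$ from Lemma~\ref{profdecay} and the smoothness of the block-diagonalizer $R$ from Appendix~\ref{blockdiag}, this yields the required uniform boundedness. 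The spectral gap $\breve\delta = \min\sigma(\Re\mat{N}_+) - \max\sigma(\Re\mat{N}_-)$ is likewise strictly positive: by construction $\mat{N}_-$ collects the purely imaginary eigenvalue $-\hat\rho\breve\lambda$ together with the stable eigenvalues of $\beta$, $\mat{N}_+$ collects the strictly unstable eigenvalues of $\beta$, and Lemma~\ref{betalem} provides the uniform separation.

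Second, with those bounds in hand, inequality \eqref{eq:track1} is majorized by the scalar inequality $\breve\delta\, y \ge a + b/y + 2\sqrt{(c+d/y)(e+f/y)}$ in $y := \breve r^{1/2}$. Isolating the square root, squaring, and clearing $y^2$ yields the quartic \eqref{eq:quart2}. Because the left side of \eqref{eq:track1} grows linearly in $y$ while the right side approaches the finite limit $(a + 2\sqrt{ce})/\breve\delta$ as $y\to\infty$, the inequality is eventually satisfied; hence it holds for all $y$ strictly exceeding the largest real root $y_*$ of the quartic, so that setting $\breve r^* := y_*^2$ gives the asserted bound.

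The main obstacle is the first ingredient---the uniform boundedness in $\breve r$ of the six block norms defining $a,\dots,f$. Everything else (exponential decay in $x_1$, continuous dependence on $\breve\xi\in[0,1]$, smoothness of $R$) is standard, but the uniform control of $(\hat\rho\breve\lambda\breve r^{1/2}\mat{I}_6+\beta)^{-1}$ is what makes the whole scheme work; it is essentially the pure-imaginary restriction $\breve\lambda=\mi\breve\tau$ coupled with the spectral information in Lemma~\ref{betalem} that prevents the resolvent from blowing up with $\breve r$. Once that uniform bound is established, the passage from \eqref{eq:track1} to \eqref{eq:quart2} is a mechanical algebraic manipulation and Lemma~\ref{tracklem} does the rest.
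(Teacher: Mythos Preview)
Your proposal is correct and follows essentially the same approach as the paper: apply the tracking lemma (Lemma~\ref{tracklem}) to the block-diagonalized system to obtain \eqref{eq:track1}, replace the $\breve r$-dependent block norms by their suprema $a,\dots,f$, and reduce algebraically to the quartic \eqref{eq:quart2}. The one noteworthy difference is in how the finiteness of the suprema is handled. You supply a direct argument---pure-imaginary $\breve\lambda$ together with the uniform hyperbolicity of $\beta$ from Lemma~\ref{betalem} forces the resolvent $(\hat\rho\breve\lambda\breve r^{1/2}\mat{I}_6+\beta)^{-1}$, hence $\psi$ and $\psi'$, to be uniformly bounded in $\breve r$, while $L,R,R'$ carry no $\breve r$-dependence at all---whereas the paper simply asserts boundedness (``We can show that\dots'') and then, in the Remark following the theorem, flags it as a computational caveat to be resolved by the crude bound $\breve r^*_{\mathrm{crude}}$, which compactifies the domain over which the suprema are subsequently evaluated numerically. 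Your direct argument is valid and is in substance the content of Lemma~\ref{inhom} in the appendix; it makes the theorem self-contained at the cost of leaving the numerical evaluation of $a,\dots,f$ over the unbounded range $\breve r\ge 0$ as a separate issue.
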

By inverting the transformation \eqref{prescale}--\eqref{pradius}, we have that $\lambda = i\breve r (1-\breve\xi^2)$ and $\xi = \breve r^{1/2} \breve\xi$.  From Theorem \ref{thm:strong-bounds} we find that there are no purely imaginary eigenvalues $\lambda$ of \eqref{linearized} satisfying
\begin{equation}
\label{hfbound}
|\lambda|\geq r^*(u_\sp,\breve\xi) := \breve r^*(\breve \xi) (1-\breve\xi^2),
\end{equation}
where $\xi = \breve r^*(u_\sp, \breve \xi)^{1/2} \breve\xi$.  
Thus, once the high-frequency bounds $\breve r^* := \breve r^*(\breve \xi)$ are established, 
we have reduced our investigations to a compact parameter domain in $\xi, \lambda$.

\begin{remark}
The caveat to Theorem \ref{thm:strong-bounds}, computationally speaking, is that one would need to prove that the coefficients in \eqref{positive-coefficients} are bounded and numerically tractable in order to securely claim high-frequency bounds.  When we plot the values for varying $\breve r$ (across a range of fixed $\breve \xi$), they seem like well-behaved functions that decay for large $\breve r$.  However, without further analysis, we cannot be certain that the functions do not blow up outside of our computational domain.  Therefore, below, we compactify the computational region by a more crude tracking bound $\breve r^*_{crude}$ that eliminates the need to investigate for $\breve r \geq \breve r^*_{crude}$.  In other words, we first show that admissible values of $\breve r$ are bounded above by $\breve r^*_{crude}$, then compute the coefficients in \eqref{positive-coefficients} by taking the suprema on the domain $0 \leq \breve r \leq \breve r^*_{crude}$ and use \eqref{eq:quart2} to establish the vastly sharper uniform tracking bound $\breve r^*$ defined above.
\end{remark}


\subsubsection{Crude Bounds}  

Using a combination of numerical evaluation and careful analysis, we are able to determine crude high-frequency bounds.  We begin with the realization that the norms on the $\Theta$-blocks in \eqref{tracksys} can be bounded by the norm on $\Theta$.  Specifically, we have that $\|\Theta^{(0)}_{kl} + \breve r^{-1/2} \Theta^{(-1/2)}_{kl}\| \leq \|\Theta\|$ for each $k,l\in \{-,+\}$.  Thus, the right-hand side of the tracking condition in \eqref{rootcond} satisfies
\[
\|\Theta_{\sm\sm}\|+\|\Theta_{\sp\sp}\| + 2\sqrt{\|\Theta_{\sm\sp}\|\|\Theta_{\sp\sm}\|}  \leq 4 \|\Theta\|.
\]
This results in the following corollary to Lemma \ref{tracklem}:
\begin{corollary}[Crude Bounds]
\label{cor:crude}
Let $\breve r_0 > 1$ and $\breve\delta\in[0,1]$ be given.  Then, the crude estimate
\begin{equation}
\label{crude-bounds}
\breve r^*_{crude}(\breve \xi) := \max\left\{ \breve r_0, \sup_{\breve r\geq \breve r_0} \left( \dfrac{4 \|\Theta(\breve r,\breve\xi)\|}{\breve\delta(\breve \xi)} \right)^2 \right\}
\end{equation}
is an upper bound for the modulus of any $\breve r$ corresponding to unstable imaginary eigenvalues.
\end{corollary}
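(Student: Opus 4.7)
The plan is to derive the corollary directly from Lemma \ref{tracklem} by replacing the four individual block norms appearing in the tracking condition \eqref{rootcond} with the single, easier-to-control quantity $\|\Theta\|$, and then apply the contrapositive of the tracking statement.

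First, I would record the sub-multiplicativity/monotonicity facts $\|\Theta^{(0)}_{kl} + \breve r^{-1/2}\Theta^{(-1/2)}_{kl}\| \le \|\Theta\|$ (since the $kl$-block of a matrix has operator norm bounded by the norm of the full matrix, applied here with $\Theta = \Theta^{(0)}+\breve r^{-1/2}\Theta^{(-1/2)}$), together with the elementary inequality $a+b+2\sqrt{cd} \le 4\max\{a,b,c,d\}$ valid for nonnegative reals. Combining these with the identification $\delta = \breve\delta\,\breve r^{1/2}$ of the numerical-range gap shows that the right-hand side of \eqref{rootcond} is bounded above by $4\|\Theta(\breve r,\breve\xi)\|$; hence a sufficient form of the tracking condition is
\[
\breve\delta(\breve\xi)\,\breve r^{1/2} \;>\; 4\|\Theta(\breve r,\breve\xi)\|,
\qquad\text{equivalently}\qquad
\breve r \;>\; \Bigl(\tfrac{4\|\Theta(\breve r,\breve\xi)\|}{\breve\delta(\breve\xi)}\Bigr)^{2}.
\]

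Next, I would argue by contrapositive. Suppose $\breve r$ corresponds to an unstable purely imaginary eigenvalue. Lemma \ref{tracklem} forbids this whenever the (sufficient) tracking inequality just displayed holds, so necessarily either $\breve r < \breve r_0$, in which case $\breve r < \breve r^*_{crude}(\breve\xi)$ immediately from the first slot of the $\max$, or else $\breve r \ge \breve r_0$ and the sufficient inequality fails, giving
\[
\breve r \;\le\; \Bigl(\tfrac{4\|\Theta(\breve r,\breve\xi)\|}{\breve\delta(\breve\xi)}\Bigr)^{2}
\;\le\; \sup_{\breve r' \ge \breve r_0}\Bigl(\tfrac{4\|\Theta(\breve r',\breve\xi)\|}{\breve\delta(\breve\xi)}\Bigr)^{2}
\;\le\; \breve r^*_{crude}(\breve\xi).
\]
Either way, $\breve r \le \breve r^*_{crude}(\breve\xi)$, which is the claim.

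The only genuinely nontrivial step is ensuring that the coarsening $\|\Theta_{\sm\sm}\|+\|\Theta_{\sp\sp}\|+2\sqrt{\|\Theta_{\sm\sp}\|\|\Theta_{\sp\sm}\|} \le 4\|\Theta\|$ is legitimate with respect to the operator norm used in Lemma \ref{tracklem}; this is standard, but it should be stated explicitly so that the cruder estimate is transparently seen to be an upper bound for the sharper block-by-block estimate of \eqref{eq:track1}. No other obstacles arise, since the role of the hypothesis $\breve r_0 > 1$ is simply to permit the passage $\breve r^{-1/2} \le 1$ when folding $\breve r^{-1/2}\Theta^{(-1/2)}$ into $\Theta$, and the hypothesis $\breve\delta \in [0,1]$ plays no essential role beyond matching the normalization of Lemma \ref{tracklem}.
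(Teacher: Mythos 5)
Your proposal is correct and follows essentially the same route as the paper: bound each block norm $\|\Theta_{kl}\|$ by $\|\Theta\|$, so that the right-hand side of the tracking condition \eqref{rootcond} is at most $4\|\Theta\|$, then read off $\breve r > (4\|\Theta\|/\breve\delta)^2$ as a sufficient condition via $\delta=\breve\delta\,\breve r^{1/2}$ and conclude by contraposition together with the $\breve r_0$ cutoff. The only stray remark is your gloss on $\breve r_0>1$ as needed to ``fold'' $\breve r^{-1/2}\Theta^{(-1/2)}$ into $\Theta$ --- no such step is required since $\Theta$ is already defined as $\Theta^{(0)}+\breve r^{-1/2}\Theta^{(-1/2)}$ --- but this does not affect the argument.
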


Thus, our crude bounds are determined by finding uniform bounds on $\|\Theta\|$.  To begin, we observe that
\[
\|\Theta\| \leq \|L\| \|R\| \|\mat{U}\| \|\mat{U}^{-1}\| \left( \|\mat{C}_0\| + \breve r^{-1/2} \| \mat{C}_{-1/2}\|\right) + \| L R'\| + \|L\| \| R \| \| \mat{U}^{-1} \mat{U}'\|.
\]
Since $\|\mat{U}^{-1}\| = \|\mat{U}\|$ and $\| \mat{U}^{-1} \mat{U}'\| = \|\mat{U}'\|= \| \psi_{x_1}\|$, we have
\[
\|\Theta\| \leq \|L\| \|R\| \|\mat{U}\|^2 \left( \|\mat{C}_0\| + \breve r^{-1/2} \| \mat{C}_{-1/2}\|\right) + \| L R'\| + \|L\| \| R \| \| \psi_x\|.
\]
Note that $\|L\|\|R\|$, $\| L R'\|$, $\|\mat{C}_0\|$, $\|\mat{C}_{-1/2}\|$, and $\breve\delta^{-1}(\breve\xi)$ have no $\breve r$ dependence and are readily computable.  Thus, to compute uniform bounds for $\|\Theta\|$, we need the uniform bounds on $\| \mat{U}\|$ and $\|\psi_{x_1}\|$.  A tricky point in our analysis is that both $\| \mat{U}\|$ and $\|\psi_{x_1}\|$ depend on the matrix $( \hat \rho \breve \lambda \breve r^{1/2}\mat{I}_6 + \beta)^{-1}$, which has  $\breve r^{1/2}$ dependence.  Using Lemmas \ref{lem:matrix-norm}, \ref{lem:V}, and \ref{inhom}, we bound $\|\mat{U}\|$ and $\|\psi_{x_1}\|$ numerically by stepping through values of $s\in [0,(1+b)\|\beta\|]$ and $s\in [0,2\|\beta\|]$ in the computation of $M_0$ and $M_1$, respectively.  We note that the bound on $\|\Theta\|$ decays to a constant as $\breve r^{-1/2}$ and is thus uniform.  It remains to assign numerical values to the different terms above.

The following bounds are computed numerically for a monatomic gas ($\Gamma=2/3$) and taken to be uniform over $\breve\xi\in[0,1]$ and over all shock strengths $u_\sp\in[u_*,1]$, where $u_\sp\to 1$ is the transonic limit and $u_\sp\to u_*$ is the infinite Mach number limit (recall $u_* = 1/4$ is defined in \eqref{phys}).  The following $\breve r$-independent quantities are
\[
\|L\| \|R\| \leq 2.425,\quad
\|L R'\| \equiv 1,\quad
\|\mat{C}_0\| \leq 4.868,\quad
\|\mat{C}_{-1/2}\| \leq 8.5440,\quad
\breve\delta^{-1} \leq 1.72.
\]
From Lemma \ref{inhom}$(i)$-$(ii)$, we have $M_0 = 0.8130$ and $M_1 = 2.0057$.  From Lemma \ref{lem:V}, this gives
\[
\|\mat{U}\|^2 = 1 + \dfrac{M_0^2 + M_0 \sqrt{M_0^2 + 4}}{2} \leq 2.2081.
\]
Furthermore, from  Lemma \ref{inhom}$(iii)$, we find the bounds
\[
\sup \left| \dfrac{p_{x_1}}{p} \right| \leq 0.3027 \quad\mbox{and}\quad \sup \left| \dfrac{\rho_{x_1}}{\rho} \right| \leq 0.3370.
\]
Thus, for $\breve r_0 = 40,\!000$, we have that
\[
\|\psi_{x_1}\| \leq \left(0.3027 + (2.0057)(0.3370)\sqrt{1+\dfrac{2}{200}}\right) (0.8130) \leq 0.7984
\]
and
\[
\|\Theta\| \leq (2.425)(2.2081)\left(4.868 + \dfrac{1}{200}8.5440\right) + 1 + (2.425)(0.7984) \leq 29.2313.
\]
Therefore, for a monotomic gas with parameters as given in Section \ref{sec:equations}, we have that
\[
\breve r^*_{crude}(\breve\xi) \leq \max\left\{ 40,\!000,\left( 4 (29.2313)(1.72) \right)^2 \right\}\leq 40,\!446,\quad\forall\breve\xi\in[0,1].
\]
Thus, when $\breve r>\breve r^*_{crude}$ we have that no purely imaginary eigenvalues of \eqref{linearized} can exist.  Having crude bounds allows us to compactify the suprema computed in \eqref{positive-coefficients}, thus establishing the vastly smaller tracking bound in Theorem \ref{thm:strong-bounds}.

\subsubsection{Numerical results}

In Figure \ref{fig:hfb}, we display the high-frequency bounds for monatomic shocks computed as a function of $\breve\xi\in[0,1]$ and $u_\sp\in[u_*,1]$ as given by Theorem \ref{thm:strong-bounds} and \eqref{hfbound}.  These vary from approximately 32 for weak (small-amplitude) shocks to approximately $280$ in the strong-shock (maximal amplitude) limit, with the largest values  occurring for $\breve \xi=0$, corresponding to the one-dimensional case.  The diatomic case looks essentially the same, but with bounds ranging from roughly $32$ to $500$.  These values give an upper bound on our computational region.

\begin{figure}[t]
\begin{center}
\includegraphics[width=16cm]{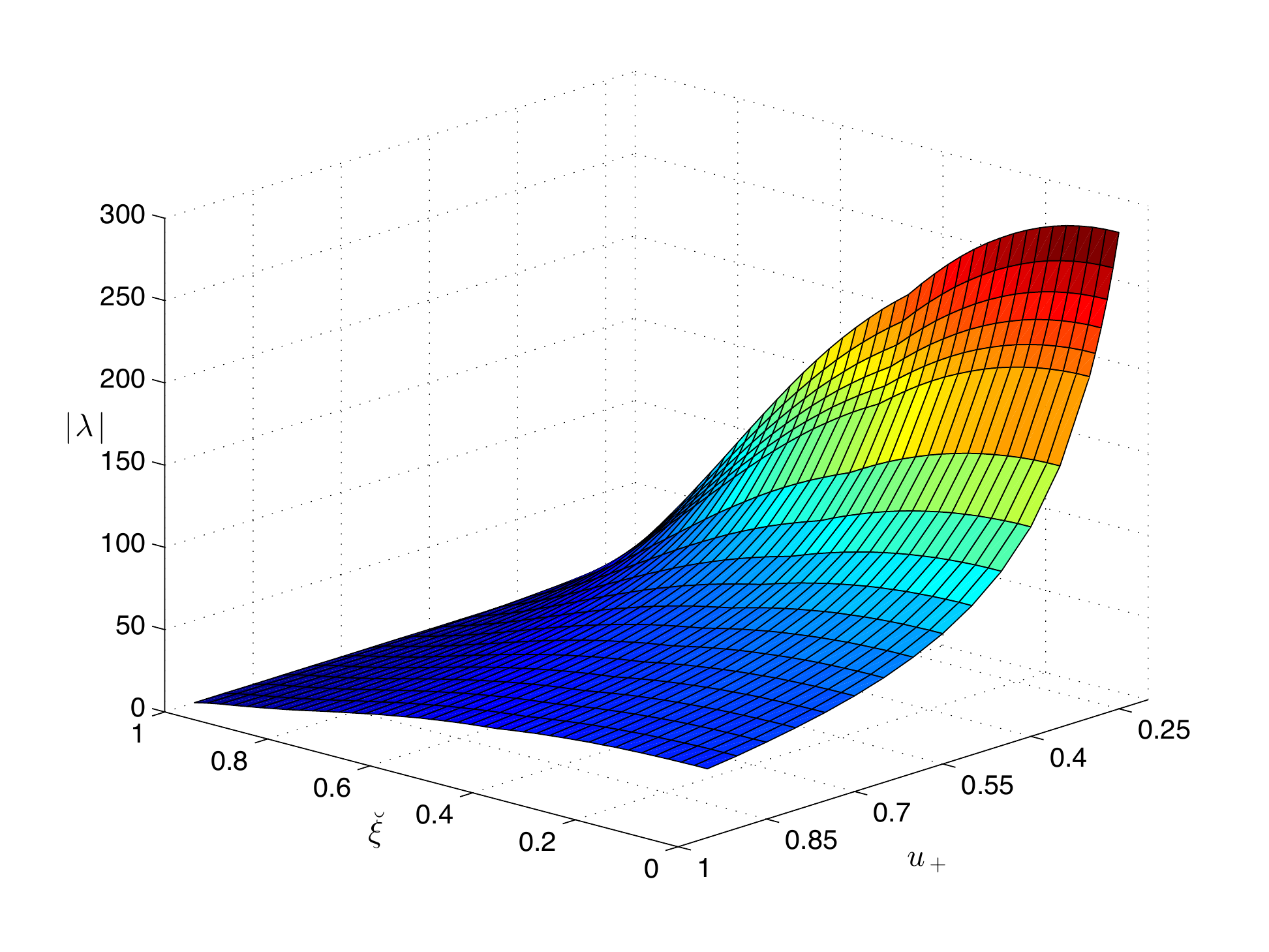}
\end{center}
\caption{Monotone high-frequency bounds $|\lambda|$ as a function of $\breve\xi$ and $u_\sp$.  Notice that the bounds are large for strong shocks when $\breve\xi$ is near zero and relatively small elsewhere, that is when shocks are weak or when $\breve\xi$ is near $1$.}
\label{fig:hfb}
\end{figure}

\begin{remark}
Here, we are blocking the imaginary axis for high frequencies, which will suffice (by a homotopy argument) to capture any unstable zeros as they cross the imaginary axis.  As described below, for bounded frequencies we not only check for zeros on the imaginary axis, but perform winding number computations about semicircular contours in the unstable half plane with bounding diameter along the imaginary axis, counting roots in the interior.  Thus, we detect any such crossing roots not only at the moment that they cross the imaginary axis, but afterward as well, making this approach particularly robust and convenient; in particular, we can be sure that crossings are not missed between mesh points in the computation. 
\end{remark}

\begin{remark}
There are many ways to carry out the block-diagonalization described above.  Here, we followed a naive approach and it turned out to be sufficient.  If necessary, one may use Lyapunov's Lemma (Lemma \ref{lem:lyapunov} below) to effect a further change of coordinates guaranteeing the hypotheses of the quantitative tracking lemma (Lemma \ref{tracklem}) in terms of $\mat{M}_\pm=\Re(\mat{P}_\spm^{1/2}\mat{N}_\spm\mat{P}_\spm^{-1/2}$).  We don't find this to be needed in the present case since $\Re\mat{N}_\spm$ already have a gap; thus we simply set $\mat{M}_\pm=\mat{N}_\pm$.
\end{remark}


\section{Low-frequency study}
\label{numerical}
\subsection{Theoretical background and implementation}

As noted in the introduction, the low-frequency limit of the (viscous) Evans function captures inviscid behavior described in terms of a Lopatinski determinant \cite{ZS}. Indeed, this is one of the regimes in which substantial analytic information can be extracted from the Evans function. In particular, combining the inviscid stability results of Majda \cite{Maj1} and Erpenbeck \cite{Er} for ideal, polytropic gases with the low-frequency analysis of Zumbrun \& Serre \cite{ZS} and Zumbrun \cite{Z3,Z5}, we may immediately conclude that there are no low-frequency zeros except at the origin. In this section, we show that our numerical implementation of the Evans function captures the expected behavior near the origin. This both provides a strong validation for our computations and, when combined with our high-frequency (Section \ref{sec:hfb}) and intermediate-frequency (Section \ref{sec:intermediate}) studies,  allows us to give a complete description of the behavior of the Evans function on its entire domain.  

To capture the Lopatinski determinant, itself a function of frequencies which is homogenous degree one, the low-frequency analysis of the Evans function is based on radial limits. Here, we numerically investigate such limits using the balanced flux formulation of the Evans function (see Section \ref{balanced} and \cite{BHLyZ2}); this alternative Evans function does not vanish at the origin but still detects unstable eigenvalues. Moreover, it has (nonvanishing) radial limits equal to the value of the Lopatinski determinant at the same frequency angle on the ball of radius one.  In principle, we could estimate the region of non-vanishing analytically, but instead we estimate the size of this region by performing a numerical convergence study along rays through the origin: at the point at which we estimate the relative error to be $0.05$, we assume that there are no zeros along this ray.

Finally, to implement this calculation, we initialize the Evans function with bases at $\infty$ by first evolving using Kato's ODE in the angle along a semicircle of radius $0.1$ for the monatomic case and $0.12$ for the diatomic case.  Then we evolve radially toward the origin to $0.001$ using Kato's ODE again (this time in the radial variable), verifying the relative error at the end to be below $0.05$.

\subsection{Estimation of relative error.} A detail of the algorithm is the means of estimating the relative error 
$$
\frac{
|\check D(\check \xi, \check \lambda, \check r)- \check D(\check \xi, \check \lambda, 0)|}
{| \check D(\check \xi, \check \lambda, \check r)|}.
$$
By the analysis of \cite{ZS,Z3}, we know that 
$\check D(\check \xi, \check \lambda, \check r)- \check D(\check \xi, \check \lambda, 0)\sim \check r^s$,
where $s=1$ at points of analyticity and $s=1/2$ at ``glancing points'' where $\check D(\cdot, \cdot, 0)$ exhibits a square-root
singularity.
Hence, for successive mesh points $\check r_{j+1}<\check r_j$, 
$$
\begin{aligned}
	|\Delta \check D_j|&:=
|\check D(\check \xi, \check \lambda, \check r_{j+1})- \check D(\check \xi, \check \lambda, r_j)\\
& \sim s\check r_j^{s-1}|\Delta \check r_j|\\
&\sim
s |\check D(\check \xi, \check \lambda, \check r_j)- \check D(\check \xi, \check \lambda, 0)| \check r_j^{-1} \Delta \check r_j|,
\end{aligned}
$$
where $\Delta \check r_j:=\check r_{j+1}-\check r_j$.
Thus, rearranging, we may estimate
%
%
\begin{equation}
\label{stopcrit}
\frac{
|\check D(\check \xi, \check \lambda, \check r)- \check D(\check \xi, \check \lambda, 0)|}
{| \check D(\check \xi, \check \lambda, \check r)|} \approx
s^{-1} \frac{|\Delta \check D_j|/|D_j|}
{|\Delta \check r_j|/|r_j|} \leq 2 \frac{|\Delta \check D_j|/|D_j|}
{|\Delta \check r_j|/|r_j|}.
\end{equation}
Thus, we can use as a stopping or sufficiency criterion in our numerical convergence study the requirement that the righthand side be sufficiently small, say $\leq 0.05$.

\begin{figure}[t]
\begin{center}$
\begin{array}{cc}
\includegraphics[width=8.25cm]{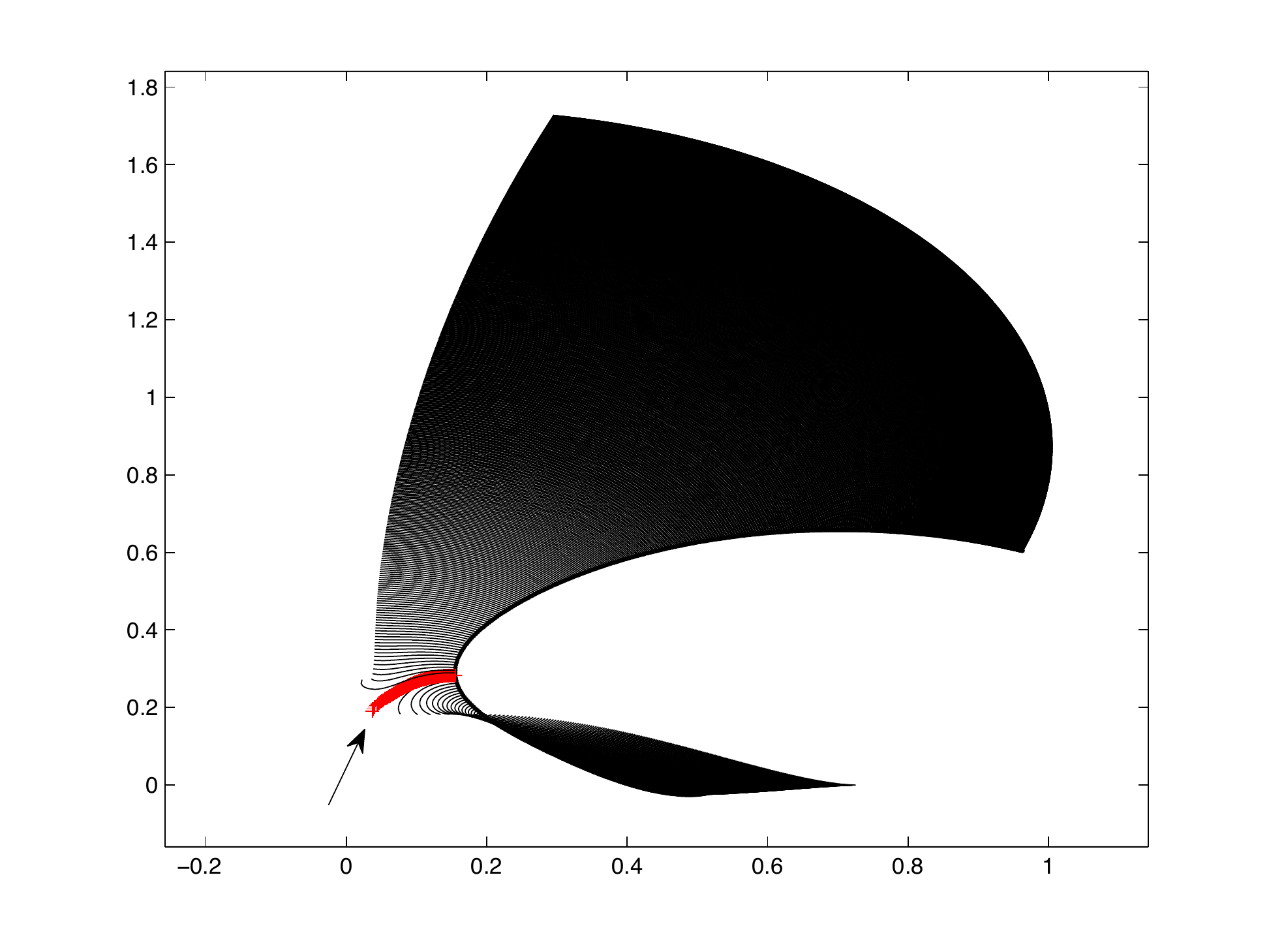} & \includegraphics[width=8.25cm]{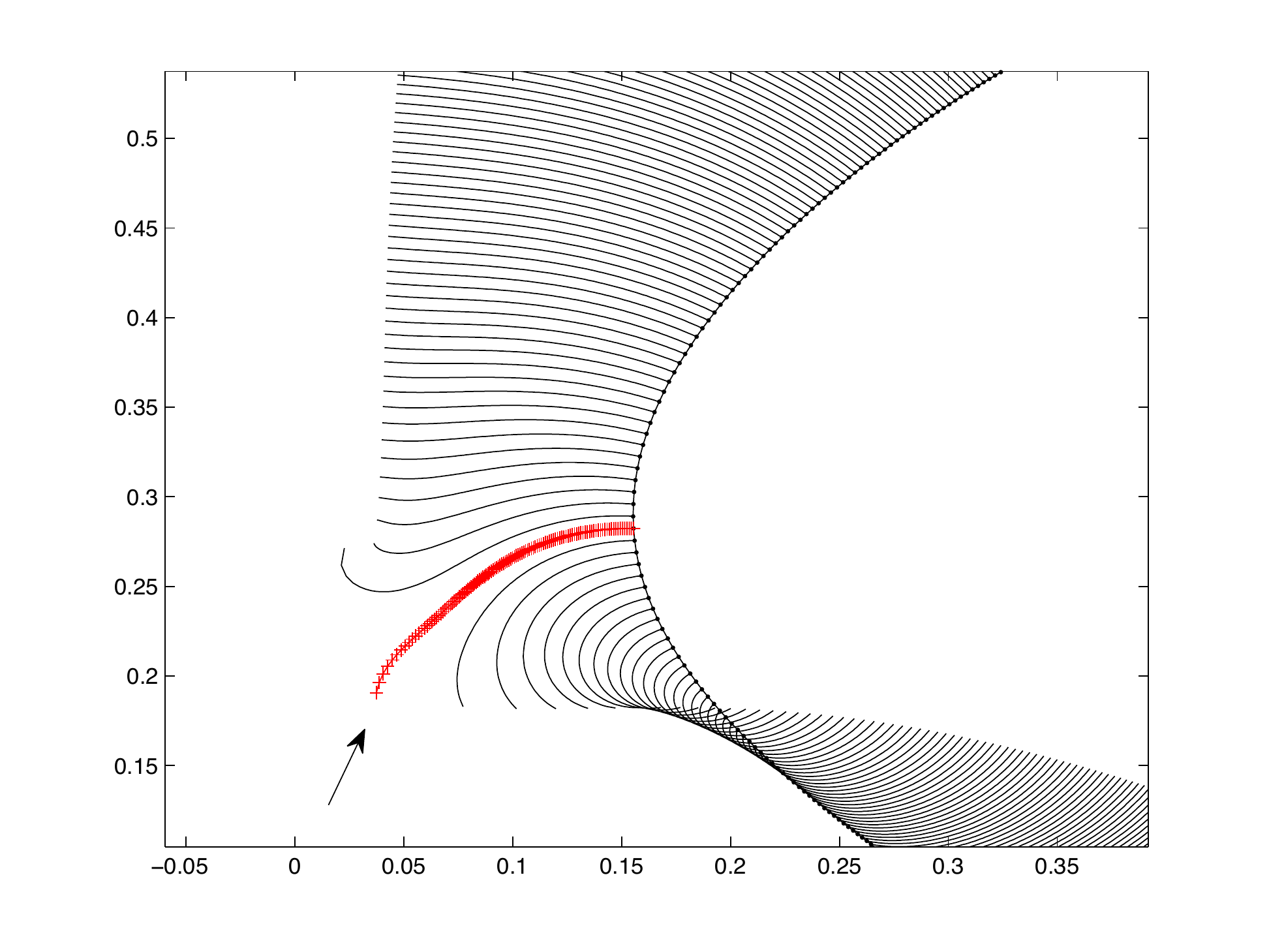} \\ 
$(a)$ & $(b)$
\end{array}$
\end{center}
\caption{Two pictures of the low-frequency ``rib roast'' showing radial limits of the Evans function for the case of a monotone gas.  The corner spoke is given by $\theta_{310} = \frac{310}{1000}\cdot \frac{\pi}{2}\approx 0.4869$.  It is close to the theoretical value of $\theta_* = 0.4867$ given in \eqref{theta-roast}. Note that all values are well bounded away from zero, indicating stability for $|\xi,\lambda|\leq 0.1$.}
\label{rib-roast}
\end{figure}

\subsection{Numerical results, and a useful check}


The limit (with respect to radial variable $r$) of the Evans function is a function of angle corresponding to the Lopatinski condition of the inviscid theory \cite{ZS,BHLyZ2}. This function is known to have a square-root singularity in the angle at certain ``glancing'' points (see, e.g., the appendix by Jenssen \& Lyng that appears in \cite{Z3}). As discussed in \S5.5.1 of \cite{BHLyZ2}, this singularity is preserved by our method of initialization/coordinatization.  

More precisely, consultation of Jenssen \& Lyng's appendix shows that the precise location is given by $ |\Im\lambda|^2= (c^2-|u|^2)|\xi|^2$.  (Note that, by the Lax shock inequalities, this can only occur on the righthand side $x=+\infty$.)  Thus, in polar coordinates, we obtain
\begin{equation}
\label{theta-roast}
\theta_*= \tan^{-1} \sqrt{c_\sp^2-u_\sp^2}\,.
\end{equation}
Then, using the following physical parameters,
\[
e_\sp = \dfrac{u_\sp(\Gamma + 2 - \Gamma u_\sp)}{2\Gamma(\Gamma+1)} = 0.333\,,
\quad 
c_\sp = \sqrt{\Gamma(\Gamma+1)e_\sp} = \sqrt{0.37}\,,
\quad 
u_\sp = 0.3\,, 
\quad 
\Gamma = 2/3\,;
\]
we computed the Evans function by first taking a contour of the form $(r, \cos\theta, i \sin\theta)$, where $\theta$ varied $0\leq\theta\leq \pi/2$.  Then, we computed a series of radial spokes by following along the fixed-angle contours $(r,\cos\theta_k,i \sin\theta_k)$, where $r_1\leq r \leq r_2$.  We use $r_1 = 0.001$ and $r_2=0.1$.  For our computation, we took $n+1$ radial spokes $\theta_k = \frac{k\pi}{2}$, where $k=0,1,2,\ldots,n$; in our case $n=1000$.  For the given parameters,
\[
\theta_* = \tan^{-1}\sqrt{0.28} = 0.4867\,, 
\]
so we examine the behavior near the spoke whose $\theta_k$ value is closest to $\theta_*$. Evidently, for this experiment, the relevant value is $\theta_{310} = \frac{310}{1000}\cdot\frac{\pi}{2}\approx 0.4869$. Figure \ref{rib-roast} shows the output of this experiment; the spoke corresponding to $k=310$ is highlighted in the figure, and the singularity is clearly visible, and its presence at the correct location gives a strong confirmation of the validity of our computations.  Note that all values of $\check D$ are clearly nonzero, indicating stability.

%
%
We carried out similar experiments for both the monatomic gas case $\Gamma=2/3$ and the diatomic gas case, letting $u_\sp$ vary across the strong and weak shock regimes.  Specifically, we computed \eqref{stopcrit} for \eqref{uprange_monatomic} and \eqref{uprange_diatomic}.  An a posteriori check reveals that the ratio in \eqref{stopcrit} does not exceed $0.0383$, verifying our convergence criterion.

\subsection{Expanded study for a single shock.} 
Recall that we are eliminating possible unstable imaginary roots, for purpose of a later homotopy argument.  For this later argument, we require a single, initial, shock wave for which there exist no low-frequency roots with $\Re \lambda\geq 0$.  For both the monatomic and diatomic gas cases, we investigate the case $u_\sp=0.6$, performing the same type of low-frequency analysis as previously, but now taking $\lambda= \sin\theta e^{i\phi}$, adding an additional angular loop $0\leq \phi\leq \pi/2$.  Specifically we chose $\Delta \phi=\pi/50$ and tested $21$ values ranging from $\phi=0$ to $\pi/2$.  As before, we find that all values are nonzero, with ratio \eqref{stopcrit} not exceeding $0.0038$, thus verifying nonexistence of any (not only pure imaginary) unstable low-frequency eigenvalues.  We note that the bound is an order of magnitude smaller than the $\phi=0$ bound performed for all values of $u_\sp$ because the upper bound was very small for $u_\sp=0.6$ relative to the other values tested.

\subsection{Conclusion} There are no unstable imaginary eigenvalues for $|\xi, \lambda|\leq 0.1$, for any shock strength, for either the monatomic or diatomic case.  For the special value $u_\sp=0.6$, there are no unstable eigenvalues with nonnegative real part, for monatomic or diatomic case.


\section{Intermediate-frequency study}
\label{sec:intermediate}

Having treated the high- and low-frequency regimes, $\breve r\geq r_*(\breve \xi)$ and $|\xi,\lambda|\leq 1.0$ in the coordinates of their respective sections, it remains to study the main, intermediate-frequency, regime consisting of their complement.  The first step is to identify a region containing this set in the natural coordinates $\xi$, $\lambda$ appropriate for this regime, and in a form suitable for winding number computations holding $\xi $ fixed and varying $\lambda$, that is, to bound the set $\breve r\leq r_*(\breve \xi)$ by one composed of slices $\xi=\xi_0$, $|\lambda| \leq r$.

\begin{lemma}
\label{nojordan}
The set $\Sigma_1:=\{(\xi, |\lambda|)=(\breve r^{1/2}\breve \xi, \breve r (1-\breve \xi^2)): \;
0\leq \breve r \leq r_*(\breve \xi), \; 0\leq \breve \xi\leq 1 \}$ defined in \eqref{hfbound}\footnote{
Here and below we suppress the $u_\sp$ argument for ease in writing.} is contained in 
$
\Sigma_2:=\cup_{0\leq \breve \xi\leq 1}
\{(\xi, |\lambda|): \; \xi= r^*(\breve \xi)^{1/2} \breve \xi, \, 0\leq |\lambda|\leq r^*(\breve \xi)(1-\breve \xi^2\}.
$
\end{lemma}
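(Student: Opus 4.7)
The plan is to show the containment pointwise: given any $(\xi_0,|\lambda|_0)\in\Sigma_1$, exhibit a value $\breve\xi'\in[0,1]$ whose associated slice in $\Sigma_2$ contains the point. Such a point arises from parameters $\breve r_0\in[0,r_*(\breve\xi_0)]$, $\breve\xi_0\in[0,1]$ via $\xi_0=\breve r_0^{1/2}\breve\xi_0$ and $|\lambda|_0=\breve r_0(1-\breve\xi_0^2)$. The edge cases $\breve\xi_0=0$ (the point lies on the $|\lambda|$-axis and is captured by the $\breve\xi'=0$ slice, since $|\lambda|_0=\breve r_0\le r_*(0)$) and $\breve\xi_0=1$ (so $|\lambda|_0=0$, captured trivially) are immediate, so we may assume $\breve\xi_0\in(0,1)$.

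The central step is an intermediate-value argument applied to $F(\breve\xi):=r^*(\breve\xi)^{1/2}\breve\xi$, the function that records the $\xi$-coordinate of the $\breve\xi$-slice of $\Sigma_2$. Observe that $F(0)=0\le\xi_0$, and, using $\breve r_0\le r^*(\breve\xi_0)$, also $F(\breve\xi_0)=r^*(\breve\xi_0)^{1/2}\breve\xi_0\ge\breve r_0^{1/2}\breve\xi_0=\xi_0$. Assuming $F$ is continuous on $[0,\breve\xi_0]$, IVT produces some $\breve\xi'\in[0,\breve\xi_0]$ with $F(\breve\xi')=\xi_0$, equivalently $r^*(\breve\xi')=(\xi_0/\breve\xi')^2$. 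The $|\lambda|$-bound of the resulting slice is then $r^*(\breve\xi')(1-(\breve\xi')^2)=\xi_0^2\bigl((\breve\xi')^{-2}-1\bigr)$, while $|\lambda|_0=\breve r_0(1-\breve\xi_0^2)=\xi_0^2(\breve\xi_0^{-2}-1)$, so the required inequality $|\lambda|_0\le r^*(\breve\xi')(1-(\breve\xi')^2)$ follows at once from $\breve\xi'\le\breve\xi_0$ and monotonicity of $s\mapsto s^{-2}$ on $(0,1]$.

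The only real obstacle in this plan is to justify continuity of $\breve\xi\mapsto r^*(\breve\xi)$. By Theorem \ref{thm:strong-bounds}, $r^*(\breve\xi)$ is (the square of) the largest real root of the quartic \eqref{eq:quart2}, whose coefficients \eqref{positive-coefficients} depend continuously on $\breve\xi$; away from the discriminant locus, continuous dependence of polynomial roots on coefficients yields continuity of $r^*$, hence of $F$. At any isolated degeneracy one can still bypass IVT by approximating from the left along a continuous branch and invoking the $\breve\xi'$-monotonicity of the $|\lambda|$-bound established above. Since Figure \ref{fig:hfb} exhibits $r^*$ as a smooth function of $\breve\xi$ throughout the computed parameter range, this subtlety is immaterial for the use to which the lemma will be put.
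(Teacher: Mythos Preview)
Your proof is correct and takes a genuinely different route from the paper's. The paper argues topologically: it shows the parametrization $\Psi(\breve\xi,\breve r)=(\breve r^{1/2}\breve\xi,\breve r(1-\breve\xi^2))$ is injective on $S\setminus\{\breve r=0\}$, invokes the Jordan Curve Theorem to identify $\partial\Sigma_1=\Psi(\partial S)$, and then rewrites $\Sigma_2$ as the union of vertical segments dropped from $\partial\Sigma_1$ to the $\xi$-axis, from which $\Sigma_1\subset\Sigma_2$ is declared ``evident.'' You instead work pointwise: given $(\xi_0,|\lambda|_0)$ with parameters $(\breve\xi_0,\breve r_0)$, you use the intermediate value theorem on $F(\breve\xi)=r^*(\breve\xi)^{1/2}\breve\xi$ to locate $\breve\xi'\in[0,\breve\xi_0]$ with the correct $\xi$-coordinate, then exploit the monotonicity of $s\mapsto s^{-2}-1$ to verify the $|\lambda|$-bound. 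Your argument is more elementary (no Jordan Curve Theorem) and makes the mechanism of the containment explicit, whereas the paper's approach gives a cleaner global geometric picture. Both proofs require continuity of $r^*(\breve\xi)$; you address this explicitly, while the paper uses it implicitly (to make $\Psi(\partial S)$ a closed curve). One minor remark: your ``captured trivially'' for $\breve\xi_0=1$ still needs a slice with the correct $\xi$-value, but that is just the IVT step with $\breve\xi_0=1$, so no separate argument is needed.
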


\begin{proof}
	Define $S:=\{\breve \xi, \breve r: 0\leq \breve r\leq r^*(\breve \xi)\}$ and 
	$
	\Psi:(\breve \xi, \breve r) \to (\xi, |\lambda|)=(\breve r^{1/2}\breve \xi, \breve r(1-\breve \xi^2)),
	$
	so that $\Sigma_1=\Psi(S)$.  
	It is readily verified that $\Psi$ is one-to-one on $S$, except along the boundary $\breve r=0$, which maps to
	the point $(\xi,|\lambda|)=(0,0)$.
	For, equating 
	$$
	(\breve r_1^{1/2}\breve \xi_1, \breve r_1(1-\breve \xi_1^2)) =
	(\breve r_2^{1/2}\breve \xi_2, \breve r_2(1-\breve \xi_2^2)) ,
	$$
	we find, squaring the first coordinates and adding to the second, that $\breve r_1=\breve r_2$, whence, comparing first coordinates, $\breve \xi_1=\breve \xi_2$ unless $\breve r_j= 0$.
	Thus, $\Psi(\partial S)$ defines a simple closed curve in the $\xi$-$\lambda$ plane, and 
	(by continuity of $\Psi$ together with the Jordan Curve Theorem) therefore
	$\Psi(S)^{int}=\Psi(\partial S^{int})$ and
	$\partial \Sigma_1=\partial \Psi(S)=\Psi(\partial S)$.
	Rewriting 
	$$
	\Sigma_2=\cup_{(\xi_0,|\lambda_0|)\in \Psi(\partial S)} \{\xi_0 \} \times [0, |\lambda_0|]
=\cup_{(\xi_0,|\lambda_0|)\in \partial \Sigma_1} \{\xi_0 \}\times [0, |\lambda_0|], 
$$
we thus have evidently $\Sigma_1\subset \Sigma_2$.
\end{proof}

Our protocol, based on Lemma \ref{nojordan}, is to step through $\breve \xi\in [0,1]$, 
testing for roots of the Evans function on 
the fixed-$\xi$ slices $ \xi= r^*(\breve \xi)^{1/2}\breve \xi$, $ 0\leq |\lambda|\leq r^*(\breve \xi)\}$
using a winding number computation in $\lambda$ around the semicircle $\Gamma:=\partial \{\lambda: \; \Re \lambda \geq 0, \, |\lambda|\leq r^*(\breve \xi)\}$.
For this analysis, it is necessary to use a version of the Evans function (i) is analytic in $\lambda$, and (ii) has no roots 
on, or overly near, the curve $\Gamma$, in particular at $\lambda=0$.
The standard Evans function $D(\xi,\lambda)$ defined in \eqref{evanseq} has these properties for $\xi$ bounded away from $0$,
but for $\xi=0$ has a root at $\lambda=0$, hence gives numerical difficulty for $\xi$ too near $0$.
We find it useful, therefore, to supplement this standard Evans function in our investigations
with a modification removing the zero at $(\xi,\lambda)=(0,0)$, particularly near small values of $\xi, \lambda$.

\subsection{Modified balanced flux coordinates}
Specifically, we construct For intermediate frequencies a modified version of the balanced flux Evans function of Section \ref{balanced} possessing the property of analyticity in $\lambda$.  The key idea is to note that the Evans system \eqref{bevans_ode} is independent of the definition of $r(\xi,\lambda)$.  Meanwhile, consistent splitting is inherited in $r,\xi^\sharp, \lambda^\sharp$ coordinates wherever it holds in the original $\xi$, $\lambda$ coordinates, where $\xi^\sharp:=\xi/r, \lambda^\sharp:= \lambda/r$.  Thus, defining 
\beq
\label{Dsharp}
D_\sharp(r, \xi^\sharp, \lambda^\sharp)
\eeq
to be the Evans function associated with \eqref{bevans_ode}, we obtain an arbitrary number of different 
versions of the Evans function, with different properties, through the prescription
\beq
\label{Drecovery}
D_{r(\cdot)}(\xi,\lambda) := D_{\sharp}(r(\xi, \lambda), \xi/r(\xi,\lambda), \lambda/r(\xi,\lambda))
\eeq
generalizing \eqref{evanseq}, one for each choice of $r(\cdot)$.

The choice $r(\xi, \lambda)=|\xi,\lambda|$ recovers the balanced flux version described in Section \ref{balanced}.  As described further in \cite{BHLyZ2}, this has the advantage of removing roots at the origin, similarly as for integrated coordinates in 1D.  However, it has the disadvantage that the resulting Evans function $D_{r(\cdot)}$ is not analytic in either of the variables $\xi$ or $\lambda$, hence we are not able to carry out winding number computations, thus losing a key dimensional advantage in carrying out large-scale Evans computations in the intermediate frequency regime.

For intermediate frequencies, we therefore introduce the modified radial function
\[
r=r_2(\xi,\lambda):=|\xi|+\lambda
\]
with the twin properties that $(i)$ $r_2\sim r$ for low frequencies, so that this definition likewise removes vanishing at the origin, and $(ii)$ $r_2$ unlike $r$ is analytic in $\lambda$ for each fixed $\xi$.  The associated Evans function $D_{r_2}$ inherits analyticity in $\lambda$ away from $(\xi,\lambda)=(0,0)$, and is continuous (indeed, analytic) along rays through the origin.

This allows us, using the Evans function $D_{r_2}$, to step through various values of $\xi$ and check for each value by a winding number computation, whether there are any zeros of the Evans function within the bounded computational domain given by the complement of \eqref{hfbound}.  See \cite{BHLyZ2} for further discussion.

\subsection{Advantages and disadvantages}

In some sense the modified balanced flux version of the Evans function is the multidimensional generalization of the integrated Evans function of one-dimensional theory, removing undesirable roots at the origin while preserving analyticity in $\lambda$ if not $\xi$.  However, whereas the integrated Evans function has similar qualitative behavior to the standard (unintegrated) one, the modified balanced flux version is not quite so well-behaved.  A delicate point for this version  is behavior for $|\xi|\ll 1$ but not zero, since there is a rapid transition at $|\lambda|=|\xi|$ from angle $(\check \xi, \check \lambda)=(0,1)$ to $(\check \xi, \check \lambda)=(i,0)$, with associated rapid variation in $D_{r_2}$ corresponding to the limiting values along radii of different angles through the origin.  This leads to substantial winding of the image contour for values of $|\lambda|$ on the order of $|\xi|$, requiring additional refinement in case of small $\xi$ in order to obtain a smooth-looking output.   Even for intermediate values of $\xi$, the image contours for the modified balanced flux Evans function $D_{r_2}$ exhibit substantial additional winding near $\lambda=0$ as compared to image contours of the standard Evans function $D$ and degrading the attractiveness (and readability) of figures.  See Figures \ref{fig:large-small-shock-xi}--\ref{fig:xi-zero-modified}.

However, the property that $D_{r_2}$ does not vanish at the origin ensures that winding number computations remain well-conditioned even for small $\xi$.  Indeed, it should be noted that, for both standard and modified balanced flux Evans functions, it is not necessary to compute winding numbers for contours passing arbitrarily close to $(\xi,\lambda)=(0,0)$, since we have already treated this region by a separate low-frequency analysis.  In the end, the results from both (standard, and modified balanced flux) types of winding number computation are satisfactory,  giving a useful double check on stability conclusions.


\begin{figure}[t]
\begin{center}$
\begin{array}{cc}
\includegraphics[width=8.25cm]{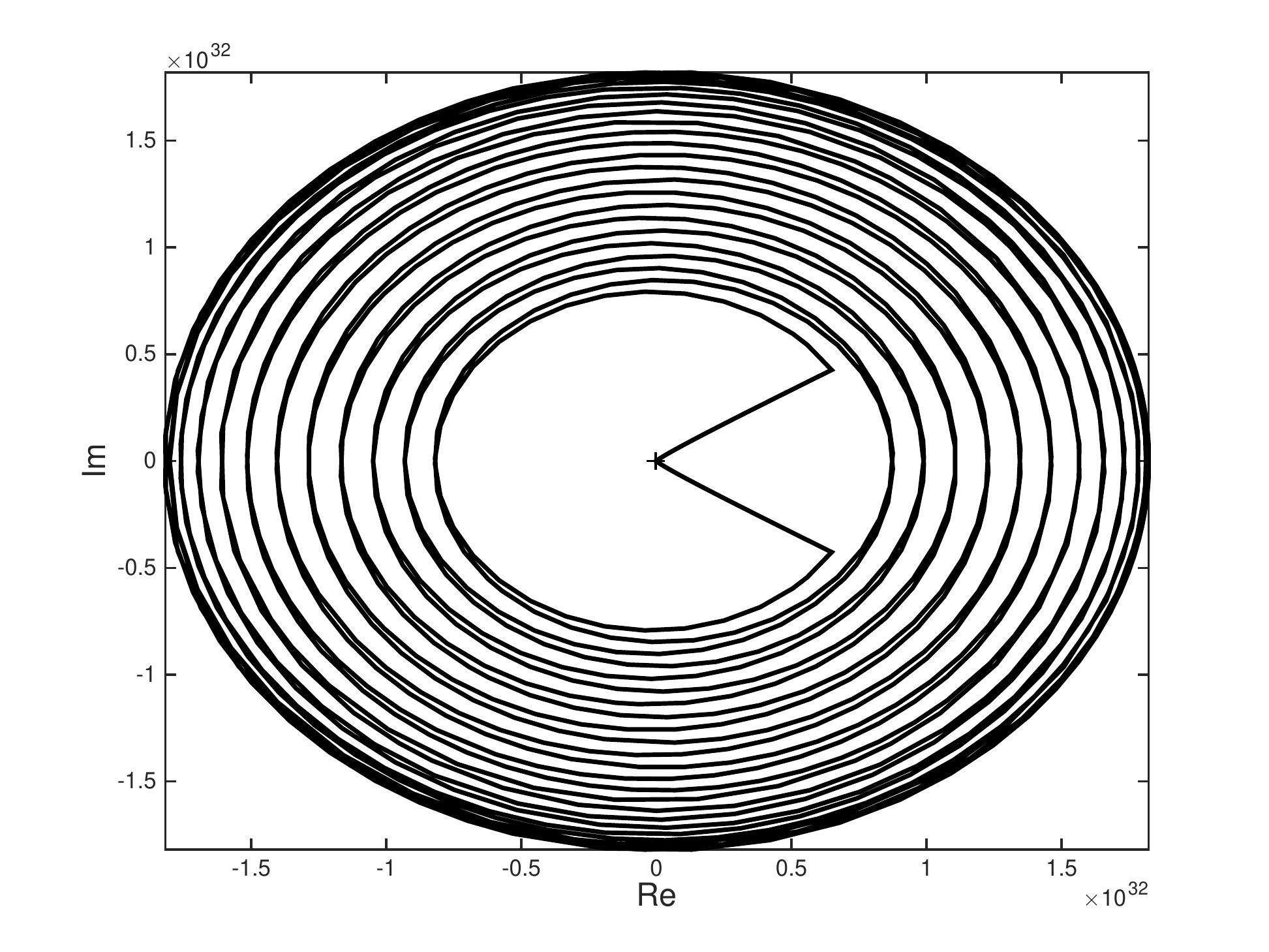} & \includegraphics[width=8.25cm]{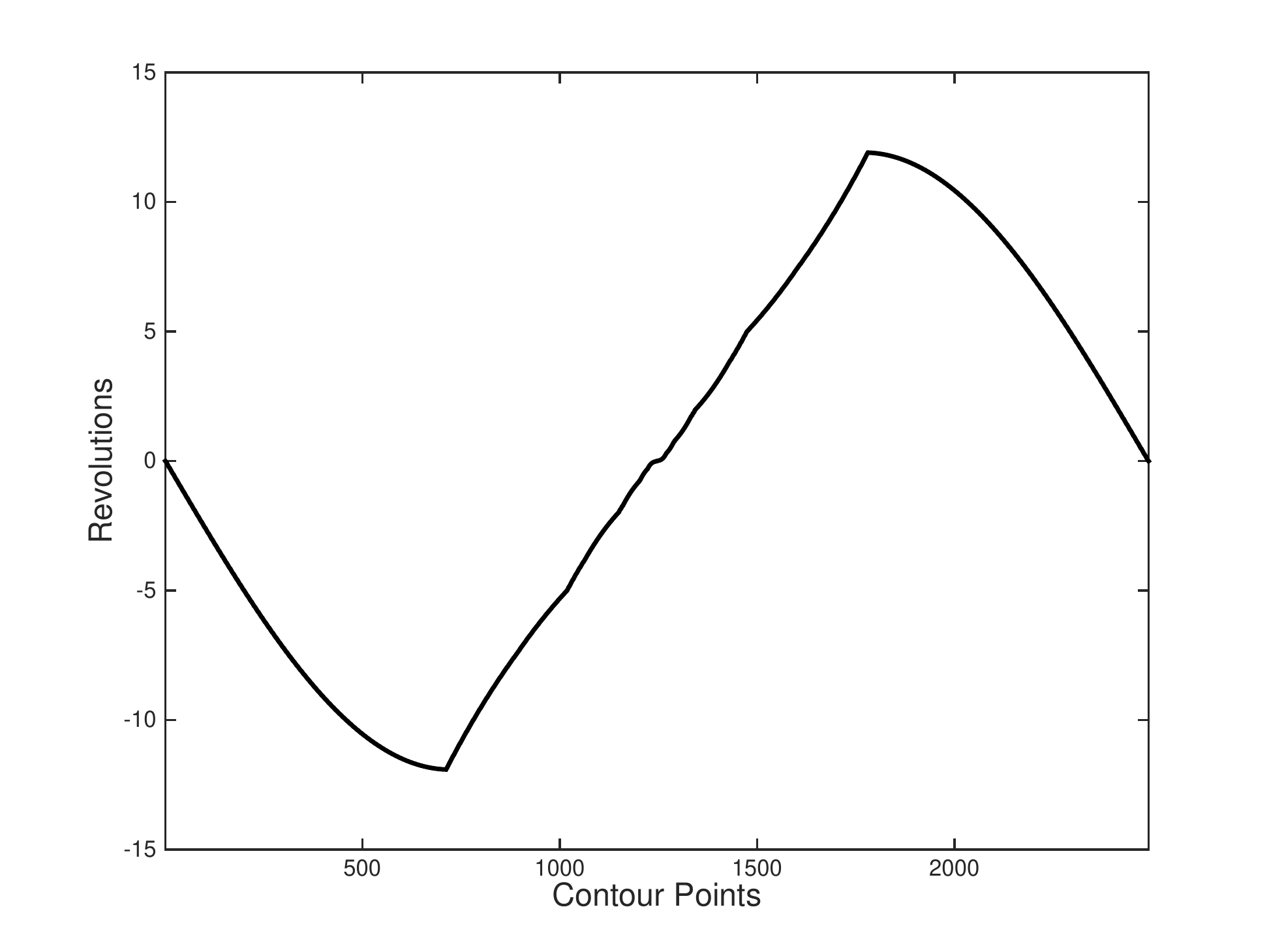} \\ 
(a) & (b)
\end{array}$
\end{center}
\caption{Evans function output along the semi-circular contour in Eulerian coordinates.  In $(a)$ we see the output of the weak-shock high-$\breve\xi$ case with $u_\sp=0.27$ and $\breve\xi=0.95$.  This corresponds to $r^*=16.3$ (for the high frequency bounds in $|\lambda|$) and $\xi=12.2778$. In $(b)$, we compute the angle of the winding.  Notice that the outer circles wrap 24 times around the origin.  By zooming into the origin one sees an unwinding of the contour the other way, so that the net winding is zero.  In other words, this contour has winding number zero and therefore demonstrates that shocks in this parameter regime are spectrally stable, yet it is not obvious via inspection that this is the case.  We remark that over 2492 contour points were needed to produce this graph.}
\label{fig:winding}
\end{figure}

\subsection{Pseudo-Lagrangian coordinates}
\label{ssec:pseudolag}
Besides the change in dependent variables in the Evans system just discussed, we find it necessary also to make a ``pseudo-Lagrangian'' change of coordinates $x_1\to y_1$ in the independent variable $x_1$, determined by
\beq
\label{pl}
dy_1/dx_1=\hat \rho(x_1).
\eeq
This has the effect of converting the upper lefthand diagonal element $-r\check\lambda\hat \rho $ in the coefficient matrix given in \eqref{bevans_ode} to a constant:  $(dx_1/dy_1)(-r\check\lambda\hat \rho )= -r\check\lambda$.  As this entry, similarly as in the high-frequency analysis of Section \ref{sec:hfb}, dominates order $r$ high-frequency behavior, and as the Evans function by its construction factors out exponential growth predicted by the endstates of the coefficient matrices at $x_1=\pm \infty$, the result is to eliminate factors $\sim e^{C\lambda}$ that would otherwise occur, leaving more manageable $e^{D\sqrt{\lambda}}$ asymptotics for large $\lambda$.  This has the effect of greatly reducing winding about the origin in our winding number computations, thus reducing the number of mesh points needed and thereby computation time; indeed, in the original coordinates, we were not able to compute past $|\lambda|$ values of order more than one, whereas in pseudo-Lagrangian coordinates we are able to compute out the two orders of magnitude farther that we need to complete the analysis.  See \cite{BHLyZ1} for further discussion of this important point.

To illustrate the amount of winding in basic coordinates, we consider the shock $u_\sp=0.75$ with $\breve\xi=0.95$.  This corresponds to $r^*=10.4$ (for the high frequency bounds in $|\lambda|$) and $\xi=6.2770$.  In Figure \ref{fig:winding}, we see that the Evans function winds around nearly $5$ times before unwinding for a net winding number of zero.  Note however that a radius of $r^*=10.4$ is relatively small for the high-frequency bounds as given in Figure \ref{fig:hfb}.  Indeed when the shock is strong (close to $u_\sp=0.25$) and the value of $\breve\xi$ is small (close to $\breve\xi=0$), we see that the radius $r^*$ is around $280$.  This results in so much winding that it is prohibitive to resolve the winding number using the basic form of the Evans function given in \eqref{firstorder}--\eqref{evans_ode}.  Thus, the pseudo-Lagrangian coordinates are essential to this study.


\subsection{Numerical implementation}

Our approach to Evans function computation is on its way to becoming a mature technology.  All computations in this study were performed using {\sc STABLAB}, which is a {\sc MATLAB}-based toolbox developed by Barker, Humpherys, Lytle, and Zumbrun \cite{STABLAB}.

For our intermediate-frequency study, we begin by computing the two-variable traveling wave profile, which is formulated as a three-point boundary-value problem on the infinite domain $\mathbb{R}$, that is, in addition to conditions at the end states, a third condition.  To simplify this, we halve the domain and double the size of the system, thus expressing the system as a two-point boundary-value problem in four variables over a half-line domain $[0,\infty)$.  We use a sixth-order solver {\tt bvp6c} \cite{bvp6c}, which is a generalization of the built-in lower-order solvers {\tt bvp4c} and {\tt bvp5c} included in {\sc MATLAB}.  We set the absolute and relative tolerances to be $10^{-8}$ and $10^{-6}$, respectively.

Using two matching conditions, the centering condition, and a projective boundary condition as $x_1\to\infty$, we are able to solve the profiles over the entire parameter regime $u_\sp\in [u_*,1]$ with relative ease.  We follow the process described in \cite{HLyZ} to make sure the computational domains $[-L,L]$ are sufficiently large to get good approximations of the Evans function.  For a monatomic gas, the domain varied from $L=40$ in the strong shock case to $L=300$ in the weak shock case and for the diatomic case, the domain varied from $L=40$ in the strong shock case to $L=400$ for the weak shock case.  Note that the weak shock case has slower exponential decay to the end states and therefore requires a larger domain.

Following the computation of the profile, we are able to set up the batch jobs for Evans function computation.  The {\sc STABLAB} package uses the continuous orthogonalization approach to Evans function computation as described in \cite{HuZ2}.  Specifically, we compute the Evans function by solving the system
\begin{subequations}
\label{abel-polar}
\begin{align}
\Omega' &= (I -  \Omega \Omega^* ) A \Omega,  \label{abel-polar:1}\\
\gamma' &= \tr(\Omega^* A \Omega)\gamma.  \label{abel-polar:2}
\end{align}
\end{subequations}
The Evans function then takes the form
\begin{equation}
\label{evans-final}
D(\lambda) = \gamma_-(0)\,\, \gamma_+(0) \,\, \det\left(\begin{bmatrix} \Omega_- &  \Omega_+\end{bmatrix}\right) \Bigr|_{x_1=0}.
\end{equation}
We remark that there are important pre-processing steps built into \cite{STABLAB}, such as making an analytic choice of initial conditions via Kato's method \cite{Kato}; see \cite{BHRZ, HLZ, HLyZ, BHZ1} for discussions on these details.

Because of the excessive winding in the case where the shock is strong ($u_\sp$ near $1/4$ for monatomic and $u_\sp$ near $1/6$ for diatomic) and $\breve\xi$ is small, we also compute the Evans function using the no-radial option where we throw out the contribution from the scalar ODE \eqref{abel-polar:2} that us used to restore analyticity.  Thus we simply compute the quantity
\begin{equation}
\label{evans-no-radial}
D_{nr}(\lambda) = \det\left(\begin{bmatrix} \Omega_- &  \Omega_+\end{bmatrix}\right) \Bigr|_{x_1=0},
\end{equation}
This no-radial formulation leaves us with a topological index that still contains the same winding number properties (a Poincar\'e index) needed to determine the number of eigenvalues inside the contour; for more details see \cite{AB}.  All of these are options built into {\sc STABLAB}.


\begin{figure}[t]
\begin{center}$
\begin{array}{cc}
\includegraphics[width=8.25cm]{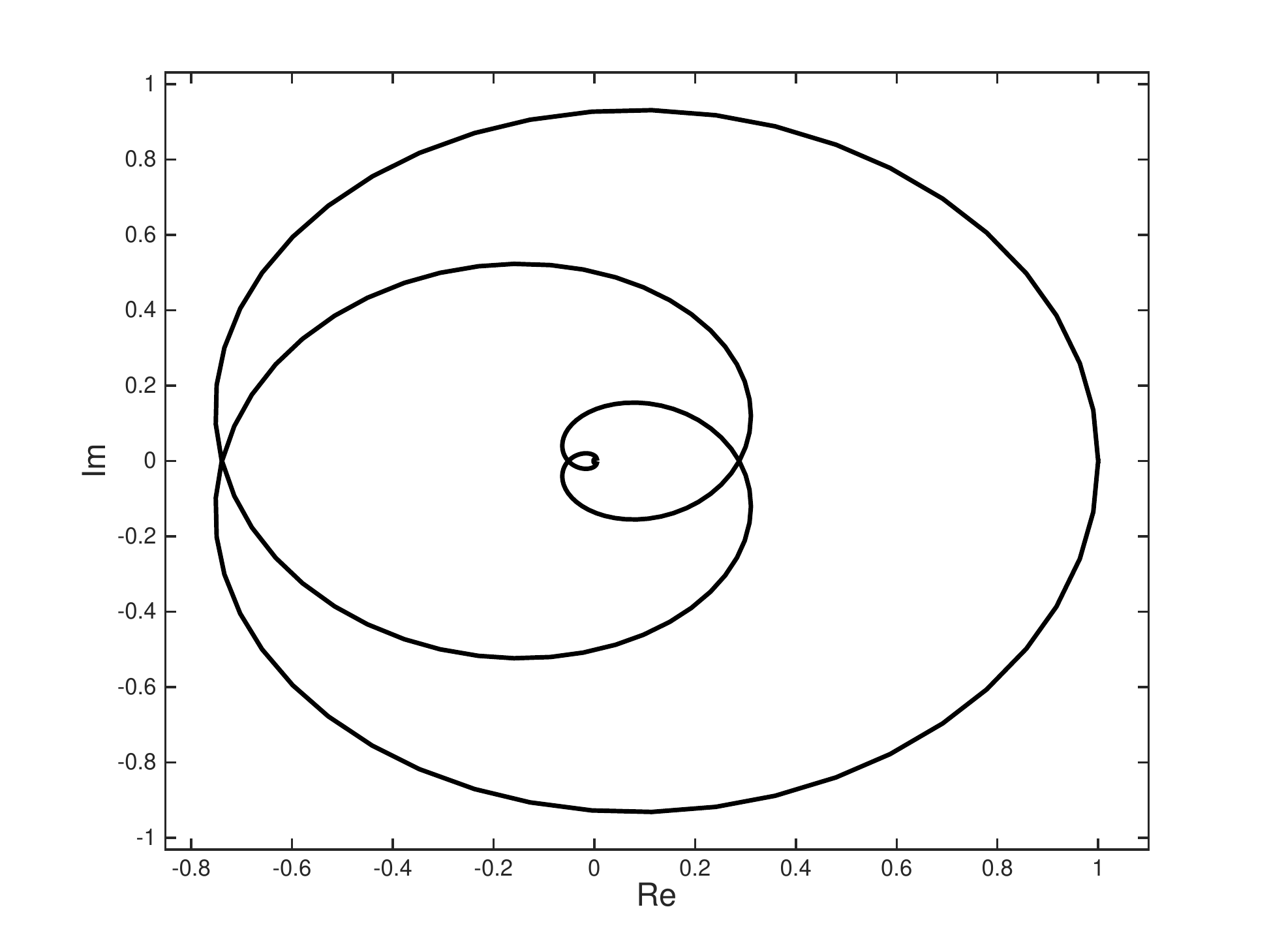} & \includegraphics[width=8.25cm]{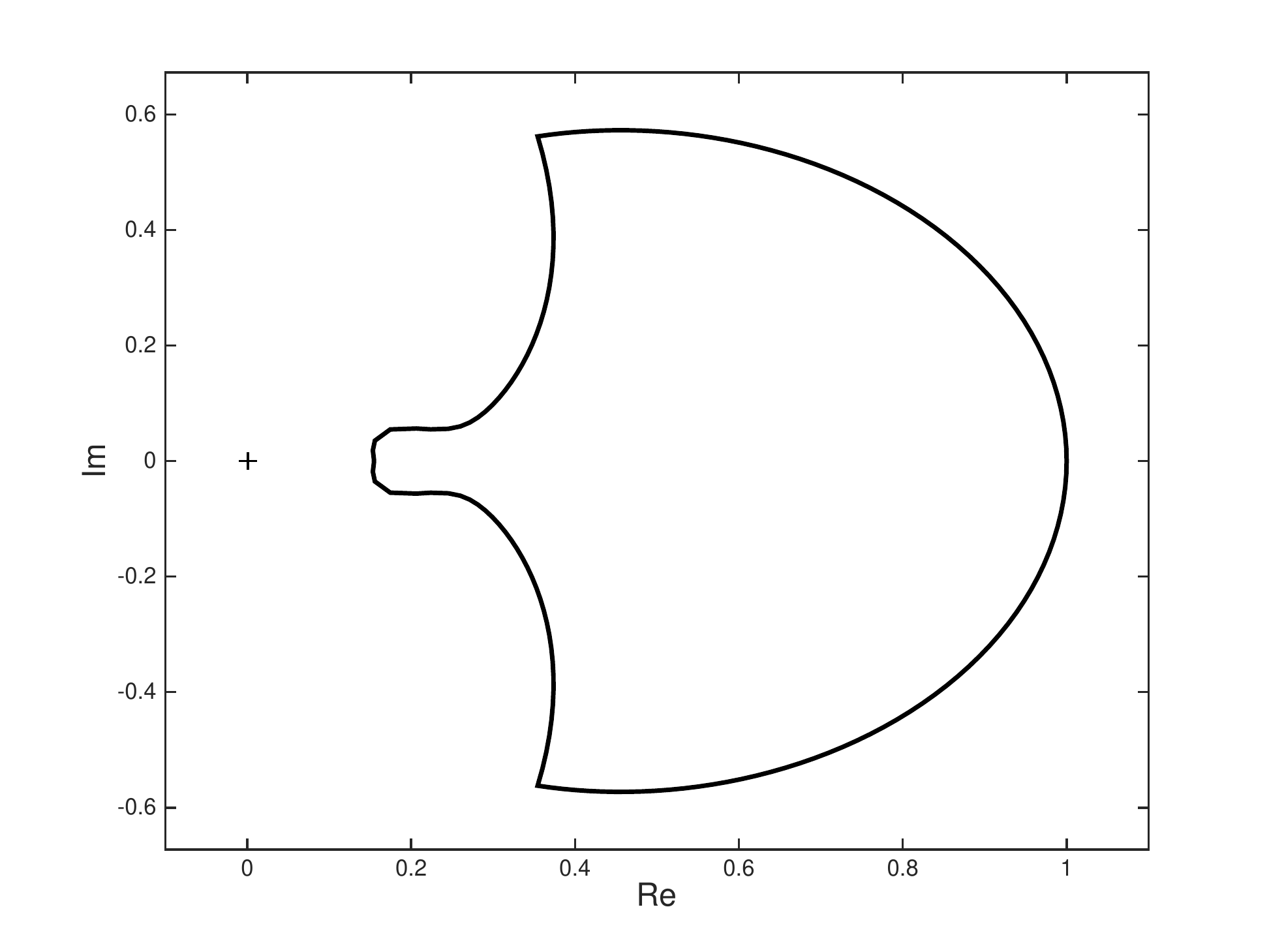} \\ 
(a) & (b)\\
\includegraphics[width=8.25cm]{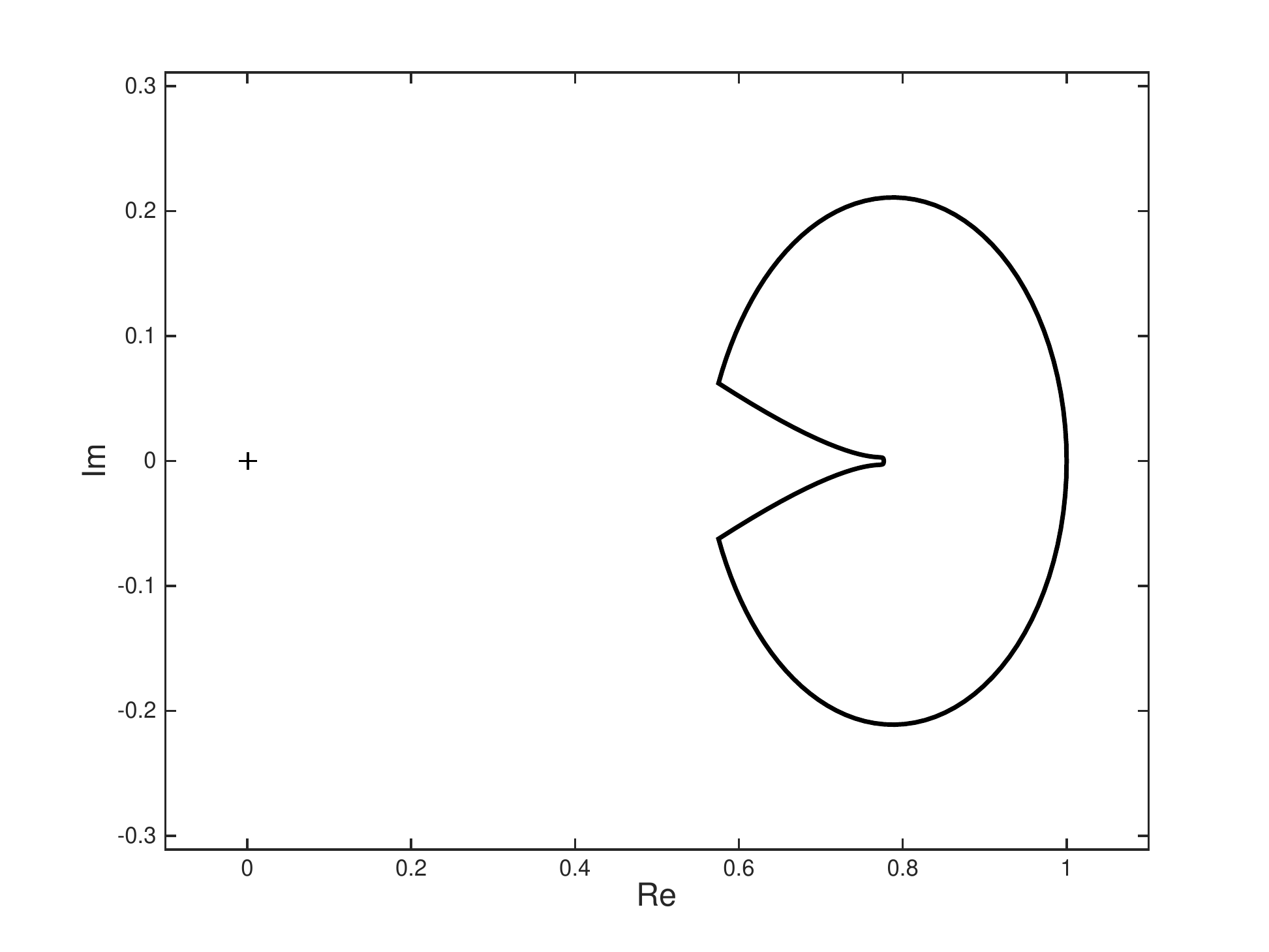} & \includegraphics[width=8.25cm]{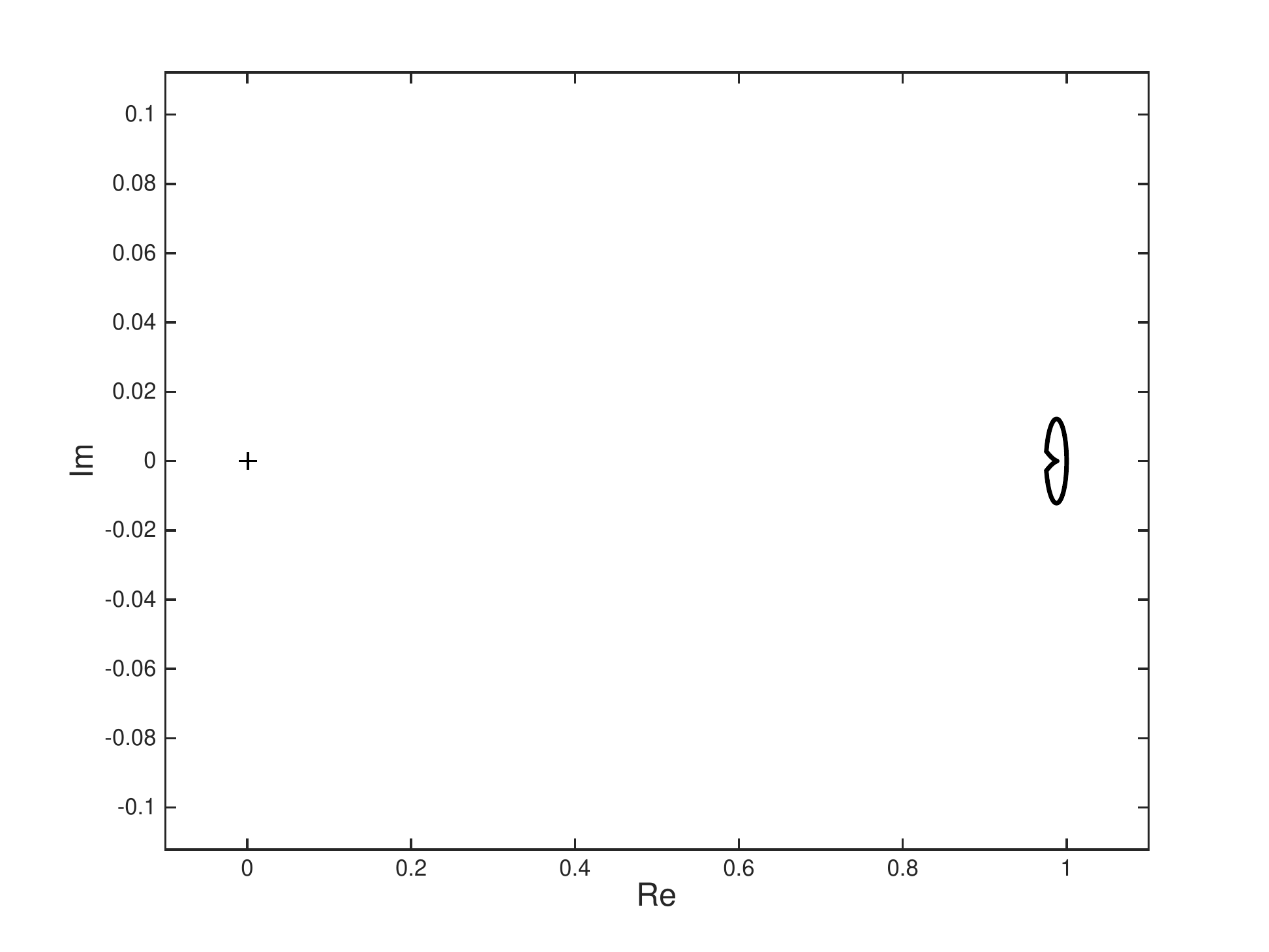} \\ 
(c) & (d)
\end{array}$
\end{center}
\caption{Evans function output for a monatomic gas along the semi-circular contour.  Toggling high and low values of $\breve\xi$ together with strong and weak shocks strengths.  In $(a)$, we have the strong-shock low-$\breve\xi$ case with $u_\sp=0.27$ and $\breve\xi= 0.025$.  This corresponds to a radius of $r^*=250.8$ (for the high frequency bounds in $|\lambda|$) and a $\xi=0.3960$.  In $(b)$, we have the weak-shock high-$\breve\xi$ case with $u_\sp=0.75$ and $\breve\xi= 0.025$.  This corresponds to $r^*=53.6$ and a $\xi=0.1830$.   In $(c)$, we have the strong-shock high-$\breve\xi$ case with $u_\sp=0.27$ and $\breve\xi= 0.95$.  This corresponds to $r^*=16.3$ and a $\xi=12.2778$.  Finally in $(d)$, we have the weak-shock high-$\breve\xi$ case with $u_\sp=0.75$ and $\breve\xi= 0.95$.  This corresponds to $r^*=4.3$ and $\xi=6.2770$.  The winding numbers in $(b)$--$(d)$ are zero by inspection.  The case $(a)$ also has winding number zero, but this is not obvious given the amount of winding and unwinding around the origin.}
\label{fig:large-small-shock-xi}
\end{figure}

\subsection{Numerical results}

We performed three Evans function batch jobs for the monatomic gas case, and then repeated our efforts for the diatomic case (six batch jobs in total).  Each job was with pseudo-Lagrange coordinates but with different Evans function formulations, namely standard balanced flux formulation, the no-radial balanced flux formulation, and the modified balanced flux formulation.  Each job was carried out via the continuous orthogonalization method described above for parameter pairs $(u_\sp,\breve\xi)$, where

\begin{equation}
\label{uprange_monatomic}
\begin{aligned}
u_\sp &\in \{0.25, 0.26, 0.27, 0.28, 0.29, 0.30, 0.35, 0.40, 0.45, 0.50, \\
 & \qquad 0.55, 0.60, 0.65, 0.70, 0.75, 0.80, 0.85, 0.90, 0.95\}
\end{aligned}
\end{equation}
for the monatomic case,
\begin{equation}
\label{uprange_diatomic}
\begin{aligned}
u_\sp &\in \{0.167, 0.17, 0.18, 0.19, 0.20, 0.25, 0.30, 0.35, 0.40, 0.45, \\
 & \qquad 0.50, 0.55, 0.60, 0.65, 0.70, 0.75, 0.80, 0.85, 0.90, 0.95\}
\end{aligned}
\end{equation}
for the diatomic case, and for both cases, we used
\[
\breve\xi \in \{0.005,0.01,0.15,0.2\} \cup \{0.025,0.05,\ldots,0.95,0.975,1.0\}.
\]
This corresponds to 40 equally spaced values in $\breve\xi$ on the domain $[0.025,1]$, together with an additional 4 equally spaced values for a refined grid on $[0.005,0.025]$ to better study the behavior for small values of $\breve\xi$ closer to zero.  For each of the 44 values of $\breve\xi$ and $19-20$ values of $v_+$ (or $45\times 19 = 836$ contours for each of the  three monatomic batch jobs and $45\times 20=900$ contours for each of the three diatomic batch jobs), the contour radius was set taken to be $1.1$ times the high-frequency bound $r^* := r^*(u_\sp, \xi)$ as defined in \eqref{hfbound}.  This extra $10\%$ was taken for good measure.  For a physical measure of the associated shock strengths, we match in Tables \ref{tbl:mach-numbers}--\ref{tbl:mach-numbers2} of Appendix \ref{mach} these values of $u_\sp$ with the Mach numbers of the corresponding shocks.  Note that $u_\sp=1/4$ corresponds to an infinite Mach number for the monatomic case and $u_\sp=1/6$ for the diatomic case.

For each of the six batch jobs, the contours consisted of at least $50$ radial points in the first quadrant and $50$ points along the positive imaginary axis.  Exploiting conjugate symmetry, we reflected the output to capture the half of the contour in the fourth quadrant and to produce the figures and winding number calculations.  We note that for the first and third batch jobs (for both monatomic and diatomic cases), we used a contour-refinement feature in {\sc STABLAB} that adds contour points as needed to achieve a pre-determined tolerance of
\[
\dfrac{\Delta D(\lambda)}{D(\lambda)} \leq \dfrac{1}{5} = 0.20.
\]
This guarantees, by Rouch\'e's theorem that our winding number calculations are accurate.  We note that in the monatomic case, that there were contours with as many as 230 points required to achieve the desired tolerance.  In the diatomic case, there were contours with as many as 474 points.

\begin{figure}[t]
\begin{center}$
\begin{array}{cc}
\includegraphics[width=8.25cm]{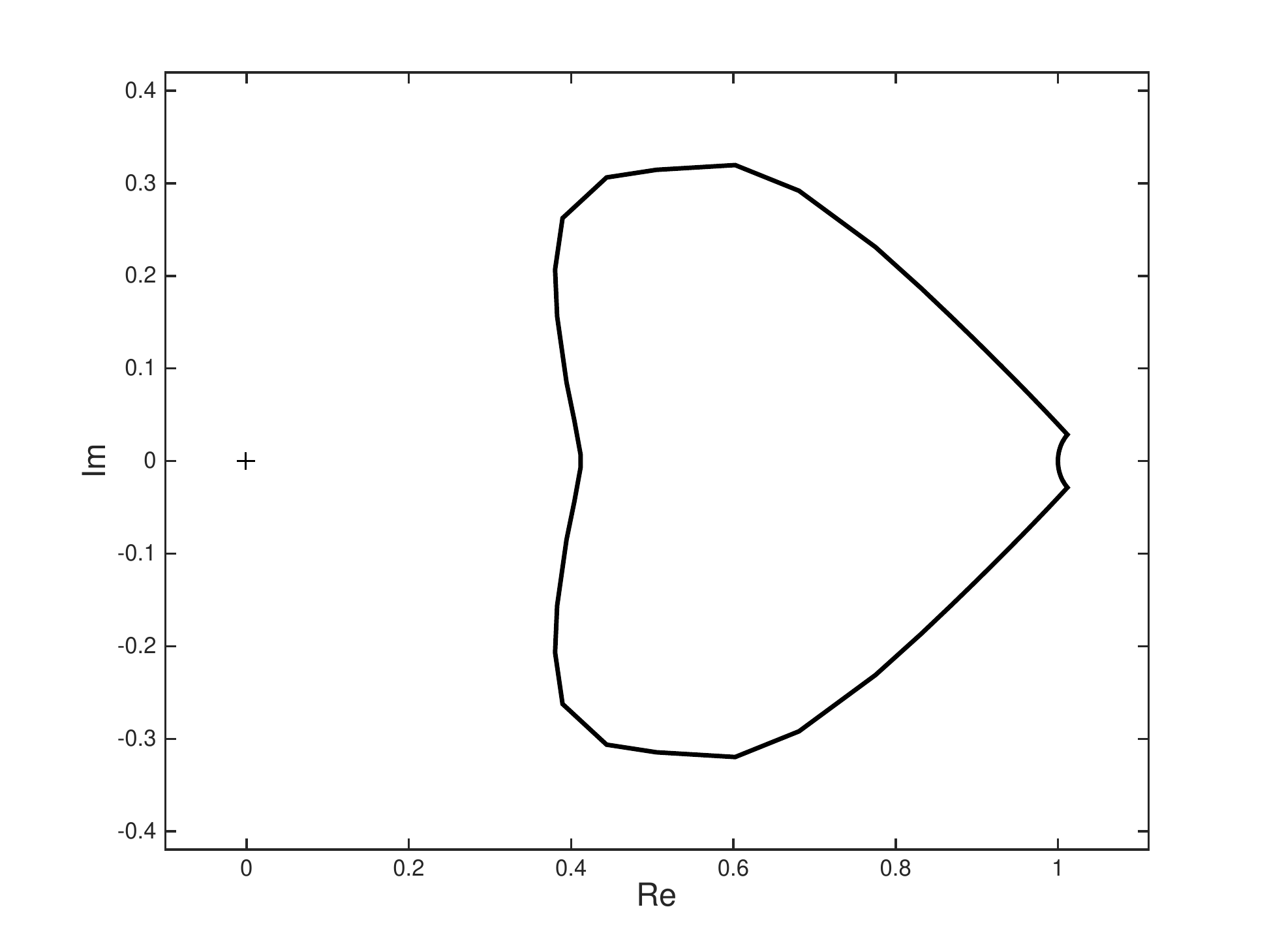} & \includegraphics[width=8.25cm]{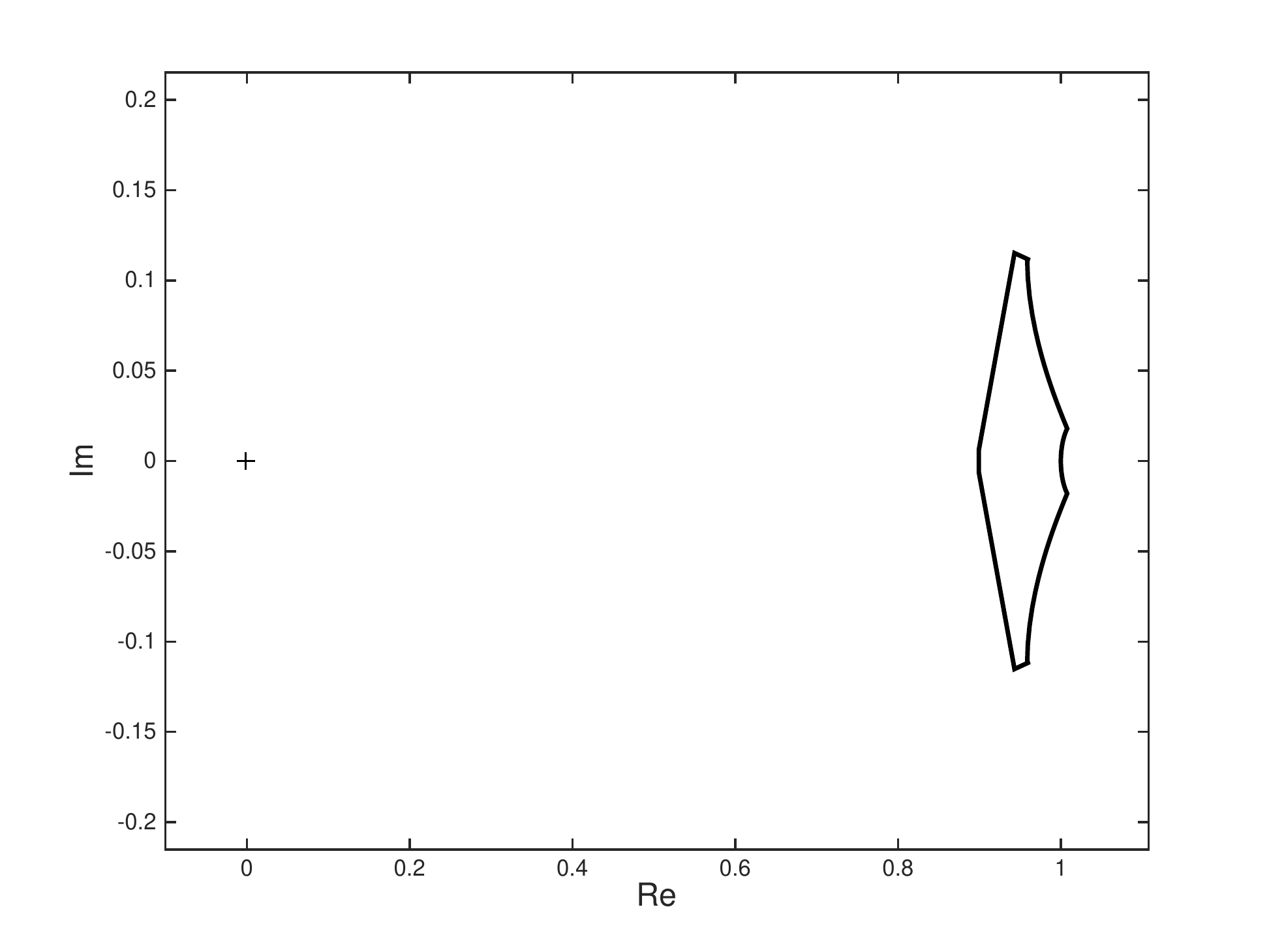} \\ 
(a) & (b)\\
\includegraphics[width=8.25cm]{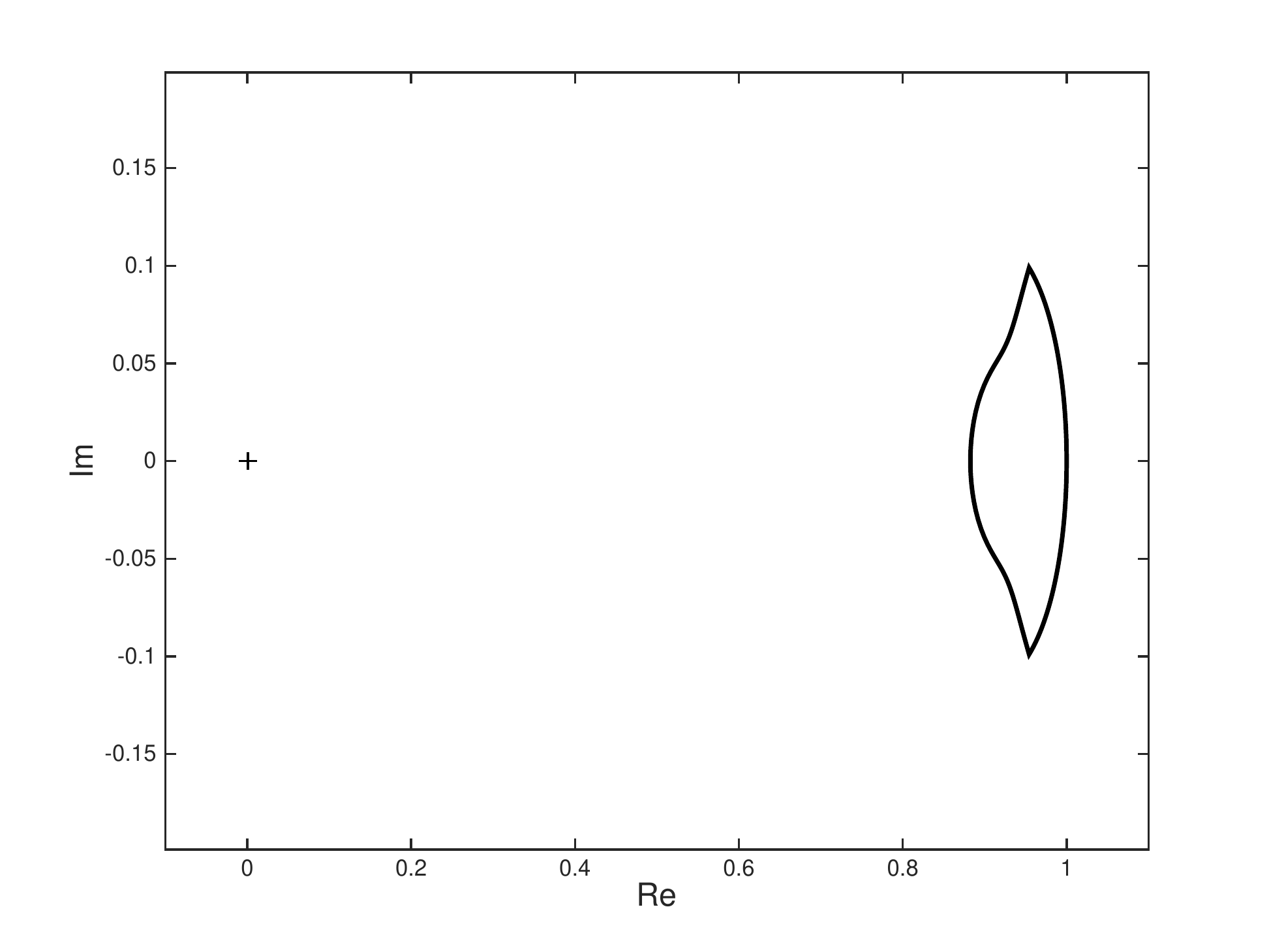} & \includegraphics[width=8.25cm]{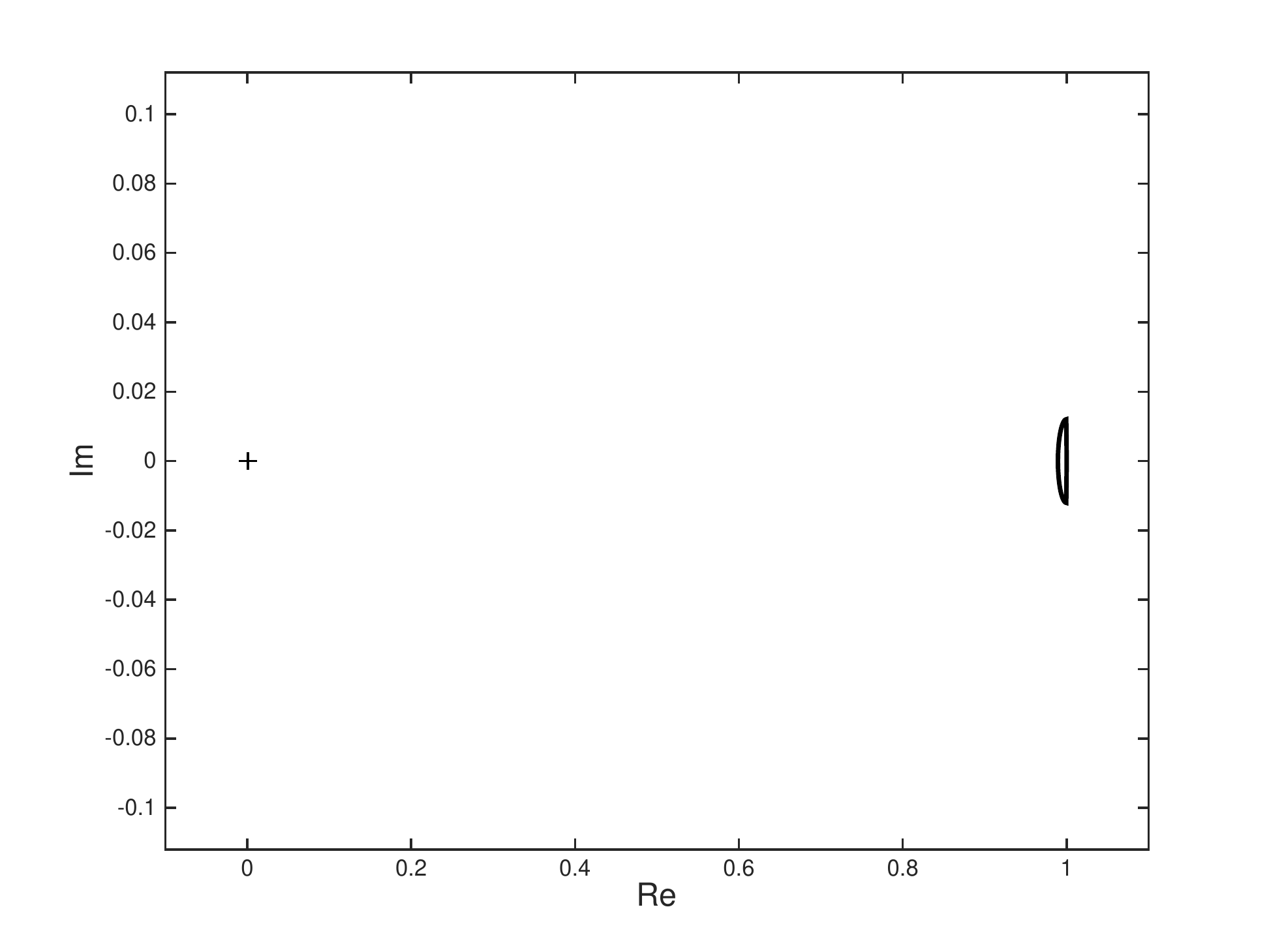} \\ 
(c) & (d)
\end{array}$
\end{center}
\caption{The no-radial Evans function output for the same four contours and parameters as Figure \ref{fig:large-small-shock-xi}.  We see visually that all of these have winding number zero.}
\label{fig:no-radial-winding}
\end{figure}

One of the overarching themes observed in the results of our first batch job(s) (for each of the monatomic and diatomic cases) is that the larger the radius of the semi-circular contour, the more the Evans function winds and unwinds around the origin. (Note that the parameter values for which there is a large amount of winding are precisely those for which mesh refinement was used.)  Therefore, in the strong-shock regime(s), when $\breve\xi$ is close to zero, the high-frequency bounds are also large and thus do a lot of winding; see Figure  \ref{fig:hfb}.  If the shock is small or $\breve\xi$ is larger, then the radius is smaller and the winding is minimal.  As an example, we examine the output for four test cases in Figure \ref{fig:large-small-shock-xi}.   Specifically we compute the Evans function in standard pseudo-Lagrangian balanced flux coordinates toggling between strong and weak shocks as well as near-zero and near-unity values of $\breve\xi$.  Note that with the exception of the strong shock case, where $\breve\xi$ is close to zero, that the winding numbers are zero by inspection.  In all cases, but especially when shock is strong and $\breve\xi$ is close to zero, we computed the winding numbers numerically to confirm that they are zero as expected.

\begin{figure}[t]
\begin{center}$
\begin{array}{cc}
\includegraphics[width=8.25cm]{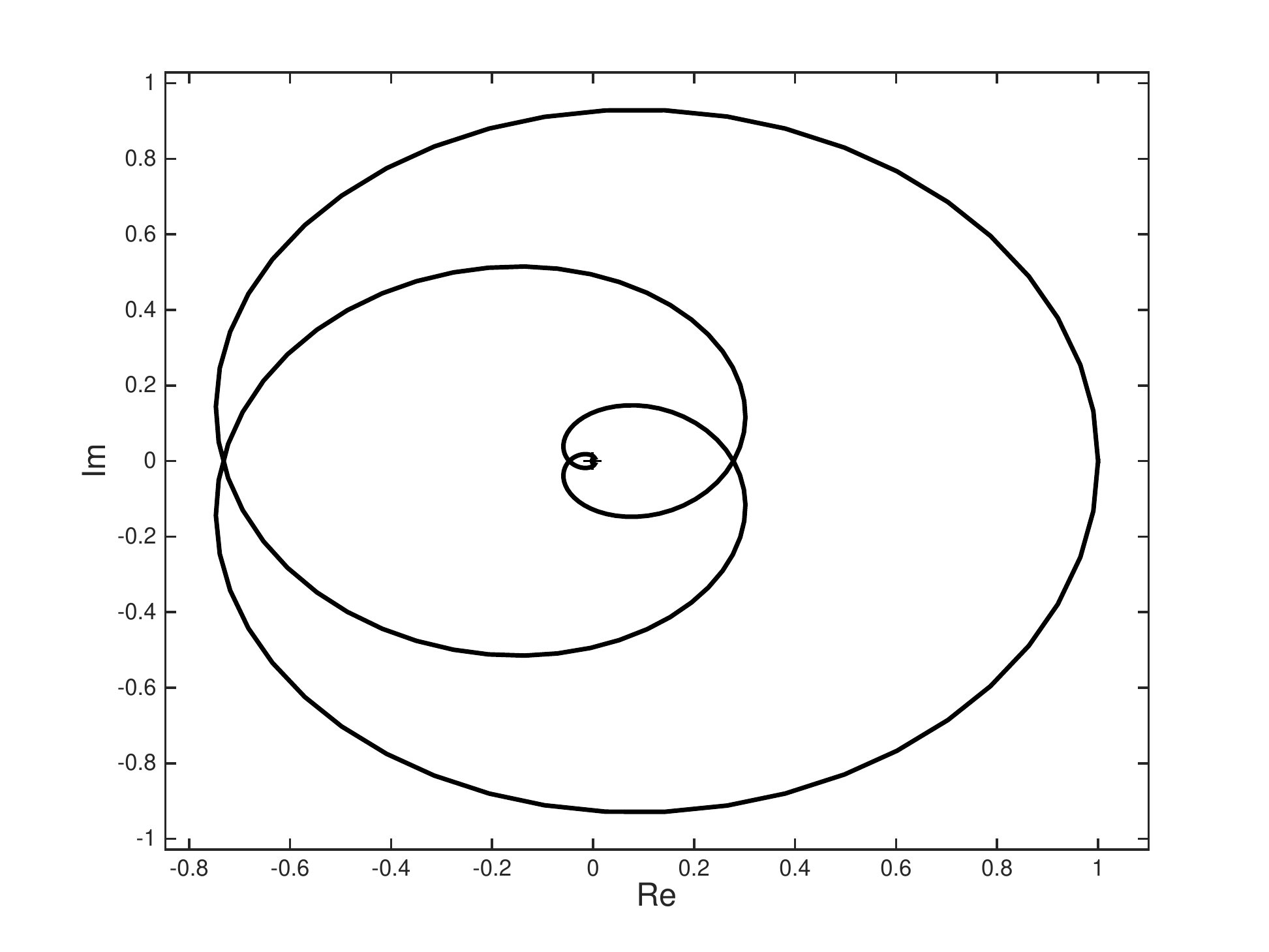} & \includegraphics[width=8.25cm]{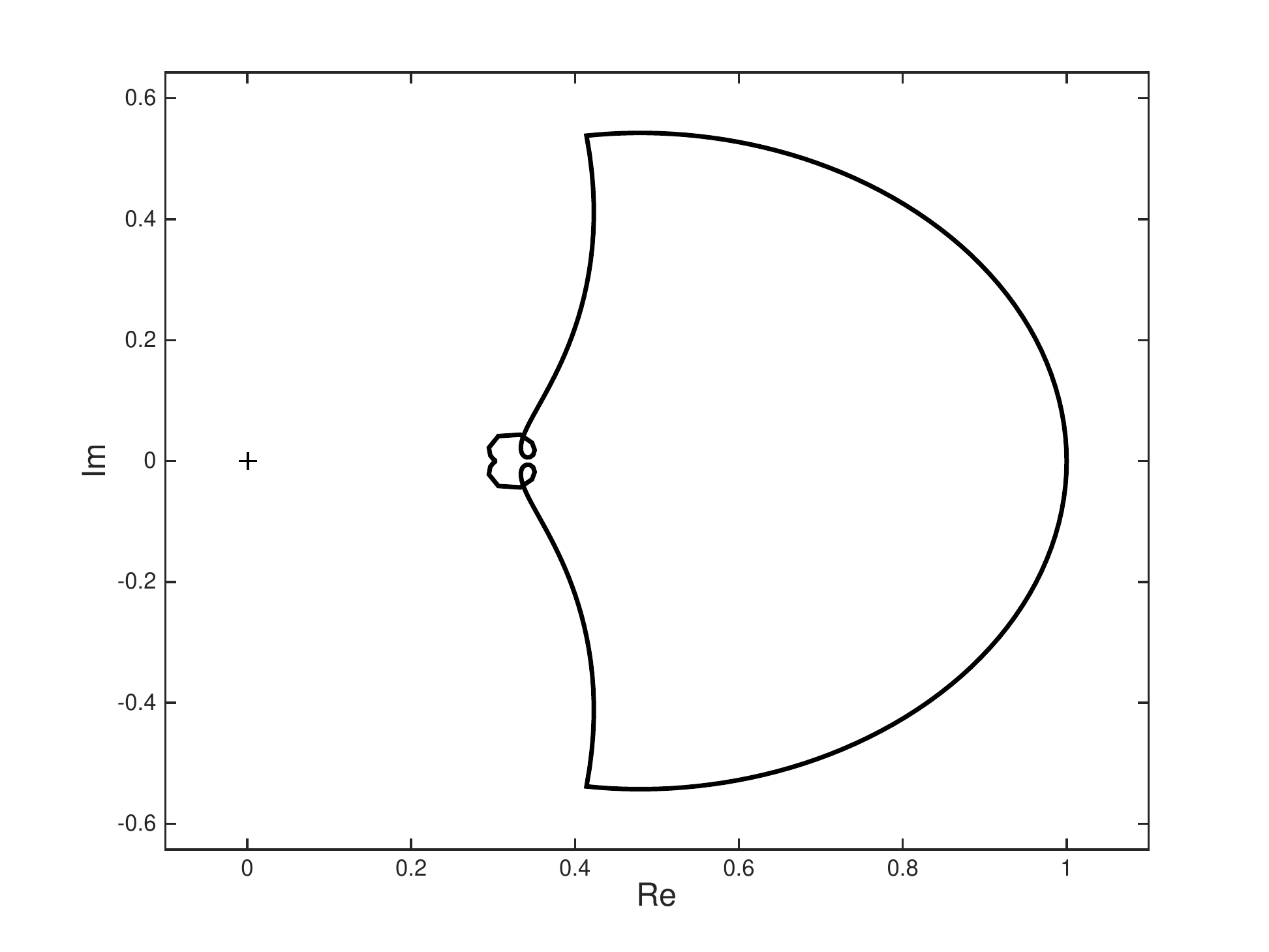} \\ 
(a) & (b)\\
\includegraphics[width=8.25cm]{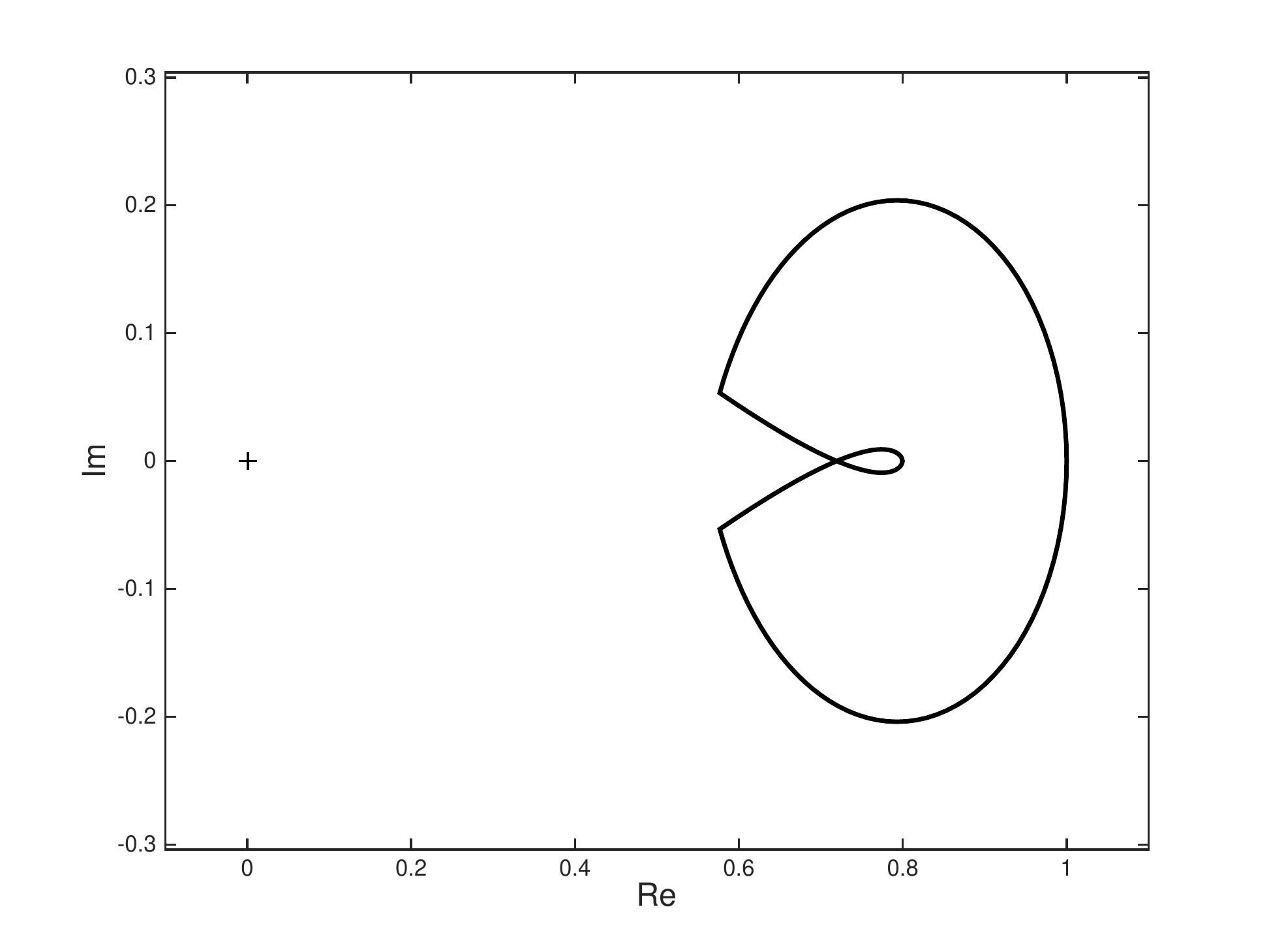} & \includegraphics[width=8.25cm]{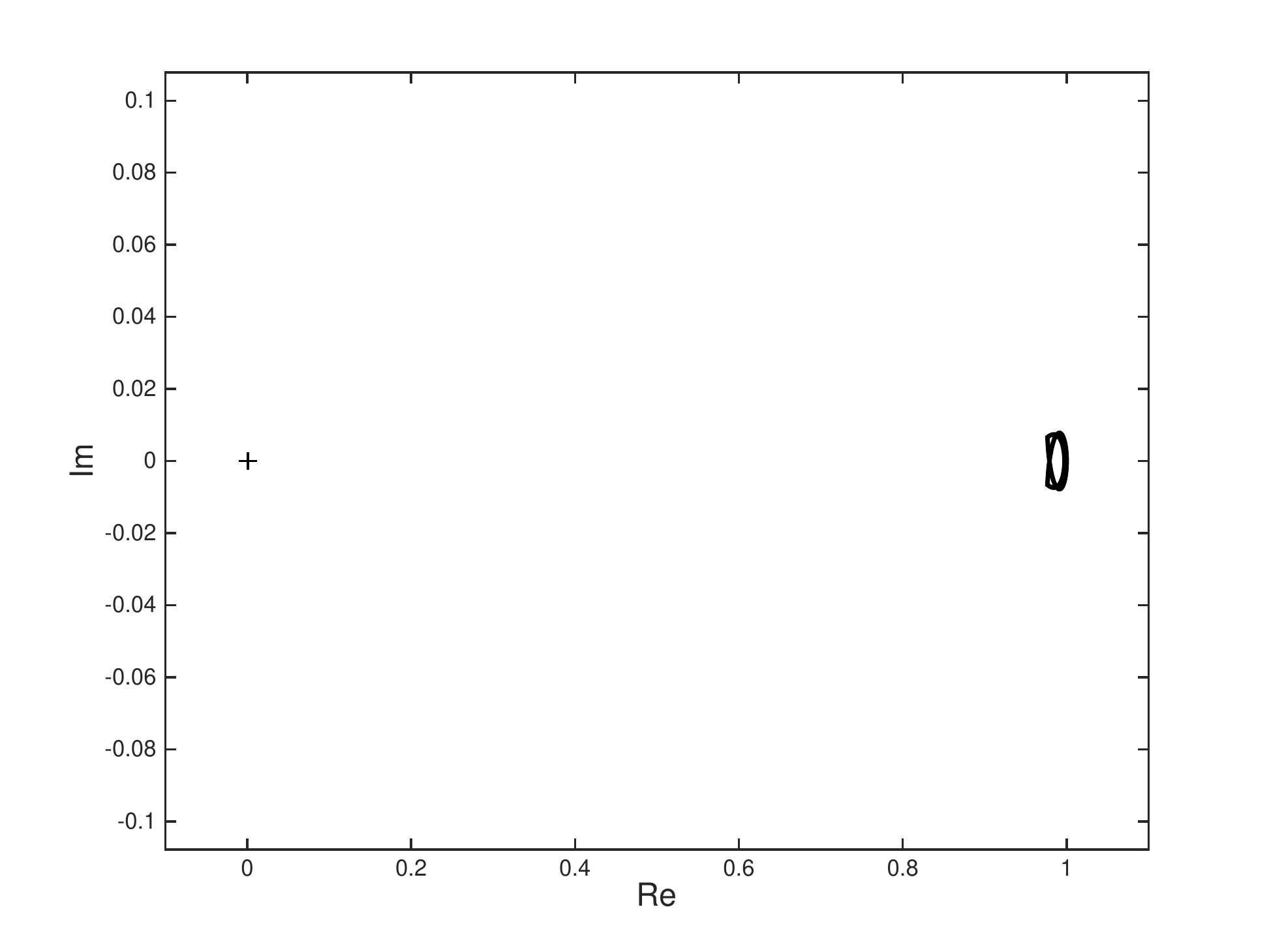} \\ 
(c) & (d)
\end{array}$
\end{center}
\caption{The Evans function output using the modified balanced flux formulation for the same four contours and parameters as Figures \ref{fig:large-small-shock-xi}--\ref{fig:no-radial-winding}.}
\label{fig:modified-balanced}
\end{figure}

As an additional verification, the second Evans function batch job (computed for each of the monatomic and diatomic cases) was performed using the no-radial formulation\footnote{In this formulation of the method, the radial factor $\gamma$ is not evolved with $x_1$ but kept constant at its initializing value at $\pm\infty$; see \cite{STABLAB} for further details.}.  These runs produced non-windy albeit non-analytic output for which all runs again have a zero winding number, by both immediate inspection and direct computation.  In particular, in Figure \ref{fig:no-radial-winding}, we compute the same Evans function contours as in Figure \ref{fig:large-small-shock-xi} except that we use the no-radial Evans function computation to demonstrate the difference between the winding of the standard balanced flux case and the no-radial case.

The third batch job(s) (again for each of the monatomic and diatomic cases) were carried out using the modified balanced pseudo-Lagrange formulation.  This is the only truly analytic formulation that we carried out: the first batch job are not analytic on the imaginary axis because $|\lambda|$ changes along the contour and thus \eqref{eq:scale_factor} changes as well.  In Figure \ref{fig:modified-balanced}, we see examples of output from this method.  Notice the similarity between this output and that of the standard balanced flux method in Figure \ref{fig:large-small-shock-xi}.

\begin{figure}[t]
\begin{center}$
\begin{array}{cc}
\includegraphics[width=8.25cm]{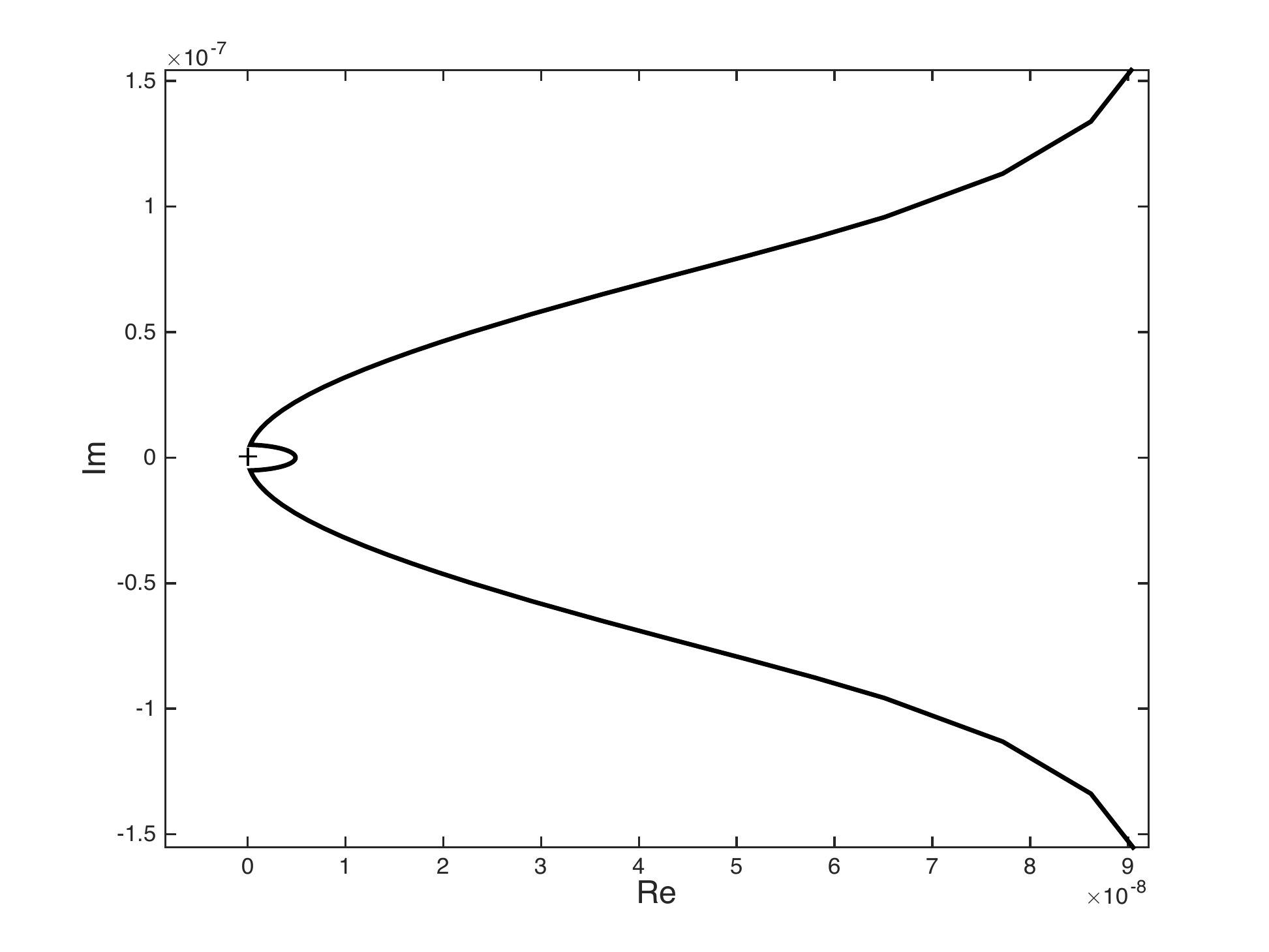} & \includegraphics[width=8.25cm]{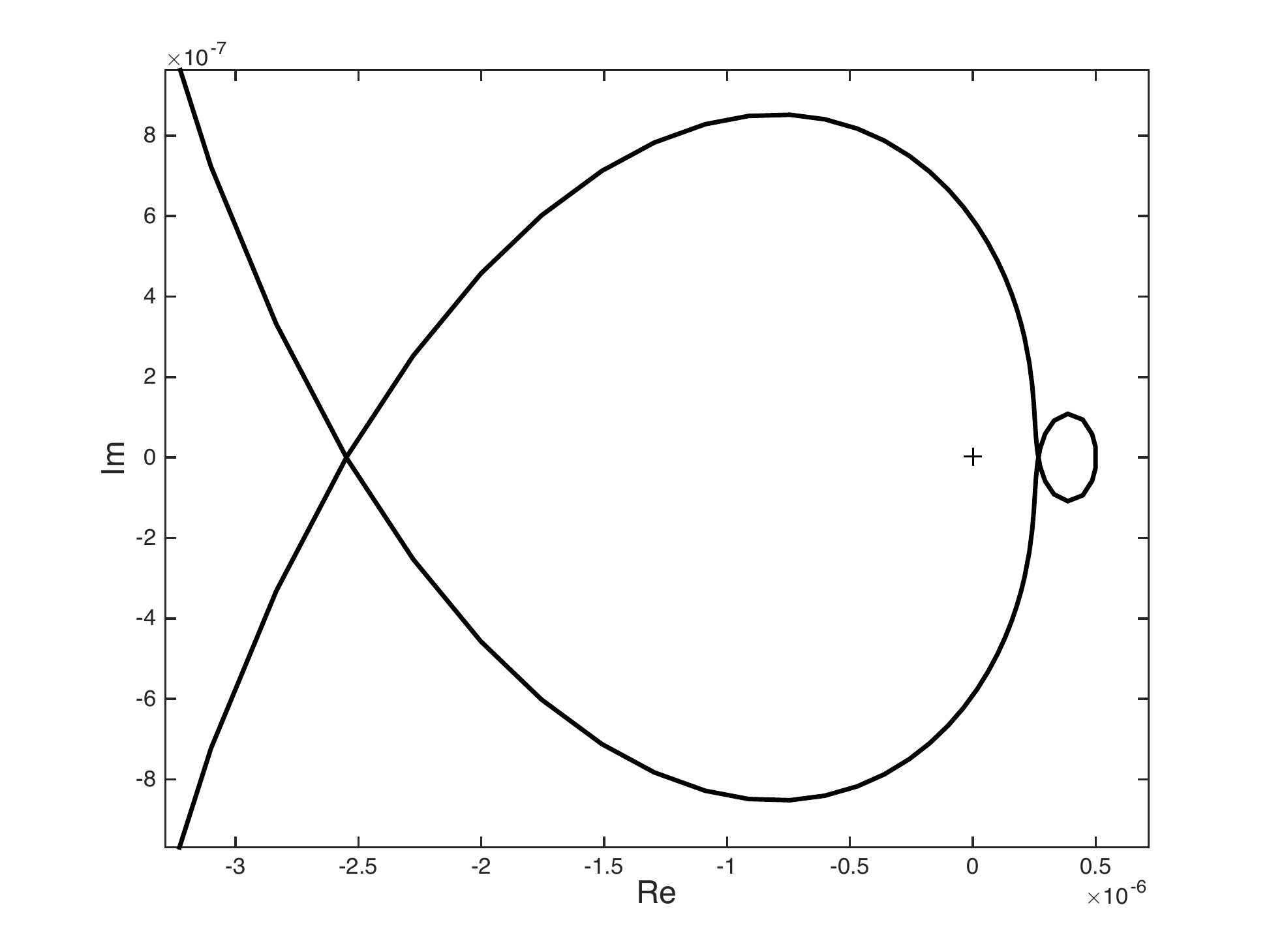} \\ 
(a) & (b)
\end{array}$
\end{center}
\caption{Examination of the output generated for the $\breve\xi=0$ case when zoomed near the origin.  In the $(a)$ standard balanced flux formulation and the $(b)$ modified balanced flux formulation.  Notice that the notch removed near the origin was necessary in the first case to prevent the output from crossing through zero, whereas no such adaptation was needed in the second case.}
\label{fig:xi-zero-modified}
\end{figure}

We remark that in the first and third batch jobs (for both monotone and diatomic cases) we also computed contours for $\xi = 0$ (for all the shock strengths).  With the standard balanced flux formulation, we have a root at $\lambda=0$ and thus winding numbers can't be computed for exactly semicircular contours.  Thus, we removed a notch on the contour around the origin so that the output wouldn't pass through it.  Since in the low-frequency limit we already have demonstrated stability, we are able to do this without affecting the integrity of our results.  With the modified balanced flux formulation, the notch need not be removed since $\lambda=0$ is not a root in this case.  In Figure \ref{fig:xi-zero-modified}, we see the $\breve\xi=0$ output zoomed near the origin for a strong shock satisfying $u_\sp=0.27$.  Notice that in the standard balanced flux version, the winding number is zero since the removed notch prevents the contour from touching the origin.  We note that in all cases, for both formulations, the winding number was found to be zero, signifying spectral stability.  We remark that spectral stability is known for $\breve\xi=0$, since this case corresponds to the 1D case studied in \cite{HLyZ}.

\subsection{Conclusions}  

There are no unstable imaginary eigenvalues for $(\xi,\lambda)$ in the complement of region \eqref{hfbound}, except possibly on a ball of radius $0.08$ about the origin for monatomic and $0.11$ for diatomic, where these values are the minimum value of our $\xi$-mesh, below which $|\lambda|$ of a comparable order require additional refinement for good resolution.  Each ``unchecked'' region is contained in a ball of sufficiently small radius ($0.1$ for monatomic and $0.12$ for diatomic) about the origin on which we have already excluded imaginary roots via our low-frequency analysis.


\section{Summary/final conclusions}
\label{sec:summary}
Combining the numerical results of Sections \ref{sec:hfb}--\ref{sec:intermediate}, we find for both monatomic and diatomic $\gamma$-law equations of state that there are no pure imaginary roots $\lambda$ of the balanced flux Evans function for any $\xi$ and any Mach number.  Thus, the number of unstable roots for any fixed $\xi$ is independent of Mach number, so that either all shock profiles are stable, or else all are unstable for a given model.  But, on the other hand, we have also shown nonexistence of unstable eigenvalues of any kind for $(\xi, \lambda)$ within the computational domain consisting of the complement of \eqref{hfbound} minus a small ball.  
Moreover, for the special value $u_\sp=.6$, we have shown nonexistence of unstable eigenvalues also for this small deleted ball, leaving only the possibility of high-frequency instability. As the Evans function varies analytically with $\xi$ outside any finite ball, it is thus sufficient to find a single $\xi$ for which there exist no unstable eigenvalues, nonexistence for all $\xi$ then following by homotopy.  Recalling that nonexistence for $\xi=0$ follows from the one-dimensional study of \cite{HLyZ}, we obtain stability for $u_\sp=.6$. Alternatively, one might appeal to an asymptotic argument like that  of \cite{FS2} in the artificial viscosity case to conclude stability in the small-amplitude, or constant-state limit $M\to1$.
That some shocks are stable implies that all shocks are stable for Mach number $M\geq 1.035$, completing the analysis.  As noted in the introduction, the $M\to 1$ limit should be treatable analytically by methods similar to those of \cite{FS2,PZ}.

\appendix

\section{Numerical tracking algorithm}\label{tracking}

In this appendix, we collect the tools used to track invariant subspaces in the high-frequency regime.  The basic approach is by now standard \cite{GZ,ZH, MZ1,Z3}; the point here is to provide an efficient numerical realization with minimal required analytical preparation.


\subsection{Quantitative tracking lemma}
\label{quant}
Consider an asymptotically constant approximately block-diagonal system of the form 
\beq
\label{Msys}
W'=(\mat{N}+\Theta)W\,,
\eeq
where 
\begin{equation}
\label{tracksys}
 W=\begin{pmatrix}W_\sm\\ W_\sp\end{pmatrix}, \quad
\mat{N}= \begin{pmatrix}\mat{N}_\sm& 0\\ 0 & \mat{N}_\sp \end{pmatrix},
\quad
\Theta= \begin{pmatrix}\Theta_{\sm\sm}& \Theta_{\sm\sp}\\ 
\Theta_{\sp\sm} & \Theta_{\sp\sp} \end{pmatrix},
\end{equation}
and there exist constants $c_\spm$ such that 
$\Re \mat{N}_-\le c_\sm\mat{I}$ and $\Re \mat{N}_\sp\ge c_\sp\mat{I}$ and  
\begin{equation}
\label{numrange}
c_\sp-c_\sm \ge \delta(x)>0\,,
\end{equation}
where $\Re \mat{N}:=\dfrac{\mat{N}+\mat{N}^*}{2}$ denotes the symmetric part of a matrix $\mat{N}$.  Denote by
\begin{equation}
\label{zeta+-}
\begin{aligned}
\zeta_\spm(x)&:= \frac{ \delta -\|\Theta_{\sm\sm}\|-\|\Theta_{\sp\sp}\|}{2\|\Theta_{\sp\sm}\|}
\pm \sqrt{ \left(\frac{ \delta -\|\Theta_{\sm\sm}\|-\|\Theta_{\sp\sp}\|}{2\|\Theta_{\sp\sm}\|}\right)^2 -\frac{\|\Theta_{\sm\sp}\|}{\|\Theta_{\sp\sm}\|} }
\end{aligned}
\end{equation}
the roots of
\begin{equation}
\label{quad}
P(\zeta,x):= \Big(-\delta +\|\Theta_{\sm\sm}\|+\|\Theta_{\sp\sp}\|\Big)\zeta+\|\Theta_{\sm\sp}\|+ \|\Theta_{\sp\sm}\|\zeta^2=0.
\end{equation}

\begin{lemma}
\label{tracklem}
Suppose that
\begin{equation}
\label{rootcond}
\delta> \|\Theta_{\sm\sm}\|+\|\Theta_{\sp\sp}\| + 2\sqrt{\|\Theta_{\sm\sp}\|\|\Theta_{\sp\sm}\|}
\end{equation}
for a system \eqref{Msys}--\eqref{tracksys}, satisfying \eqref{numrange}.
Then,
\begin{enumerate}
\item[(i)] $0<\zeta_\sm<\zeta_\sp$;
\item[(ii)] the invariant subspaces of the limiting coefficient
matrices $(\mat{N}+\Theta)(\pm \infty)$ are contained in distinct cones
\[
\Omega_\sm=\{\|W_\sm\|/\|W_\sp\|\le \zeta_\sm\}, \quad \Omega_\sp=\{\|W_\sm\|/\|W_\sp\|\ge \zeta_\sp\}\,;
\]
\item[(iii)] denoting by $S^+$ the total eigenspace of $(\mat{N}+\Theta)(+\infty)$ contained in $\Omega_\sp(+\infty)$ and $U_\sm$ the total eigenspace of $(\mat{N}+\Theta)(-\infty)$ contained in $\Omega_\sm(-\infty)$, the manifolds of solutions of \eqref{tracksys} asymptotic to $S_\sp$ at $x=+\infty$ $U_\sm$ at $x=-\infty$ are separated for all $x$, lying in $\Omega_\sp$ and $\Omega_\sm$ respectively.
\end{enumerate}
In particular, there exist no solutions of \eqref{tracksys} asymptotic to $S_\sp$ at $+\infty$ and to $U_\sm$ at $- \infty$.
\end{lemma}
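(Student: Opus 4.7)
My plan is to derive a scalar Riccati-type one-sided differential inequality for the ratio $\zeta(x):=\|W_-(x)\|/\|W_+(x)\|$ along solutions of \eqref{Msys}, and to exploit the sign structure of the quadratic $P(\zeta,x)$ of \eqref{quad}. Part (i) is purely algebraic: hypothesis \eqref{rootcond} makes both the linear coefficient $\delta-\|\Theta_{--}\|-\|\Theta_{++}\|$ and the discriminant of $P$ strictly positive, so $P$ has two distinct real roots, and Vieta's relations
\[
\zeta_-+\zeta_+ \,=\, \frac{\delta-\|\Theta_{--}\|-\|\Theta_{++}\|}{\|\Theta_{+-}\|}\,>\,0,
\qquad
\zeta_-\zeta_+ \,=\, \frac{\|\Theta_{-+}\|}{\|\Theta_{+-}\|}\,>\,0
\]
force both roots to be positive, giving $0<\zeta_-<\zeta_+$.

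For the analytic core, I will use $\tfrac{d}{dx}\|W_\pm\|^2 = 2\Re\langle W_\pm', W_\pm\rangle$ together with the numerical-range bounds on $\mat{N}_\pm$, Cauchy--Schwarz on the off-diagonal coupling terms, and $c_+-c_-\ge\delta$ to obtain (wherever the norms are positive; isolated zeros are handled by passing to Dini derivatives)
\[
\tfrac{d}{dx}\|W_+\| \;\ge\; (c_+-\|\Theta_{++}\|)\|W_+\| - \|\Theta_{+-}\|\|W_-\|,
\quad
\tfrac{d}{dx}\|W_-\| \;\le\; (c_-+\|\Theta_{--}\|)\|W_-\| + \|\Theta_{-+}\|\|W_+\|.
\]
Dividing and collecting terms delivers the master inequality $\zeta'(x)\le P(\zeta(x),x)$. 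Part (ii) then follows at once: for an eigenvector $v$ of the limiting coefficient matrix $(\mat{N}+\Theta)(\pm\infty)$ with eigenvalue $\mu$, the solution $e^{\mu x}v$ has constant ratio $\zeta(x)=\|v_-\|/\|v_+\|$, so $0\le P(\zeta)$, forcing $\zeta\le\zeta_-$ or $\zeta\ge\zeta_+$. A continuity/perturbation argument from the uncoupled case $\Theta\equiv 0$ — where $\mat{N}_+$-eigenvectors satisfy $\zeta=0\in\Omega_-$ and $\mat{N}_-$-eigenvectors satisfy $\zeta=+\infty\in\Omega_+$ — then pins down the dimensions and produces the promised total eigenspaces $S^+$ and $U_-$.

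For (iii), the sign structure of $P$ (strictly negative on $(\zeta_-,\zeta_+)$, zero at its endpoints) yields forward invariance of $\Omega_-=\{\zeta\le\zeta_-\}$: at any putative exit point $\zeta(x_0)=\zeta_-$ one has $\zeta'(x_0)\le P(\zeta_-)=0$, blocking forward escape. The symmetric backward invariance of $\Omega_+$ is obtained by applying the same scheme to $\tilde\zeta:=1/\zeta$, which satisfies $\tilde\zeta'\ge -Q(\tilde\zeta)$ with the reciprocated quadratic $Q$ whose roots are $1/\zeta_\mp$ (equivalently, one reverses the independent variable). A solution asymptotic to $U_-\subset\Omega_-(-\infty)$ at $-\infty$ therefore lies in $\Omega_-$ near $-\infty$ and hence for all $x$ by forward invariance; the mirror statement confines $S^+$-asymptotic solutions to $\Omega_+$ for all $x$. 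Disjointness $\Omega_-\cap\Omega_+=\emptyset$ — immediate from $\zeta_-<\zeta_+$ — then excludes any solution realizing both asymptotics. The main technical subtlety I anticipate is the rigorous treatment of $\zeta$ at the (isolated) zeros of $\|W_\pm\|$ and of the degenerate cases $\|\Theta_{+-}\|=0$ or $\|\Theta_{-+}\|=0$; both are standard for this gap-lemma style of argument (cf.\ \cite{GZ,ZH,MZ1,Z3}) but deserve a careful Dini-derivative formulation to make the comparison principle rigorous.
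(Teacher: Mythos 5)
Your proposal is correct and follows essentially the same route as the paper's proof: the identical one-sided differential inequalities for $\|W_-\|$ and $\|W_+\|$, the same Riccati inequality $\zeta'\le P(\zeta,x)$ for $\zeta=\|W_-\|/\|W_+\|$, and the same forward/backward invariance and attractivity of the cones $\Omega_\mp$ to confine and separate the two solution manifolds. Your additional care — Vieta's relations for (i), Dini derivatives at zeros of $\|W_\pm\|$, and the degenerate cases $\|\Theta_{+-}\|=0$ or $\|\Theta_{-+}\|=0$ — only sharpens the paper's argument, and your assignment of which asymptotic manifold lies in which cone in (iii) is the one consistent with the lemma statement (the paper's own proof text appears to transpose the labels $\Omega_\pm$ in its final paragraph).
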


\begin{remark}
\label{noevans}
In the case $c_\sm<0$, $c_\sp>0$, $S_\sp$ and $U_\sm$ correspond to the stable subspace at $+\infty$ and the unstable subspace at $-\infty$ of the limiting constant-coefficient matrices, and we may conclude nonexistence of decaying solutions of \eqref{tracksys}, or nonvanishing of the associated Evans function.
\end{remark}

\begin{proof}
From \eqref{tracksys}, we obtain readily
\begin{subequations}
\begin{align}
\|W_\sm\|'&\leq  c_\sm\|W_\sm\| + \|\Theta_{\sm\sm}\|\|W_\sm\|+ \|\Theta_{\sm\sp}\|\|W_\sp\|,\\
\|W_\sp\|'&\geq  c_\sp\|W_\sp\| - \|\Theta_{\sp\sm}\|\|W_\sm\| - \|\Theta_{\sp\sp}\|\|W_\sp\|\,,
\end{align}
\end{subequations}
from which, defining $\zeta:= |W_\sm|/|W_\sp|$, 
we obtain by a straightforward computation the Riccati equation
$\zeta'\le P(\zeta,x)$.

Consulting \eqref{quad}, we see that $\zeta'<0$ on the interval
$\zeta_-<\zeta<\zeta_+$,
whence $\Omega_-:=\{\zeta\le \zeta_-\}$ is an invariant region
under the forward flow of \eqref{tracksys}; moreover, this region
is exponentially attracting for $\zeta < \zeta_+$.
A symmetric argument yields that $\Omega_+:=\{\zeta \ge \zeta_+\}$ is
invariant under the backward flow of \eqref{tracksys}, and exponentially
attracting for $\zeta >\zeta_-$.
Specializing these observations to the constant-coefficient limiting
systems at $x=-\infty$ and $x=+\infty$, we find that the invariant 
subspaces of the limiting coefficient matrices must lie in 
$\Omega_-$ or $\Omega_+$.  (This is immediate in the diagonalizable
case; the general case follows by a limiting argument.)

By  forward (resp. backward) invariance of $\Omega_-$ (resp. $\Omega_+$),
under the full, variable-coefficient flow, we thus find that the manifold
of solutions initiated along $U^+$ at $x=-\infty$ 
lies for all $x$ in $\Omega_+$ while the manifold of
solutions initiated in $S^+$ at $x=+\infty$ lies for all $x$ in $\Omega_-$.
Since $\Omega_-$ and $\Omega_+$ are distinct,
we may conclude that under condition \eqref{rootcond}
there are no solutions asymptotic to both $U_\sm$ and $S_\sp$.
\end{proof}

\subsection{Numerical block-diagonalization}
\label{blockdiag}
We now show how to compute the numerical block diagonalization given in \eqref{eq:block-form}.  We need to choose $R$ to be smooth in $x_1$ so that the $LR'$ term is well defined and well behaved.

Assume that $\mat{D}_{1/2}$ has complementary eigenspaces $\Sigma_\pm$, corresponding to its stable and unstable subspaces, with spectral gap
\begin{equation}
\label{gapc}
\min \sigma(\Sigma_+)\ge d_+, \quad \max \sigma(\Sigma_-)\ge d_-,
\quad d_+-d_->0.
\end{equation}
We reduce numerically to block-diagonal form.  Denoting by $\Pi_\pm$ the associated analytic spectral projections of $\mat{D}_{1/2}$, we may define bases $R_\pm$ of basis vectors, arranged in columns, $R = \begin{pmatrix} R_-& R_+\end{pmatrix}$, as solutions of Kato's ODE \cite{Kato}
\begin{equation}
\label{kato}
R_\pm'=(\Pi \, \Pi'-\Pi' \,\Pi)_\pm R_\pm,
\end{equation}
initializing at $x=-\infty$ by some choice of orthonormal bases for each subspace.  The solution of \eqref{kato} may be carried out numerically as described in \cite{BrZ, HuZ2, BHRZ}.  Setting $L:=R^{-1}$ completes the computation and yields  \eqref{eq:block-form}.

\subsection{Numerical symmetrization}
We conclude by prescribing a ``symmetrizing'' transformation converting the spectral gap condition 
\eqref{gapc} into a numerical range condition \eqref{numrange} if this is necessary: that is, if \eqref{numrange} does not hold already after block-diagonalization.\footnote{As noted earlier, in the numerical investigations of this paper, this step is not needed.}

\begin{lemma}[Lyapunov's Lemma \cite{BS}]
\label{lem:lyapunov}
Suppose that $\mat{N}$ is a stable (unstable) matrix, i.e., 
\[
\Re\sigma(\mat{N})<0\,,\;\;(>0)\,.
\]
Then, there exists a $\mat{P}$ positive and symmetric such that 
\beq
\Re(\mat{P}\mat{N})=\frac{1}{2}(\mat{P}\mat{N}+(\mat{P}\mat{N})^*)<0\,,\;\;(>0)\,.
\eeq
The matrix $\mat{P}$ may be characterized as a solution of the Sylvester equation
\[
\mat{P}\mat{N}+\mat{N}^*\mat{P}=-\mat{Q}\,,
\]
for some positive (negative) symmetric matrix $\mat{Q}$. 
\end{lemma}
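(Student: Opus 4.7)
The plan is to prove Lyapunov's Lemma by explicit integral construction of $\mat{P}$, relying only on the exponential decay of $e^{\mat{N}t}$ guaranteed by stability. Given $\mat{N}$ with $\Re\sigma(\mat{N})<0$ and any positive symmetric $\mat{Q}$, I would define
\[
\mat{P} := \int_0^\infty e^{\mat{N}^* t}\,\mat{Q}\, e^{\mat{N} t}\,dt,
\]
and claim that $\mat{P}$ is symmetric, positive definite, and satisfies $\mat{P}\mat{N}+\mat{N}^*\mat{P}=-\mat{Q}$.

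The first step is convergence. Stability of $\mat{N}$ furnishes constants $C>0$ and $0<\epsilon<-\max\Re\sigma(\mat{N})$ with $\|e^{\mat{N} t}\|\le C e^{-\epsilon t}$ for $t\ge 0$ (a standard consequence of the Jordan form or the spectral mapping theorem); the integrand is therefore bounded in norm by $C^2\|\mat{Q}\|e^{-2\epsilon t}$ and the integral exists as a bounded matrix. Symmetry of $\mat{P}$ is then immediate from symmetry of $\mat{Q}$ and the identity $(e^{\mat{N}^* t}\mat{Q}\,e^{\mat{N} t})^{*}=e^{\mat{N}^* t}\mat{Q}\,e^{\mat{N} t}$.

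Next, to verify the Sylvester equation, I would differentiate the integrand and use the fundamental theorem of calculus: since
\[
\frac{d}{dt}\bigl(e^{\mat{N}^* t}\mat{Q}\,e^{\mat{N} t}\bigr) = \mat{N}^* e^{\mat{N}^* t}\mat{Q}\,e^{\mat{N} t} + e^{\mat{N}^* t}\mat{Q}\,e^{\mat{N} t}\mat{N},
\]
integration from $0$ to $\infty$ yields $\mat{N}^*\mat{P}+\mat{P}\mat{N}=[e^{\mat{N}^* t}\mat{Q}\,e^{\mat{N} t}]_0^\infty=-\mat{Q}$, where the boundary term at infinity vanishes by the decay estimate. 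For positive definiteness, for any nonzero vector $v$ one has
\[
v^*\mat{P}v=\int_0^\infty \bigl(e^{\mat{N} t}v\bigr)^*\mat{Q}\bigl(e^{\mat{N} t}v\bigr)\,dt>0,
\]
since $e^{\mat{N} t}v$ is nonzero for every $t$ (the exponential being invertible) and $\mat{Q}$ is positive definite.

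Finally, the unstable case $\Re\sigma(\mat{N})>0$ reduces at once to the stable case by applying the construction to $-\mat{N}$ (equivalently, integrating over $(-\infty,0]$ instead of $[0,\infty)$), which flips the sign of the resulting Sylvester identity. I do not anticipate any serious obstacle: the only quantitative ingredient is the exponential decay bound on $e^{\mat{N} t}$, and every other step is a routine application of differentiation under the integral and a positivity computation.
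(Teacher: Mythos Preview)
Your proof is correct and is the classical integral construction for the Lyapunov equation. Note, however, that the paper does not actually supply a proof of this lemma: it is stated with a citation to \cite{BS} (Bartels--Stewart) as a known result from numerical linear algebra, and is then used as a black box in the subsequent symmetrization step. So there is no ``paper's own proof'' to compare against; your argument simply fills in the standard justification that the authors chose to omit.
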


Now, suppose that $\mat{N}$ is block diagonal with the form 
\beq
\mat{N}=\bpm \mat{N}_\sp & 0 \\ 0 & \mat{N}_\sm \epm\,,
\eeq
and that there exist $\delta_\spm$ describing a spectral gap between the blocks: 
\beq
\Re\sigma(\mat{N}_\sp)\geq\delta_\sp>\delta_\sm\geq\Re\sigma(\mat{N}_\sm)\,.
\eeq
Then we shall use the Lyapunov lemma to identify $\mat{P}_\spm$ positive and symmetric so that
\beq\label{eq:newgap}
\Re(\mat{P}_\sp^{1/2}\mat{N}_\sp\mat{P}_\sp^{-1/2})
\geq c_\sp>c_\sm\geq 
\Re(\mat{P}_\sm^{1/2}\mat{N}_\sm\mat{P}_\sm^{-1/2})
\eeq
for $c_\spm$ chosen to satisfy $\delta_\sp\geq c_\sp>c_\sm\geq\delta_\sm$. To see this, simply observe that $\Re\sigma(\mat{N}_\sp-c_\sp\mat{I})>0$, whence Lemma \ref{lem:lyapunov} implies the existence of a positive matrix $\mat{P}_\sp$ such that 
\beq\label{eq:coord}
\Re(\mat{P}_\sp(\mat{N}_\sp-c_\sp\mat{I}))>0\,.
\eeq
But, evidently, \eqref{eq:coord} is equivalent to $\mat{P}_\sp^{1/2}(\mat{N}_\sp-c_\sp\mat{I})\mat{P}^{-1/2}>0$, and this implies the first inequality in \eqref{eq:newgap}.
Defining now
\beq\label{Mdef}
M_+:= \Re(\mat{P}_\sp^{1/2}\mat{N}_\sp\mat{P}_\sp^{-1/2}),
\quad
M_-:=\Re(\mat{P}_\sm^{1/2}\mat{N}_\sm\mat{P}_\sm^{-1/2}),
\eeq
$ \Phi:= \mat{P}^{1/2}\Psi \mat{P}^{-1/2}- \mat{P}^{1/2}(\mat{P}^{-1/2})_{x_1}$, and $\mat{P}:={\rm blockdiag}\{\mat{P}_+,\mat{P}_-\}$,
we find that the coordinate change $\mat{P}^{1/2}Y= W$ yields, finally, a system
of the desired form \eqref{Msys}--\eqref{tracksys} satisfying \eqref{numrange}.
The solution of Lyapunov's equation is a standard numerical linear algebra problem \cite{BS}.

\subsection{Norm Bounds for the Crude Tracking Estimate}

\begin{lemma}
\label{lem:matrix-norm}
For $\breve r>0$, we have that $\left\| \begin{pmatrix} I & \breve r^{-1/2} I \\ 0 & I \end{pmatrix} \right\|^2 = 1 + \dfrac{1+\sqrt{1+4\breve r}}{2 \breve r}$.  Moreover, when $\breve r\geq 1$, we have $\left\| \begin{pmatrix} I & \breve r^{-1/2} I \\ 0 & I \end{pmatrix} \right\|^2 \leq 1 + \dfrac{2}{\sqrt{\breve r}}$.
\end{lemma}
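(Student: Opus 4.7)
Proof proposal. The operator norm squared equals the largest eigenvalue of $M^* M$ where $M = \begin{pmatrix} I & \breve r^{-1/2} I \\ 0 & I \end{pmatrix}$. A direct multiplication gives
\[
M^* M = \begin{pmatrix} I & \breve r^{-1/2} I \\ \breve r^{-1/2} I & (1+\breve r^{-1}) I \end{pmatrix}.
\]
Because every block is a scalar multiple of $I$, the spectrum of $M^*M$ coincides (with multiplicities) with that of the scalar $2\times 2$ matrix obtained by deleting the $I$'s, so I would reduce the eigenvalue problem to solving
\[
\det\begin{pmatrix} 1-\lambda & \breve r^{-1/2}  \\ \breve r^{-1/2}  & 1+\breve r^{-1}-\lambda \end{pmatrix} = \lambda^2 - (2+\breve r^{-1})\lambda + 1 = 0.
\]

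Next I would apply the quadratic formula and simplify the discriminant: $(2+\breve r^{-1})^2 - 4 = 4\breve r^{-1} + \breve r^{-2} = \breve r^{-2}(1 + 4\breve r)$, so that $\sqrt{(2+\breve r^{-1})^2 - 4} = \breve r^{-1}\sqrt{1+4\breve r}$. Taking the larger root then yields
\[
\lambda_{\max} = 1 + \frac{1+\sqrt{1+4\breve r}}{2\breve r},
\]
which is the claimed identity.

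For the second assertion, I would show that $1 + \sqrt{1+4\breve r} \le 4\sqrt{\breve r}$ whenever $\breve r \ge 1$; dividing by $2\breve r$ and adding $1$ then gives $\lambda_{\max} \le 1 + 2/\sqrt{\breve r}$. Since the right-hand side $4\sqrt{\breve r}-1$ is positive for $\breve r\ge 1$, I can square to reduce to $1+4\breve r \le 16\breve r - 8\sqrt{\breve r} + 1$, i.e.\ $8\sqrt{\breve r}\le 12\breve r$, which is equivalent to $\sqrt{\breve r}\ge 2/3$ and holds for all $\breve r\ge 1$.

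I do not anticipate any serious obstacle; the only mild subtlety is recognizing that the block-scalar structure reduces the $2n \times 2n$ eigenvalue problem to a scalar quadratic, after which everything is an elementary computation. The second bound is purely algebraic and amounts to showing positivity of a simple polynomial on $[1,\infty)$.
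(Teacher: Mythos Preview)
Your proof is correct. The paper states this lemma without proof (treating it as an elementary computation), so there is nothing to compare against; your reduction via the block-scalar structure of $M^*M$ to a scalar $2\times 2$ eigenvalue problem is exactly the natural argument, and the algebraic verification of the bound for $\breve r\ge 1$ is clean.
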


\begin{lemma}
\label{lem:V}
For $\mat{U}$ given in \eqref{psi}, we have that
\[
\| \mat{U} \|^2 = 1 + \dfrac{\|\psi\|^2 + \|\psi\| \sqrt{\|\psi\|^2 + 4}}{2}.
\]
\end{lemma}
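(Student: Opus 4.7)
The plan is to compute $\|\mat{U}\|^2 = \lambda_{\max}(\mat{U}^*\mat{U})$ directly by exploiting the block-triangular structure of $\mat{U}$. Since $\mat{U} = \begin{pmatrix} 1 & 0 \\ \psi & \mat{I}_6\end{pmatrix}$ with $\psi\in\mathbb{C}^6$, a short multiplication gives
\[
\mat{U}^*\mat{U} \;=\; \begin{pmatrix} 1+\|\psi\|^2 & \psi^* \\ \psi & \mat{I}_6\end{pmatrix}.
\]

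Next I would identify the spectrum of this $7\times 7$ Hermitian matrix by separating out the obvious invariant subspaces. Any vector of the form $\begin{pmatrix} 0 \\ w\end{pmatrix}$ with $w\in\mathbb{C}^6$ and $\psi^*w=0$ is an eigenvector with eigenvalue $1$; this accounts for a $5$-dimensional eigenspace. The orthogonal complement of these vectors inside $\mathbb{C}^7$ is the $2$-dimensional subspace $V$ spanned by $e_1:=\begin{pmatrix}1\\0\end{pmatrix}$ and $\tilde\psi:=\begin{pmatrix}0\\\psi/\|\psi\|\end{pmatrix}$ (assuming $\psi\neq 0$; the case $\psi=0$ is trivial since then $\mat{U}=\mat{I}_7$). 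On the orthonormal basis $\{e_1,\tilde\psi\}$ of $V$, the restriction of $\mat{U}^*\mat{U}$ is represented by the $2\times 2$ matrix
\[
\begin{pmatrix} 1+\|\psi\|^2 & \|\psi\| \\ \|\psi\| & 1\end{pmatrix}.
\]

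The eigenvalues of this $2\times 2$ matrix are the roots of the characteristic polynomial $\lambda^2 - (2+\|\psi\|^2)\lambda + 1 = 0$, namely
\[
\lambda_\pm \;=\; \frac{(2+\|\psi\|^2)\pm\sqrt{(2+\|\psi\|^2)^2 - 4}}{2} \;=\; 1 + \frac{\|\psi\|^2 \pm \|\psi\|\sqrt{\|\psi\|^2+4}}{2},
\]
using that $(2+\|\psi\|^2)^2-4 = \|\psi\|^2(\|\psi\|^2+4)$. Since $\lambda_+\ge 1\ge \lambda_->0$ and the remaining eigenvalues on the orthogonal complement are all equal to $1$, we conclude
\[
\|\mat{U}\|^2 \;=\; \lambda_{\max}(\mat{U}^*\mat{U}) \;=\; \lambda_+ \;=\; 1 + \frac{\|\psi\|^2+\|\psi\|\sqrt{\|\psi\|^2+4}}{2},
\]
which is the stated identity. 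The calculation is essentially routine linear algebra; the only potential subtlety is the bookkeeping to confirm that the $5$-dimensional eigenspace at $\lambda=1$ really is orthogonal to $V$ and that no larger eigenvalue is hiding outside $V$, but both facts follow immediately from the block decomposition $\mathbb{C}^7 = V\oplus V^\perp$ and $\mat{U}^*\mat{U}$-invariance of each summand.
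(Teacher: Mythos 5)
Your proof is correct. The paper states Lemma \ref{lem:V} without proof, so there is no argument to compare against; your computation --- observing that $\mat{U}^*\mat{U}=\bigl(\begin{smallmatrix}1+\|\psi\|^2 & \psi^*\\ \psi & \mat{I}_6\end{smallmatrix}\bigr)$ acts as the identity on the $5$-dimensional space $\{(0,w)^T:\psi^*w=0\}$ and reduces on the invariant complement $\Span\{e_1,(0,\psi/\|\psi\|)^T\}$ to the $2\times2$ matrix $\bigl(\begin{smallmatrix}1+\|\psi\|^2 & \|\psi\|\\ \|\psi\| & 1\end{smallmatrix}\bigr)$, whose larger root of $\lambda^2-(2+\|\psi\|^2)\lambda+1=0$ gives the stated identity --- is the natural argument, and you have correctly handled the degenerate case $\psi=0$ and verified that no larger eigenvalue can occur on the orthogonal complement.
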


\begin{lemma}
\label{inhom}
For all values of $\hat \rho$, $\breve\lambda$, $\breve r \geq 1$, let $b = \sup \dfrac{\|\beta\|^{-1}\|q\|}{\|\beta^{-1}q\|}$.  We have:
\begin{enumerate}
\item $M_0 := \sup \|( \hat \rho \breve \lambda \breve r^{1/2}\mat{I}_6 + \beta)^{-1}q\| \leq \displaystyle\sup_{0 \leq s \leq (1+b)\|\beta\|} \|(s-i\beta)^{-1}q\|$.
\item $M_1 := \sup \|\hat \rho \breve \lambda \breve r^{1/2} ( \hat \rho \breve \lambda \breve r^{1/2}\mat{I}_6 + \beta)^{-1}\| \leq \displaystyle\min\left\{ 2,\displaystyle\sup_{0\leq s\leq 2\|\beta\|} \|s(s-i\beta)^{-1}\|\right\}$.
\item $\| \psi_{x_1}\| \leq \left( \displaystyle\sup_x\left|\dfrac{p_{x_1}}{p}\right| + M_1 \displaystyle\sup_x\left|\dfrac{\rho_{x_1}}{\rho}\right| \sqrt{1+\dfrac{2}{\sqrt{r}}} \right) M_0$.
\end{enumerate}
\end{lemma}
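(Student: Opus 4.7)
The plan is to exploit that $\breve\lambda=i\breve\tau$ is purely imaginary under the earlier reduction, so that $\hat\rho\breve\lambda\breve r^{1/2}=is$ with $s:=\hat\rho\breve\tau\breve r^{1/2}\ge 0$ a nonnegative real scalar. Then $(\hat\rho\breve\lambda\breve r^{1/2}\mat{I}_6+\beta)^{-1}=-i(s-i\beta)^{-1}$, and each supremum reduces to one in the single variable $s$, with $\beta$ and $q$ varying through the remaining parameters. A Neumann series in $\beta/s$ then produces the restriction to a compact range of $s$ in parts $(i)$--$(ii)$.

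For $(i)$, when $s>\|\beta\|$ Neumann yields $\|(s-i\beta)^{-1}q\|\le\|q\|/(s-\|\beta\|)$. Using $\|q\|\le b\|\beta\|\,\|\beta^{-1}q\|$, which is just the definition of $b$, one sees that for $s\ge(1+b)\|\beta\|$ this bound is at most $\|\beta^{-1}q\|$, which is the value of $\|(s-i\beta)^{-1}q\|$ at $s=0$; hence the supremum over $s\ge 0$ is attained on $[0,(1+b)\|\beta\|]$. For $(ii)$, writing $s(s-i\beta)^{-1}=(\mat{I}_6-i\beta/s)^{-1}$, Neumann for $s\ge 2\|\beta\|$ yields $\|(\mat{I}_6-i\beta/s)^{-1}\|\le 1/(1-1/2)=2$, so the tail $s>2\|\beta\|$ contributes at most $2$, while the compact range $[0,2\|\beta\|]$ handles the rest; the $\min$ of the two resulting upper bounds is then valid.

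For $(iii)$, I differentiate $\psi=-(\hat\rho\breve\lambda\breve r^{1/2}\mat{I}_6+\beta)^{-1}q$ in $x_1$ to recover formula \eqref{psi'}. The first term is bounded directly: from the explicit form $q=(0,-i\breve\xi\hat p,0,-\tilde\mu^{-1}\hat p,0,0)^T$ one has $q_{x_1}=(\hat p_{x_1}/\hat p)\,q$, yielding $\|(\hat\rho\breve\lambda\breve r^{1/2}\mat{I}_6+\beta)^{-1}q_{x_1}\|\le|\hat p_{x_1}/\hat p|\,M_0$. For the second term I observe that $\beta$ depends on $x_1$ only through $\hat\rho$, and only via the top-right $3\times 3$ block of entries $\breve\lambda\hat\rho$; consequently $\beta_{x_1}$ equals $i\breve\tau\hat\rho_{x_1}$ times the $(1,2)$-block-unit matrix, and summing with $\hat\rho_{x_1}\breve\lambda\breve r^{1/2}\mat{I}_6$ factors $i\breve\tau\hat\rho_{x_1}\breve r^{1/2}$ out of a block-upper-triangular matrix with $\mat{I}_3$ diagonal blocks and $\breve r^{-1/2}\mat{I}_3$ super-diagonal block. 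Lemma \ref{lem:matrix-norm} bounds this latter matrix in norm by $\sqrt{1+2/\sqrt{\breve r}}$, so the middle factor has norm at most $\breve\tau|\hat\rho_{x_1}|\breve r^{1/2}\sqrt{1+2/\sqrt{\breve r}}$. Applying $M_1$ to one resolvent via $\|(\hat\rho\breve\lambda\breve r^{1/2}\mat{I}_6+\beta)^{-1}\|\le M_1/(\hat\rho\breve\tau\breve r^{1/2})$ and $M_0$ to the product of the other resolvent with $q$ yields a second-term bound of $M_1(|\hat\rho_{x_1}|/\hat\rho)\sqrt{1+2/\sqrt{\breve r}}\,M_0$, which combined with the first-term bound proves $(iii)$.

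The principal obstacle is the verification in $(iii)$ that the $x_1$-dependence of $\beta$ through $\hat\rho$ combines with $\hat\rho_{x_1}\breve\lambda\breve r^{1/2}\mat{I}_6$ into precisely the block-upper-triangular form required for Lemma \ref{lem:matrix-norm}, and that the resulting powers of $\breve\tau$ and $\breve r^{1/2}$ cancel against the $1/s$ hidden in the $M_1$ bound on the resolvent, leaving only the scale-invariant logarithmic derivatives $|\hat p_{x_1}/\hat p|$ and $|\hat\rho_{x_1}/\hat\rho|$ that are uniformly controllable along the profile. The degenerate case $\breve\tau=0$ is harmless, since then $\beta_{x_1}=0$ and only the first term contributes.
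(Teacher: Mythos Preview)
Your argument is correct and essentially identical to the paper's: the same substitution $\hat\rho\breve\lambda\breve r^{1/2}=is$, the same Neumann-series tail bounds in (i)--(ii), and in (iii) the same factorization $\hat\rho_{x_1}\breve\lambda\breve r^{1/2}\mat{I}_6+\beta_{x_1}=\tfrac{\hat\rho_{x_1}}{\hat\rho}\,\hat\rho\breve\lambda\breve r^{1/2}\bigl(\begin{smallmatrix}\mat{I}_3 & \breve r^{-1/2}\mat{I}_3\\ 0 & \mat{I}_3\end{smallmatrix}\bigr)$ followed by grouping the scalar $\hat\rho\breve\lambda\breve r^{1/2}$ with one resolvent to invoke $M_1$ (your division by $s$ and the paper's direct grouping are equivalent, and you correctly note the $\breve\tau=0$ case is trivial). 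One small point: in (ii) your reasoning in fact yields the bound $\max\{2,\sup_{[0,2\|\beta\|]}\}$ rather than $\min$; the paper's proof likewise only establishes the tail bound $\le 2$ and leaves the compact-range supremum to numerics, and the reported value $M_1=2.0057$ confirms that $\max$ is what is actually used.
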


\begin{proof}\hfill
\begin{enumerate}
\item Expressing $\hat \rho \breve \lambda \breve r^{1/2}=is$, $s$ nonnegative real,
we reduce the problem to that of bounding $\|\psi\| = \|(sI-i\beta)^{-1}q\|$ for $s\in [0,+\infty)$.  For $s\geq (1+b)\|\beta\|$, we have
\[
\|(isI+\beta)^{-1}q\|=\|s^{-1}(I- i\beta/s)^{-1}q\| \leq \frac{\|q\|}{s-\|\beta\|} \leq \dfrac{\|q\|}{b\|\beta\|} \leq \|\beta^{-1}q\|,
\]
whence the first inequality follows by comparison with the value at $s=0$.  The second inequality follows by invertibility for each $s\geq 0$ of $(s-i\beta)$, a consequence of the uniform spectral gap of $\beta(\breve \xi, \breve \lambda)$, as established in Lemma \ref{betalem} below.
\item Following $(i)$, we have for $s\geq 2\|\beta\|$
\[
\|s (isI+\beta)^{-1}\| \leq \frac{1}{1-\|\beta\|/s} \leq \dfrac{1}{1-\frac{1}{2}} = 2.
\]
\item Since $q_{x_1} = q \cdot \dfrac{p_{x_1}}{p}$ and $\hat \rho \breve \lambda \breve r^{1/2} I + \beta_{x_1} = \dfrac{\hat \rho_{x_1}}{\hat \rho} \hat \rho \breve \lambda \breve r^{1/2} \begin{pmatrix} I & \breve r^{-1/2}I\\ 0 & I \end{pmatrix}$, we have that
\begin{align*}
\|\psi_{x_1}\| &=\| ( \hat \rho \breve \lambda \breve r^{1/2}\mat{I}_6 + \beta)^{-1}q_{x_1} +(\hat \rho \breve \lambda \breve r^{1/2}\mat{I}_6 + \beta)^{-1} (\hat \rho_{x_1} \breve \lambda \breve r^{1/2}\mat{I}_6 + \beta_{x_1}) (\hat \rho \breve \lambda \breve r^{1/2}\mat{I}_6 + \beta)^{-1} q\| \\
&\leq \sup_x \left| \dfrac{p_{x_1}}{p}\right| \| \psi \| + \|\hat \rho \breve \lambda \breve r^{1/2} (\hat \rho \breve \lambda \breve r^{1/2}\mat{I}_6 + \beta)^{-1}\| \sup_x \left| \dfrac{\hat \rho_{x_1}}{\hat \rho}\right|  \left\| \begin{pmatrix} I & \breve r^{-1/2}I\\ 0 & I \end{pmatrix}\right\| \|\psi \| \\
&\leq \left( \sup_x \left| \dfrac{p_{x_1}}{p}\right| + M_1 \sup_x\left| \dfrac{\hat \rho_{x_1}}{\hat \rho}\right|  \left\| \begin{pmatrix} I & \breve r^{-1/2}I\\ 0 & I \end{pmatrix}\right\| \right) M_0.
\end{align*}
The result follows from Lemma \ref{lem:matrix-norm}.
\end{enumerate}
\end{proof}

\section{Symbolic analysis of $\beta$}
\label{symbolic}
In this appendix, we verify the spectral information asserted for \eqref{beta} appearing in the high-frequency analysis of Section \ref{sec:hfb}.  This is not logically needed for our numerical study, where we anyway verify hyperbolicity parameter by parameter in the course of numerical block-diagonalization.  However, it is important in understanding why the method works.

\begin{lemma}[Gu\`es et al.\ \cite{GMWZ}
]\label{betalem}
Fix $\mu>|\eta|\ge 0$, $\nu>0$. For each fixed $x_1$, and on the range $-1\le  \xi\le 1$ and $ \lambda=i \tau$, $\tau=1- \xi^2$, $\beta$ is uniformly hyperbolic, i.e., $\Re \sigma(\beta)$ is uniformly bounded from zero. This hyperbolicity holds over the entire range of shock strengths $0\leq e_\sm\leq e_\mathrm{max}$.
\end{lemma}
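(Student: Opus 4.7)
The plan is to reduce the eigenvalue problem for $\beta$ to an explicit low-degree polynomial in $\alpha$, show by an imaginary-part argument that no root can be pure imaginary for the specified $(\breve\xi,\breve\lambda)$, and then upgrade pointwise non-imaginarity to a uniform gap via compactness.

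First I would block-partition $\beta = \bigl(\begin{smallmatrix} 0 & A \\ B & C \end{smallmatrix}\bigr)$ with diagonal $3\times 3$ blocks $A=\diag(\breve\lambda\hat\rho+\mu\breve\xi^2,\;\breve\lambda\hat\rho+\tilde\mu\breve\xi^2,\;\breve\lambda\hat\rho+\nu\breve\xi^2)$ and $B=\diag(\tilde\mu^{-1},\mu^{-1},\nu^{-1})$. For an eigenvector $(v_1,v_2)^T$ with eigenvalue $\alpha\ne 0$, eliminating $v_1=\alpha^{-1}Av_2$ gives $(\alpha^2 I_3-BA-\alpha C)v_2=0$. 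Because the third row and column of $C$ vanish, the system splits cleanly: the energy mode decouples to $\alpha^2=\hat\rho\breve\lambda/\nu+\breve\xi^2$, and the surviving $(u,v)$-block yields the quadratic in $p:=\alpha^2$
\[
\tilde\mu\mu\,p^2 - p\bigl[(\tilde\mu+\mu)\hat\rho\breve\lambda + 2\mu\tilde\mu\breve\xi^2\bigr] + (\hat\rho\breve\lambda+\mu\breve\xi^2)(\hat\rho\breve\lambda+\tilde\mu\breve\xi^2) = 0,
\]
where I would use the elementary identity $\tilde\mu^2+\mu^2-\tilde\eta^2=2\mu\tilde\mu$ (immediate from $\tilde\mu=2\mu+\eta$, $\tilde\eta=\mu+\eta$) to collapse the $\breve\xi^2$-coefficient into the displayed form.

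Next, since $\Re\alpha=0$ is equivalent to $p\in(-\infty,0]$, it is enough to exclude real nonpositive roots on the slice $\breve\lambda=i\breve\tau$, $\breve\tau=1-\breve\xi^2\ge 0$. For the energy mode, $\Im(\alpha^2)=\hat\rho\breve\tau/\nu$ vanishes only at $\breve\tau=0$, in which case $\alpha^2=\breve\xi^2=1>0$. For the $2\times 2$ block, imposing $p\in\mathbb{R}$ and reading off the imaginary part of the quadratic gives
\[
(\tilde\mu+\mu)\,\breve\tau\,\hat\rho\,(\breve\xi^2 - p) = 0,
\]
so either $\breve\tau=0$ (whence $\breve\xi^2=1$ and the real part collapses to $(p-1)^2=0$, forcing $p=1$), or $p=\breve\xi^2\ge 0$. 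A direct substitution of $p=\breve\xi^2$ into the full quadratic leaves the residue $-\breve\tau^2\hat\rho^2$, which again forces $\breve\tau=0$. Hence every real root is strictly positive, so $|\Re\alpha|>0$ everywhere on the parameter set.

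Uniformity would then follow by compactness. By Lemma \ref{profdecay}, $\hat\rho(x_1)$ remains in a fixed interval $[\rho_\mathrm{min},\rho_\mathrm{max}]\subset(0,\infty)$ uniformly in $x_1$ and in the shock strength $e_\sm\in[0,e_\mathrm{max}]$; together with $\breve\xi\in[-1,1]$ this yields a compact parameter set, and continuous dependence of $\sigma(\beta)$ on parameters promotes pointwise non-imaginarity to a uniform lower bound on $|\Re\alpha|$. The main obstacle is the casework in the $2\times 2$ block; once the identity $\tilde\mu^2+\mu^2-\tilde\eta^2=2\mu\tilde\mu$ is in hand, the remainder is routine. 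I note that the argument uses no property of the endstate beyond $\hat\rho>0$, so the limit $e_\sm=0$ is handled on the same footing as finite-amplitude shocks, consistent with the structural observation in Remark \ref{nokaw}.
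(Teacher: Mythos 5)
Your argument is correct; I checked the key computations and they all hold: the identity $\tilde\mu^2+\mu^2-\tilde\eta^2=2\mu\tilde\mu$, the imaginary part $(\tilde\mu+\mu)\breve\tau\hat\rho(\breve\xi^2-p)$ of the quadratic for real $p$, and the residue $-\breve\tau^2\hat\rho^2$ upon substituting $p=\breve\xi^2$. Your route differs from the paper's mainly in how the contradiction is extracted. The paper also eliminates half of the eigenvector, but in the opposite order: it posits a pure imaginary eigenvalue $-\mi\zeta$ from the outset, solves the first block row for $\vec{w}_2=-\mi\zeta\,\beta_{12}^{-1}\vec{w}_1$, and arrives at the condition that $\mi\hat\rho\breve\tau$ be an eigenvalue of an explicit \emph{real symmetric} matrix, namely $(\breve\xi^2+\zeta^2)\diag(\mu,\mu,\nu)$ plus $\tilde\eta\ge0$ times a rank-one positive semidefinite matrix; since this is positive definite for $(\breve\xi,\zeta)\ne(0,0)$, the contradiction is read off from parabolicity of the $(u,v,e)$ diffusion symbol alone (cf.\ Remark \ref{explanation}), and the argument generalizes verbatim to any symmetric positive-definite viscosity tensor. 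Your version instead computes the characteristic polynomial explicitly, as a product of a linear and a quadratic factor in $p=\alpha^2$, and excludes $p\in(-\infty,0]$ by a real/imaginary-part case analysis; this is more computational and tied to this particular $6\times6$ matrix, but it is entirely self-contained, produces the roots explicitly (e.g.\ the double root $p=1$ in the degenerate case $\breve\tau=0$, $\breve\xi^2=1$), and, as you observe, uses nothing about the profile beyond $\hat\rho>0$, so the endpoint $e_\sm=0$ is automatic. One small point you should make explicit: the Schur-complement elimination assumes $\alpha\ne0$, and $\alpha=0$ also has $\Re\alpha=0$, so it must be dispatched separately; this is immediate because the blocks $A$ and $B$ are invertible on the parameter slice (each diagonal entry of $A$ has modulus bounded below since $\breve\tau+\breve\xi^2=1$), or equivalently because the constant terms of your two polynomial factors are nonzero so $p=0$ is never a root. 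The concluding continuity/compactness step, using the uniform upper and lower bounds on $\hat\rho$ from Lemma \ref{profdecay}, is the same as in the paper.
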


\begin{proof}
By continuity/compactness, it is sufficient to verify that $\beta$ is hyperbolic for each $ \xi$, $ \lambda=\mi \tau$, $ \xi,  \tau \in \mathbb{R}$.  Suppose that $-\mi \zeta$ is an eigenvalue of $\beta$, or $(\beta+\mi \zeta)\vec{w}=0$, $\vec{w}=(\vec{w}_1, \vec{w}_2)^T$, $\vec{w}_j\in \mathbb{C}^3$.  First, observe that the diagonal $\beta_{12}$ block has eigenvalues $\mi\hat \rho  \tau$ plus $\mu  \xi^2, \tilde \mu  \xi^2, \nu \xi^2$, with modulus bounded by $(|\tau|+| \xi|^2)/C>0$, hence is invertible on the entire parameter range.
Thus, we may solve the $\vec{w}$ equation $ \beta_{12}\vec{w}_2+ \mi\zeta \vec{w}_1=0$ to obtain
$\vec{w}_2=-\mi \zeta \beta_{12}^{-1}\vec{w}_1$.  Substituting the above expression for $\vec{w}_2$ into the $\vec{w}_2$ equation yields
\[
0= \beta_{21}\vec{w}_1 + (\mi \zeta \mat{I}_3 + \beta_{22})\vec{w}_2 = \big(\beta_{21} + (\mi \zeta \mat{I}_3 + \beta_{22})(-\mi \zeta \beta_{12}^{-1})\big)\vec{w}_1,
\]
and thus, rearranging, we find $\det \Big( \beta_{12} + \zeta^2 \beta_{21}^{-1} - \mi \zeta \beta_{21}^{-1} \beta_{22} \Big)=0$, or
\begin{equation}
\label{quadsym}
\hat \rho \mi\tau \in \sigma
\begin{pmatrix}
\mu  \xi^2+\tilde \mu  \zeta^2 & 
 \xi  \zeta \tilde \eta  & 0\\
 \xi  \zeta \tilde \eta  & 
\tilde \mu  \xi^2+\mu  \zeta^2  & 0\\
0 & 0&  \nu ( \xi^2+ \zeta^2) \\
\end{pmatrix}.
\end{equation}
Expanding the symmetric matrix in the righthand side of \eqref{quadsym} as
\begin{equation}
\label{matsym}
\begin{pmatrix}
\mu( \xi^2+ \zeta^2)& 0&0\\
0& \mu( \xi^2+ \zeta^2)&0 \\
0 & 0 &\nu( \xi^2+ \zeta^2)\\
\end{pmatrix}
+
\tilde \eta \begin{pmatrix}
 \zeta^2&  \xi  \zeta & 0\\
 \xi  \zeta & 
 \xi^2 & 0\\
0 & 0&  0\\
\end{pmatrix},
\end{equation}
$\tilde \eta=\mu+\eta\ge 0$, we see that it is $\ge ( \xi^2+ \zeta^2)\min\{\mu,\nu\}$, hence positive definite for $| \xi,  \zeta|\ne 0$, which is a contradiction unless $ \zeta= \xi=0$.  But, $ \zeta= \xi=0$ would imply $ \tau=0$ by \eqref{quadsym}, contradicting $\tau=1- \xi^2=1$.
\end{proof}

\begin{remark}
\label{explanation}
The quadratic form \eqref{matsym} may be recognized as the Fourier symbol of the diffusion matrices for the compressible Navier--Stokes equations written in $(u, v, e)$ coordinates.  Thus, the argument above relies only on parabolicity in $(u,v,e)$ components of the original equations, an instance of a more general principle presented in \cite{GMWZ}.
\end{remark}

\section{Computation of the Mach number}
\label{mach}
A dimensionless quantity measuring shock strength is the {\it Mach number}  $M = \frac{u_\sm - s}{c_\sm}$ (for a left-moving shock), where $u_\sm$ is the downwind velocity, $s$ is the shock speed, and $c_\sm$ is the downwind sound speed.  This gives a convenient measure of comparison between standard versus rescaled coordinates \eqref{rescaled}.
Under our normalizations $s=0$, $u_\sm=1$, this becomes simply $M=c_\sm^{-1}=(\Gamma (1+\Gamma) e_\sm)^{-1/2}$ or, using $e_\sm=\frac{(\Gamma +2)(u_\sp-u_*)}{2\Gamma(\Gamma +1)}$, $M=\frac{\sqrt{2}}{\sqrt{(\Gamma +2)(u_\sp-u_*)}}$.  In the strong shock limit $u_\sp\to u_*$, $M\sim (u_\sp-u_*)^{-1/2}$; in the weak shock limit $u_\sp\to 1$,  $M \to 1$.  In Tables \ref{tbl:mach-numbers}--\ref{tbl:mach-numbers2}, we provide a list of $u_\sp$ values and their corresponding Mach numbers for the monatomic and diatomic cases, respectively.

\begin{table}
\begin{center}$
\begin{array}{|ccc|ccc|ccc|}
u_\sp & M & u_\sp/e_\sm & u_\sp & M & u_\sp/e_\sm & u_\sp & M & u_\sp/e_\sm \\
\hline
0.26 & 8.58 & 21.24 & 0.40 & 2.23 & 2.22 & 0.70 & 1.29 & 1.29 \\
0.27 & 6.10 & 11.13 & 0.45 & 1.94 & 1.87 & 0.75 & 1.22 & 1.25 \\
0.28 & 4.98 & 7.72 & 0.50 & 1.73 & 1.66 & 0.80 & 1.17 & 1.21 \\
0.29 & 4.32 & 6.01 & 0.55 & 1.58 & 1.53 & 0.85 & 1.12 & 1.18 \\
0.30 & 3.87 & 4.98 & 0.60 & 1.46 & 1.43 & 0.90 & 1.07 & 1.15 \\
0.35 & 2.74 & 2.91 & 0.65 & 1.37 & 1.35 & 0.95 & 1.03 & 1.13 \\
\end{array}$
\end{center}
\caption{A list of values of $u_\sp$ with their corresponding Mach numbers for the case of a monatomic gas ($\Gamma=2/3$).  We also include the values of $u_\sp/e_\sm$ as another way of describing the shock strength. Note that both $M\to\infty$ and $u_\sp/e_\sm\to\infty$ when $u_\sp\to 1/4$.}
\label{tbl:mach-numbers}
\end{table}

\begin{table}
\begin{center}$
\begin{array}{|ccc|ccc|ccc|ccc|}
u_\sp & M & u_\sp/e_\sm & u_\sp & M & u_\sp/e_\sm & u_\sp & M & u_\sp/e_\sm & u_\sp & M & u_\sp/e_\sm \\
\hline
0.167 & 50.00 & 233.80 & 0.25 & 3.16 & 1.40 & 0.50 & 1.58 & 0.70 & 0.75 & 1.20 & 0.60 \\
0.17 & 15.81 & 23.80 & 0.30 & 2.50 & 1.05 & 0.55 & 1.47 & 0.67 & 0.80 & 1.15 & 0.59 \\
0.18 & 7.91 & 6.30 & 0.35 & 2.13 & 0.89 & 0.60 & 1.39 & 0.65 & 0.85 & 1.10 & 0.58 \\
0.19 & 5.98 & 3.80 & 0.40 & 1.89 & 0.80 & 0.65 & 1.31 & 0.63 & 0.90 & 1.07 & 0.57 \\
0.20 & 5.00 & 2.80 & 0.45 & 1.71 & 0.74 & 0.70 & 1.25 & 0.61 & 0.95 & 1.03 & 0.57
\end{array}$
\end{center}
\caption{A list of values of $u_\sp$ with its corresponding Mach number $M$ for the case of a diatomic gas ($\Gamma=2/5$).  We also include the values of $u_\sp/e_\sm$ as another way of describing the shock strength. Note that both $M\to\infty$ and $u_\sp/e_\sm\to\infty$ when $u_\sp\to 1/6$.}
\label{tbl:mach-numbers2}
\end{table}
\section{Computational environment and effort}\label{s:repro}

\subsection{Environment}
All numerical computations were performed on one of two Mac Pro workstations.  One has 2 Quad-Core
Intel Xeon processors with a 2.8 GHz rated CPU and the other has two 6-Core Intel Xeon processors with a 2.4 GHz rated CPU.  Computations were carried out in the MATLAB-based package STABLAB \cite{STABLAB}, an open source package available in the GitHub repository, nonlinear-waves/stablab.  Further information is available on request.

\subsection{Effort}
{\it Profiles}:  Computation of profiles is fast.  It takes about 3 seconds to produce the 19-20 profiles used in each of the monatomic and diatomic batch jobs.  The high frequency bounds take roughly 4 minutes of computational time per $u_\sp$, with roughly 80 minutes of computational time for an entire batch of 19-20 profiles.  The low-frequency computations (``rib-roast'') only run on a single core (not optimized for multi-core processing) and take roughly 100 minutes for a typical case.  Weak shocks take considerably longer given the length of the domain, and so an entire batch of 19-20 profiles took around 50 hours.  Finally, an average Evans function contour takes between 1-3 minutes for 100 contour points.  For an entire batch which includes the adaptive mesh for the particularly winding contours, it takes roughly 88 hours of computational time for the 900 contours.

\end{document}